\DeclareMathOperator{\Cov}{Cov}
\DeclareMathOperator{\Ent}{Ent}
\DeclareMathOperator{\Expectation}{\mathbb E} 
\DeclareMathOperator{\Grad}{grad}
\DeclareMathOperator{\Hessian}{Hess}
\DeclareMathOperator{\Sign}{sign}
\newcommand{\Bspace}[1]{B_{#1}}
\newcommand{\Ccs}[1]{C_0\left(#1\right)}
\newcommand{\Cexp}[1]{C_0^{(\cosh-1)}\left(#1\right)}
\newcommand{\Cinfcs}[1]{C_0^\infty\left(#1\right)}
\newcommand{\Cinfp}[1]{C_{\mathrm{p}}^\infty\left(#1\right)}
\newcommand{\Lexp}[1]{L^{(\cosh-1)}\left(#1\right)}
\newcommand{\LlogL}[1]{L^{(\cosh-1)_*}\left(#1\right)}
\newcommand{\WCexp}[1]{C_0^{1,(\cosh-1)}\left( #1 \right)}
\newcommand{\Wexp}[1]{W^{1,(\cosh-1)}\left(#1\right)}
\newcommand{\WlogL}[1]{W^{1,(\cosh-1)_*}\left(#1\right)}
\newcommand{\absoluteval}[1]{\left|#1\right|}
\newcommand{\avalof}[1]{\absoluteval{#1}}
\newcommand{\avalsof}[1]{\absoluteval{#1}^2}
\newcommand{\bnabla}{{\vec\nabla}}
\newcommand{\covat}[3]{\Cov_{#1}\left(#2,#3\right)}
\newcommand{\derivby}[1]{\frac{d}{d{#1}}}
\newcommand{\entropyof}[1]{\Ent\left(#1\right)}
\newcommand{\euler}{\mathrm{e}}
\newcommand{\expectat}[2]{{\Expectation}_{#1}\left[#2\right]}
\newcommand{\expof}[1]{\exp\left(#1\right)}
\newcommand{\gaussdensity}{M}
\newcommand{\gaussint}[2]{\int{#1} \gaussdensity(#2) \ d#2 \ }
\newcommand{\hessianof}[1]{\Hessian #1}
\newcommand{\logof}[1]{\log\left(#1\right)}
\newcommand{\maxexp}[1]{{\mathcal E}\left(#1\right)}
\newcommand{\naturals}{\mathbb N}
\newcommand{\normat}[2]{\left\Vert#2\right\Vert_{#1}}
\newcommand{\normof}[1]{\left\Vert#1\right\Vert}
\newcommand{\partialdd}[1]{\frac{\partial^2}{\partial{#1^2}}}
\newcommand{\partiald}[1]{\frac{\partial}{\partial{#1}}}
\newcommand{\probabilities}{\mathcal P}
\newcommand{\reals}{\mathbb R}
\newcommand{\scalarat}[3]{\left\langle#2,#3\right\rangle_{#1}}
\newcommand{\scalarof}[2]{\left\langle#1,#2\right\rangle}
\newcommand{\sdomainat}[1]{\mathcal S_{#1}}
\newcommand{\sdomain}[1]{\mathcal S_{#1}}
\newcommand{\setof}[2]{\left\{#1 \middle| #2 \right\}}
\newcommand{\signof}[1]{\Sign\left(#1\right)}
\newcommand{\transpose}{*}
\begin{document}
\mainmatter              
\title{Information Geometry of the Gaussian Space}
\titlerunning{Information Geometry of the Gaussian Space}  
%
\author{Giovanni Pistone}
\authorrunning{G. Pistone} 
%
\tocauthor{Giovanni Pistone}
\institute{de Castro Statistics, Collegio Carlo Alberto, Piazza Vincenzo Arbarello 8, 10122 Torino, Italy,\\
\email{giovanni.pistone@carloalberto.org},\\ WWW home page:
\texttt{http://www.giannidiorestino.it}}

\maketitle              

\begin{abstract}
  We discuss the Pistone-Sempi exponential manifold on the finite-dimensional Gaussian space. We consider the role of the entropy, the continuity of translations, Poincar\'e-type inequalities, the generalized differentiability of probability densities of the Gaussian space.
  
\keywords{Information Geometry, Pistone-Sempi Exponential Manifold, Gaussian Orlicz space, Gaussian Orlicz-Sobolev space}
\end{abstract}

\section{Introduction}

The Information Geometry (IG) set-up based on exponential Orlicz spaces \cite{MR1370295}, as further developed in \cite{MR1704564,MR1628177,MR2396032,MR3130268,MR3126029,MR3474821}, has reached a satisfying consistency, but has a basic defect. In fact, it is unable to deal with the structure of the measure space on which probability densities are defined. When the basic space is $\reals^n$ one would like to discuss for example transformation models as sub-manifold of the exponential manifold, which is impossible without some theory about the effect of transformation of the state space on the relevant Orlicz spaces. Another example of interest are evolution equations for densities, such as the Fokker-Planck equation, which are difficult to discuss in this set-up without considering Gaussian Orlicz-Sobolev spaces. See an example of such type of applications in \cite{MR1693600,brigo|pistone:2017CIG,arXiv:1603.04348v2}.

In \cite{MR3365132} the idea of an exponential manifold in a Gaussian space has been introduced and the idea is applied to the study of the spatially homogeneous Boltzmann equation. In the second part of that paper, it is suggested that the Gaussian space allows to consider Orlicz-Sobolev spaces with Gaussian weight of \cite[Ch. II]{MR724434} as a set-up for exponential manifolds.

In Sec. \ref{sec:gaussian-space} we discuss some properties of the Gauss-Orlicz spaces. Most results are quite standard, but are developed in some detail because to the best of our knowledge the case of interest is not treated in standard treatises. Notable examples and Poincar\'e-type inequalities are considered in Sec. \ref{sec:ineq-motiv}.  

The properties of the exponential manifold in the Gaussian case that are related with the smoothness of translation and the existence of mollifiers are presented in Sec. \ref{sec:expon-manif-gauss}. A short part of this section is based on the conference paper \cite{pistone:2017-GSI2017}. Gaussian Orlicz-Sobolev space are presented in Sec. \ref{sec:orlicz-sobolev}. Only basic notions on Sobolev's spaces are used here, mainly using the presentation by Haim Brezis \cite[Ch. 8--9]{MR2759829}.

Part of the results presented here were announced in an invited talk at the conference IGAIA IV Information Geometry and its Applications IV, June 12-17, 2016, Liblice, Czech Republic.

\section{Orlicz spaces with Gaussian Weight}
\label{sec:gaussian-space}

All along this paper, the sample space is the real Borel space $(\reals^n,\mathcal B)$ and $M$ denotes the standard $n$-dimensional Gaussian density ($M$ because of J.C. Maxwell!),

\begin{equation*}
  M(x)=(2\pi)^{-n/2}\exp\left(-\frac{1}{2}\avalof x ^2\right), \qquad x \in \reals^{n}\ .
\end{equation*}

\subsection{Generalities}

First, we review basic facts about Orlicz spaces. Our reference on Orlicz space is J. Musielak monograph \cite[Ch. II]{MR724434}.

On the probability space $(\reals^n,\mathcal B,M)$, called here the \emph{Gaussian space}, the couple of Young functions $(\cosh - 1)$ and its conjugate $(\cosh-1)_*$ are associated with the Orlicz space $\Lexp M$ and $\LlogL M$, respectively.

The space $\Lexp M$ is called \emph{exponential space} and is the vector space of all functions such that $\gaussint {(\cosh-1)(\alpha f(x))} x < \infty$ for some $\alpha > 0$. This is the same as saying that the moment generating function $t \mapsto \gaussint {\euler^{tf(x)}} x$ is finite on a open interval containing 0.

If $x,y \ge 0$, we have $(\cosh-1)'(x)= \sinh(x)$, $(\cosh-1)'_*(y)=\sinh^{-1}(y) = \logof{y + \sqrt{1+y^2}}$, $(\cosh-1)_*(y) = \int_0^y \sinh^{-1}(t) \ dt$. The Fenchel-Young inequality is
\begin{equation*}
  xy \le (\cosh-1)(x) + (\cosh-1)_*(y) = \int_0^x \sinh(s) \ ds + \int_0^y \sinh^{-1}(t) \ dt
\end{equation*}
and
\begin{align*}
  (\cosh-1)(x) &= x \sinh(x) - (\cosh-1)_*(\sinh(x)) \ ; \\
(\cosh-1)_*(y) &= y \sinh^{-1}(y) - (\cosh-1)(\sinh^{-1}(y)) \\
&= y \logof{y+(1+y^2)^{1/2}} - (1+y^2)^{-1/2} \ .
\end{align*}

The conjugate Young function $(\cosh-1)_*$ is associated with the \emph{mixture space} $\LlogL M$. In this case, we have the inequality
\begin{equation}\label{eq:delta2}
(\cosh-1)_*(ay) \le C(a) (\cosh-1)_*(y), \quad C(a) = \max(\avalof a,a^2) \ .  
\end{equation}
In fact
\begin{multline*}
  (\cosh-1)_*(ay) = \int_0^{ay} \frac{ay - t}{\sqrt{1+t^2}} \ dt = \\ a^2 \int_0^y \frac{y-s}{\sqrt{1+a^2s^2}} \ ds = a \int_0^y \frac{y-s}{\sqrt{\frac1{a^2}+s^2}} \ ds\ .
\end{multline*}
The inequality \eqref{eq:delta2} follows easily by considering the two cases $a > 1$ and $a < 1$. As a consequence, $g \in \LlogL M$ if, and only if, $\gaussint {(\cosh-1)_*(g(y))} y < \infty$.

In the theory of Orlicz spaces, the existence of a bound of the type \eqref{eq:delta2} is called $\Delta_2$-property, and it is quite relevant. In our case, it implies that the mixture space $\LlogL M$ is the dual space of its conjugate, the exponential space $\Lexp M$. Moreover, a separating sub-vector space e.g., $\Cinfcs{\reals^n}$ is norm-dense.

In the definition of the associated spaces, the couple $(\cosh - 1)$ and $(\cosh-1)_*$ is equivalent to the couple defined for $x,y > 0$ by $\Phi(x)=\euler^x-1-x$ and $\Psi(y) = (1+y)\logof{1+y} - y$. In fact, for $t > 0$ we have $\logof{1+t} \le \logof{y+\sqrt{1+t^2}}$ and 
\begin{equation*}
  \logof{t+\sqrt{1+t^2}} \le \logof{t+\sqrt{1+2t+t^2}} = \logof{1+2t} \ , 
\end{equation*}
so that we derive by integration the inequality
\begin{equation*}
  \Psi(y) \le (\cosh-1)_*(y) \le \frac12 \Psi(2y) \ .
\end{equation*}
In turn, conjugation gives
\begin{equation*}
  \frac12 \Phi(x) \le (\cosh-1)(x) \le \Phi(x) \ .
\end{equation*}

The \emph{exponential space} $\Lexp M$ and the \emph{mixture space} $\LlogL M$ are the spaces of real functions on $\reals^n$ respectively defined using the conjugate Young functions $\cosh-1$ and $(\cosh -1)_*$. The exponential space and the mixture space are given norms by defining the closed unit balls of $\Lexp M$ and $\LlogL M$, respectively, by
\begin{equation*}
  \setof{f}{\int (\cosh-1)(f(x)) \ M(x)dx \le 1}, \quad \setof{g}{\int (\cosh-1)_*(g(x)) \ M(x)dx \le 1} \ .
\end{equation*}
Such a norm is called Luxemburg norm.

The Fenchel-Young inequality
\begin{equation*}
  xy \le (\cosh-1)(x) + (\cosh-1)_*(y)
\end{equation*}
implies that $(f,g) \mapsto \expectat M {fg}$ is a separating duality, precisely
\begin{equation*}
  \avalof{\gaussint {f(x)g(x)} x } \le 2 \normat {\Lexp M} f \normat {\LlogL M} g \ .
\end{equation*}

A random variable $g$ has norm $\normat{\LlogL M} g$ bounded by $\rho$ if, and only if, $\normat{\LlogL M}{g/\rho} \le 1$, that is $\expectat M {(\cosh-1)_*(g/\rho)} \le 1$, which in turn implies $\expectat M {(\cosh-1)_*(\alpha g)} = \expectat M {(\cosh-1)_*(\alpha\rho(g/\rho))}\le \rho\alpha$ for all $\alpha\ge 0$. This is not true for the exponential space $\Lexp M$.

It is possible to define a dual norm, called Orlicz norm, on the exponential space, as follows. We have $\normat{(\LlogL M)^*} f \le 1$ if, and only if $\avalof{\gaussint {f(x)g(x)} x} \le 1$ for all $g$ such that $\gaussint {(\cosh-1)_*(g(x))} x \le 1$. With this norm, we have
\begin{equation}
  \label{eq:orlicznorm}
  \avalof{ \gaussint {f(x)g(x)} x } \le \normat {(\LlogL M)^*} {f} \normat {\LlogL M} {g} 
\end{equation}

The Orlicz norm and the Luxemburg norm are equivalent, precisely,
\begin{equation*}
  \normat{\Lexp M} f \le \normat{\LlogL M^*} f \le 2\normat{\Lexp M} f \ .
\end{equation*}

\subsection{Entropy}

The use of the exponential space is justified by the fact that for every 1-dimensional exponential family $I \ni \theta \mapsto p(\theta) \propto \euler^{\theta V}$, $I$ neighborhood of 0, the sufficient statistics $V$ belongs to the exponential space. The statistical interest of the mixture space resides in its relation with entropy. 

If $f$ is a positive density of the Gaussian space, $\gaussint {f(x)} x = 1$, we define its entropy to be $\entropyof f = - \gaussint {f(x)\log f(x)} x$. As $x\log x \ge x-1$, the integral is well defined. It holds
\begin{equation}\label{eq:logplus}
 - \gaussint {f(x) \log^+ f(x)} x \le \entropyof f \le \euler^{-1} - \gaussint {f(x) \log^+ f(x)} x \ ,
\end{equation}
where $\log^+$ is the positive part of $\log$.

\begin{proposition} \label{prop:LlogL-entropy}
A positive density $f$ of the Gaussian space has finite entropy if, and only if, $f$ belongs to the mixture space $\LlogL M$.   
\end{proposition}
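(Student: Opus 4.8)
The plan is to collapse both conditions to the single integrability requirement $\gaussint{f(x)\log^+f(x)}{x}<\infty$ and then compare. On the entropy side, inequality \eqref{eq:logplus} already sandwiches $\entropyof f$ between $-\gaussint{f(x)\log^+f(x)}{x}$ and $\euler^{-1}-\gaussint{f(x)\log^+f(x)}{x}$; this rests only on the elementary fact that $t\mapsto t\logof{1/t}$ is bounded by $\euler^{-1}$ on $(0,1]$, so the negative part of $f\log f$ is $M$-integrable with integral in $[0,\euler^{-1}]$. Hence $\entropyof f$ is finite if and only if $\gaussint{f(x)\log^+f(x)}{x}<\infty$. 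On the Orlicz side, the $\Delta_2$-bound \eqref{eq:delta2} — already used in the text to identify $\LlogL M$ with the modular set $\setof{g}{\gaussint{(\cosh-1)_*(g(x))}{x}<\infty}$ — reduces the statement $f\in\LlogL M$ to finiteness of $\gaussint{(\cosh-1)_*(f(x))}{x}$.

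So the proposition comes down to the pointwise comparison of $(\cosh-1)_*$ with $y\mapsto y\log^+y$. I would establish constants $c_1,\dots,c_4>0$ with $c_1\,y\log^+y-c_2\,y\le(\cosh-1)_*(y)\le c_3\,y\log^+y+c_4\,y$ for all $y\ge0$, integrate this against $M$, and use $\gaussint{f(x)}{x}=1$ to absorb the linear terms; the result is that $\gaussint{(\cosh-1)_*(f(x))}{x}<\infty$ exactly when $\gaussint{f(x)\log^+f(x)}{x}<\infty$, which together with the two reductions above proves the claim. The pointwise bounds themselves follow from the equivalence $\Psi(y)\le(\cosh-1)_*(y)\le\tfrac12\Psi(2y)$ of the excerpt, with $\Psi(y)=(1+y)\logof{1+y}-y$, plus elementary estimates: on $[0,1]$, $\logof{1+y}\le y$ gives $\Psi(y)\le y$; for $y\ge1$, $\logof y\le\logof{1+y}\le\logof2+\logof y$ gives $y\log^+y-y\le\Psi(y)\le2\left(y\log^+y+y\logof2\right)$; and $\log^+(2y)\le\log^+y+\logof2$ converts $\tfrac12\Psi(2y)$ into a bound of the asserted form.

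There is no hard core to this argument. The only place calling for care is the case split $y\le1$ versus $y\ge1$ in the comparison $(\cosh-1)_*(y)\asymp y\log^+y$ (up to a linear correction), together with the observation — genuinely used — that the linear corrections are harmless precisely because $f$ is an $M$-probability density, so $\gaussint{f(x)}{x}=1$. It is also worth noting that the reduction of $f\in\LlogL M$ to the modular condition really does rely on the $\Delta_2$-property \eqref{eq:delta2}: without it the Orlicz class would in general be strictly larger than $\setof{g}{\gaussint{(\cosh-1)_*(g(x))}{x}<\infty}$ and the stated equivalence would fail.
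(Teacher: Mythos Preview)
Your argument is correct and follows essentially the same route as the paper: both proofs reduce the entropy side to $\gaussint{f(x)\log^+f(x)}{x}<\infty$ via \eqref{eq:logplus}, reduce the Orlicz side to $\gaussint{(\cosh-1)_*(f(x))}{x}<\infty$ via the $\Delta_2$-property, and then compare $(\cosh-1)_*(y)$ with $y\log^+y$ up to linear terms absorbed by $\gaussint{f(x)}{x}=1$. The only cosmetic difference is that the paper obtains the pointwise comparison by integrating the bounds $\log 2+\log x\le\sinh^{-1}(x)\le\logof{1+\sqrt 2}+\log x$ on $[1,\infty)$, whereas you route through the already stated equivalence $\Psi(y)\le(\cosh-1)_*(y)\le\tfrac12\Psi(2y)$; both lead to the same two-sided estimate.
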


\begin{proof}
  We use Eq. \eqref{eq:logplus} in order to show the equivalence. For $x \ge 1$ it holds
  \begin{equation*}
    2x \le x + \sqrt{1+x^2} \le (1+ \sqrt 2) x \ .
  \end{equation*}

It follows
  \begin{equation*}
 \log 2 + \log x \le \logof{x + \sqrt{1+x^2}} = \sinh^{-1}(x) \le \logof{1+ \sqrt 2} + \log x \ ,
  \end{equation*}
and, taking the integral $\int_1^y$ with $y \ge 1$, we get
\begin{multline*}
  \log 2 (y -1) + y \log y - y + 1 \le \\
(\cosh-1)_*(y) - (\cosh-1)_*(1) \le \\
\logof{1+ \sqrt 2}(y-1) + y \log y - y + 1 \ ,
\end{multline*}
then, substituting $y>1$ with $\max(1,f(x))$, $f(x) > 0$,
\begin{multline*}
  (\log 2 - 1)(f(x)-1)^+ + f(x) \log^+ f(x) \le \\
(\cosh-1)_*(\max(1,f(x))) - (\cosh-1)_*(1) \le \\
(\logof{1+ \sqrt 2}-1)(f(x)-1)^+ + f(x) \log^+ f(x) \ .
\end{multline*}
By taking the Gaussian integral, we have
\begin{multline*}
  (\log 2 - 1) \gaussint{(f(x)-1)^+} x + \gaussint{f(x) \log^+ f(x)} x \le \\
\gaussint{(\cosh-1)_*(\max(1,f(x)))}x - (\cosh-1)_*(1) \le \\
(\logof{1+ \sqrt 2}-1)\gaussint{(f(x)-1)^+}x + \gaussint{f(x) \log^+ f(x)} x \ ,
\end{multline*}
which in turn implies the statement because $f \in L^1(M)$ and
\begin{multline*}
  \gaussint {(\cosh-1)_*(f(x))} x  + (\cosh-1)_*(1) = \\ \gaussint {(\cosh-1)_*(\max(1,f(x)))} x + \gaussint {(\cosh-1)_*(\min(1,f(x)))} x \ .
\end{multline*}
\qed
\end{proof}

Of course, this proof does not depend on the Gaussian assumption.

\subsection{Orlicz and Lebesgue spaces}

We discuss now the relations between the exponential space, the mixture space, and the Lebesgue spaces. This provides a first list of classes of functions that belong to the exponential space or to the mixture space. The first item in the proposition holds for a general base probability measure, while the other is proved in the Gaussian case.
 
\begin{proposition}\label{prop:O-inclusions} Let $1 < a < \infty$.
\begin{enumerate} 
\item       \begin{equation*}
        L^\infty(M) \hookrightarrow \Lexp M \hookrightarrow L^a(M) \hookrightarrow \LlogL M \hookrightarrow L^1(M) \ .
      \end{equation*}
    \item If $\Omega_R = \setof{x \in \reals^n}{\absoluteval x < R}$, the restriction operator is defined and continuous in the cases 
      \begin{equation*}
        \Lexp M \rightarrow L^a(\Omega_R), \quad  \LlogL M \rightarrow L^1(\Omega_R)
      \end{equation*}
    \end{enumerate}
  \end{proposition}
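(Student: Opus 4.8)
The plan is to prove the two items separately, both by soft arguments. For item (1) I would invoke the standard comparison principle for Orlicz spaces over a \emph{probability} measure: if two Young functions satisfy $A(x)\le B(cx)$ for all sufficiently large $x$ (some $c>0$), then there is a continuous embedding $L^B(M)\hookrightarrow L^A(M)$ — the inclusion of sets being immediate from the moment characterizations (a polynomial/logarithmic bound versus the relevant exponential integral), and its boundedness then following from the closed graph theorem, or from a direct Luxemburg-norm estimate in which the discrepancy of $A$ and $B$ on an initial bounded interval is absorbed by $M(\reals^n)=1$. Each of the four links is then a one-line growth comparison: $L^\infty(M)\hookrightarrow\Lexp M$ because $(\cosh-1)(\normat\infty f/\lambda)\le 1$ once $\lambda$ is a fixed multiple of $\normat\infty f$; $\Lexp M\hookrightarrow L^a(M)$ because $x^a\le(\cosh-1)(x)$ for $x$ large, exponential growth dominating every polynomial; $L^a(M)\hookrightarrow\LlogL M$ because, using the equivalence $(\cosh-1)_*(y)\le\frac12\Psi(2y)$ recorded above with $\Psi(y)=(1+y)\logof{1+y}-y=O(y\log y)$, one has $(\cosh-1)_*(y)=o(y^a)$ for every $a>1$; and $\LlogL M\hookrightarrow L^1(M)$ because $\Psi(y)\le(\cosh-1)_*(y)$ and $\Psi(y)/y\to\infty$, so $\LlogL M$ lies in $L^1(M)$ exactly as every Orlicz space over a probability measure does.

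For item (2), on the ball $\Omega_R=\setof{x}{\avalof x<R}$ the Gaussian density satisfies $(2\pi)^{-n/2}\euler^{-R^2/2}\le M(x)\le(2\pi)^{-n/2}$, hence $\mathbf 1_{\Omega_R}\,dx\le(2\pi)^{n/2}\euler^{R^2/2}\,\mathbf 1_{\Omega_R}\,M(x)\,dx$ as measures. Therefore $\normat{L^a(\Omega_R)}{f}\le\left((2\pi)^{n/2}\euler^{R^2/2}\right)^{1/a}\normat{L^a(M)}{f}$, and with $a=1$, $\normat{L^1(\Omega_R)}{f}\le(2\pi)^{n/2}\euler^{R^2/2}\,\normat{L^1(M)}{f}$. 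Composing these with the inclusions $\Lexp M\hookrightarrow L^a(M)$ and $\LlogL M\hookrightarrow L^1(M)$ from item (1) produces the two asserted restriction operators, continuous with the displayed constants.

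The statement is essentially routine, so there is no single hard obstacle; the two points that genuinely require attention are the following. First, for Orlicz spaces over a probability measure it is enough to compare the Young functions \emph{near infinity} — this is precisely what allows $\Lexp M\hookrightarrow L^a(M)$ to hold for every finite $a$ even though $x^a$ exceeds $(\cosh-1)(x)$ on an initial interval. Second, in item (2) the target is the \emph{Lebesgue} space $L^a(\Omega_R,dx)$ rather than the weighted space $L^a(\Omega_R,M\,dx)$, so one genuinely needs the \emph{lower} bound $M\ge(2\pi)^{-n/2}\euler^{-R^2/2}$ on $\Omega_R$; this is the only place where the Gaussian hypothesis, as opposed to an arbitrary base probability, enters.
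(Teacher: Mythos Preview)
Your proof is correct and, for item~(1), essentially coincides with the paper's approach: the paper simply cites Musielak for the standard Orlicz comparison principle over a probability space, which is exactly the principle you spell out.

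For item~(2) the two arguments differ. The paper argues directly: it picks an even integer $2n\ge a$, uses the single Taylor term $(\cosh-1)(u)\ge u^{2n}/(2n)!$ together with the lower bound $M(x)\ge(2\pi)^{-n/2}\euler^{-R^2/2}$ on $\Omega_R$, and obtains an explicit bound for $\normat{L^{2n}(\Omega_R)}{f}$ in terms of $\normat{\Lexp M}{f}$; the case of general $a$ and the second map $\LlogL M\to L^1(\Omega_R)$ are then implicit. Your route instead factors through item~(1): you first pass $\Lexp M\hookrightarrow L^a(M)$ (respectively $\LlogL M\hookrightarrow L^1(M)$) and then use the measure comparison $\mathbf 1_{\Omega_R}\,dx\le (2\pi)^{n/2}\euler^{R^2/2}\,\mathbf 1_{\Omega_R}\,M\,dx$. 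This is slightly less self-contained but covers both restriction operators and all $1<a<\infty$ uniformly, and makes the role of the Gaussian lower bound transparent. Either approach is fine; yours is arguably the cleaner organization.
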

    
\begin{proof}\begin{enumerate}
  \item See \cite[Ch. II]{MR724434}.
\item For all integers $n \ge 1$,
  \begin{multline*}
    1 \ge \int (\cosh-1)\left(\frac{f(x)}{\normat{\Lexp M}{f}}\right) M(x) \ dx \ge \\ \int_{\Omega_R} \frac1{(2n)!} \left(\frac{f(x)}{\normat{\Lexp M}{f}}\right)^{2n} M(x) \ dx \ge \\ \frac{(2\pi)^{-n/2}\euler^{-R^2/2}}{(2n)!\normat{\Lexp M}{f}} \int_{\Omega_R} \left(f(x)\right)^{2n} \ dx.  
  \end{multline*} \qed
\end{enumerate}

\end{proof}

\section{Notable bounds and examples}
\label{sec:ineq-motiv}

There is a large body of literature about the analysis of the Gaussian space $L^2(M)$. In order to motivate our own construction and to connect it up, in this section we have collected some results about notable classes of functions that belongs to the exponential space $\Lexp M$ or to the mixture space $\LlogL M$. Some of the examples will be used in the applications of Orlicz-Sobolev spaces in the Information Geometry of the Gaussian space. Basic references on the analysis of the Gaussian space are \cite[V.1.5]{MR1335234}, \cite[4.2.1]{MR2410225}, and \cite[Ch. 1]{MR2962301}.

\subsection{Polynomial bounds}

The exponential space $\Lexp M$ contains all functions $f \in C^2(\reals^n;\reals)$ whose Hessian is uniformly dominated by a constant symmetric matrix.  In such a case, $f(x) = f(0) + \nabla f(0) x + \frac12 x^\transpose \hessianof{f(\bar x)} x$, with $x^\transpose \hessianof{f(y)} x \le \lambda \absoluteval{x}^2$, $y \in \reals^n$, and $\lambda \ge 0$ being the largest non-negative eigen-value of the dominating matrix. Then for all real $\alpha$,
 \begin{equation*}
 \int_{\reals^{n}} \euler^{\alpha f(x)} M(x) \ dx < \dfrac{1}{(2\pi)^{n/2}} \int_{\reals^{n}}  \euler^{\alpha f(0) + \nabla f(0)  x + \frac12 (\alpha \lambda -1)\absoluteval x^2} \ dx   
  \end{equation*}
and the RHS is finite for $\alpha < \lambda^{-1}$. In particular, $\Lexp M$ contains all polynomials with degree up to 2.

An interesting simple application of the same argument is the following. Assume $p = \euler^v$ is a positive density on the Gaussian space such that 
\begin{equation*}
  \euler^{A_1(x)} \le \euler^{v(x)} \le \euler^{A_2(x)}, \quad x \in \reals^n \ ,
\end{equation*}
for suitable second order polynomials $A_1$, $A_2$. Then $v \in \Lexp M$. Inequalities of this type appear in the theory of parabolic equations e.g., see \cite[Ch. 4]{MR2410225}.

The mixture space $\LlogL M$ contains all random variables $f \colon \reals^d \to \reals$ which are bounded by a polynomial, in particular, all polynomials. In fact, all polynomials belong to $L^{2}(M) \subset \LlogL M$.

\subsection{Densities of exponential form}
In this paper, we are specially interested in densities of the Gaussian space of the form $f = \euler^v$, that is $\gaussint {\euler^{v(x)}} x = 1$. Let us now consider simple properties of the mappings $f \mapsto v = \log f$ and $v \mapsto f=\euler^v$.

We have seen in Prop. \ref{prop:LlogL-entropy} that $f = \euler^v \in \LlogL M$ if, and only if,
\begin{equation*}
  - \entropyof {\euler^v} = \gaussint {\euler^{v(x)} v(x)} x < \infty \ .
\end{equation*}

As $\lim_{x\to+\infty} \frac{\cosh(x)}{x\euler^x} = 0$, we do not expect $v \in \Lexp M$ to imply $f = \euler^v \in \LlogL M$. 

As $(\cosh-1)(\alpha \log y) = (y^\alpha+y^{-\alpha})/2-1$, $\alpha > 0$, then $v  = \log f \in \Lexp M$ if, and only if, both $f^\alpha$ and $f^{-\alpha}$ both belong to $L^1(M)$ for some $\alpha > 0$. In the case $\normat {\Lexp M} v < 1$, then we can take $\alpha > 1$ and $f \in L^\alpha(M) \subset \LlogL M$. In conclusion, $\exp \colon v \mapsto \euler^v$ maps the open unit ball of $\Lexp M$ into $\cup_{\alpha>1} L^\alpha(M) \subset \LlogL M$.

This issue is discussed in the next Sec. \ref{sec:expon-manif-gauss}.

\subsection{Poincar\'e-type inequalities}
Let us denote by $C^k_b(\reals^n)$ the space of functions with derivatives up to order $k$, each bounded by a constant. We write $C^k_p(\reals^n)$ if all the derivative are bounded by a polynomial. We discuss below inequalities related to the classical Gaussian Poincar\'e inequality, which reads, in the 1-dimensional case,
\begin{equation}\label{eq:gauss-poincare}
\gaussint {\left(f(x) - \gaussint {f(y)} y \right)^2} x \le \gaussint {\avalof{f'(x)}^2} x \ ,
\end{equation}
for all $f \in C^1_p(\reals^n)$. We are going to use the same techniques used in the classical proof of \eqref{eq:gauss-poincare} e.g., see \cite{MR2962301}.

If $X$, $Y$ are independent standard Gaussian variables, then
\begin{equation*}
  X' = \euler^{-t}+\sqrt{1-\euler^{-2t}}Y, \quad Y'=\sqrt{1-\euler^{-2t}}X-\euler^{-t}Y
\end{equation*}
are independent standard Gaussian random variables for all $t \ge 0$. Because of that, it is useful to define Ornstein-Uhlenbeck semi-group by the Mehler formula
\begin{equation}\label{eq:OUsemigroup}
  P_tf(x) = \gaussint {f(\euler^{-t}x + \sqrt{1-\euler^{-2t}}y)} y, \quad t \ge 0,\quad   f \in C_p(\reals^n) \ .
\end{equation}
For any convex function $\Phi$, Jensen's inequality gives
\begin{multline*}
  \gaussint {\Phi(P_tf(x))} x \le \\ \gaussint {\gaussint {\Phi(f(\euler^{-t}x + \sqrt{1-\euler^{-2t}}y))} y } x = \\ 
  \gaussint {\Phi(f(x))} x \ .
\end{multline*}
In particular, this shows that, for all $t \ge 0$, $f \mapsto P_tf$ is a contraction for the norm of both the mixture space $\LlogL M$ and the exponential space $\Lexp M$.

Moreover, if $f \in C^1_p(\reals^n)$, we have
\begin{align}
  f(&x) - \gaussint {f(y)} y \notag \\
       &= P_0(x) - P_\infty f(x) \notag \\
       &= - \int_0^\infty \derivby t P_tf(x) \ dt \notag \\
       &= \int_0^\infty \gaussint {\nabla f(\euler^{-t}x + \sqrt{1-\euler^{-2t}}y) \cdot \left(\euler^{-t} x - \frac{\euler^{-2t}}{\sqrt{1-\euler^{-2t}}}y\right)} y \ dt \label{eq:OU-equality} \\
       &\le \int_0^\infty \frac{\euler^{-t}}{\sqrt{1-\euler^{-2t}}} \ dt \quad \times \notag \\ &\phantom{\frac{\euler^{-t}}{\sqrt{1-\euler^{-2t}}}}\gaussint {\avalof{\nabla f(\euler^{-t}x + \sqrt{1-\euler^{-2t}}y)}\avalof{\sqrt{1-\euler^{-2t}} x - \euler^{-t} y}} y \ . \label{eq:OU-inequality}
\end{align}
Note that
\begin{equation*}
  \int_0^\infty \frac{\euler^{-t}}{\sqrt{1-\euler^{-2t}}} \ dt = \int_0^1 \frac{ds}{\sqrt{1-s^2}} = \frac\pi2 \ .
\end{equation*}
We use this remark and \eqref{eq:OU-inequality} to prove our first inequality.

\begin{proposition}\label{prop:poincare-mixture}
If $f \in C^1_p(\reals^n)$ and $\lambda > 0$ is such that
\begin{equation}\label{eq:lambda}
  C\left(\lambda\frac\pi2\right) \gaussint {C(\avalof y)} y = 1 \ , \quad C(a) = \max(\avalof a,a^2) \ ,
\end{equation}
then  
  \begin{multline*}
  \int  (\cosh-1)_*\left(\lambda\left(f(x) - \int f(y) M(y) \ dy\right)\right) M(x) \ dx \le \\  \int (\cosh-1)_*(\avalof{\nabla f(x)}) M(x) \ dx \ ,
  \end{multline*}
that is
\begin{equation*}
  \normat {\LlogL M} {f - \int f(y) M(y) \ dy} \le \lambda^{-1} \normat {\LlogL M}{\avalof{\nabla f}} \ .
\end{equation*}
\end{proposition}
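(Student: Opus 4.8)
The plan is to carry the semigroup computation already set up in \eqref{eq:OU-equality}--\eqref{eq:OU-inequality} through the convex function $(\cosh-1)_*$, exactly as the classical argument carries it through the square. Write $\nu(dt)=\frac2\pi\,\frac{\euler^{-t}}{\sqrt{1-\euler^{-2t}}}\,dt$, which is a probability measure on $(0,\infty)$ by the remark $\int_0^\infty\frac{\euler^{-t}}{\sqrt{1-\euler^{-2t}}}\,dt=\frac\pi2$ preceding the statement, and put $u_t(x,y)=\euler^{-t}x+\sqrt{1-\euler^{-2t}}\,y$, $v_t(x,y)=\sqrt{1-\euler^{-2t}}\,x-\euler^{-t}y$, so that \eqref{eq:OU-inequality} reads: $f(x)-\gaussint{f(y)}{y}$ is at most $\frac\pi2$ times the $\nu(dt)\,M(y)\,dy$-average of the nonnegative quantity $\avalof{\nabla f(u_t(x,y))}\,\avalof{v_t(x,y)}$. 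Applying the same bound to $-f$, for which $\avalof{\nabla(-f)}=\avalof{\nabla f}$, gives the identical bound for $-(f(x)-\gaussint{f(y)}{y})$, hence for $\avalof{f(x)-\gaussint{f(y)}{y}}$.

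I would then multiply by $\lambda$ and use that $(\cosh-1)_*$ is even, nondecreasing on $[0,\infty)$ and convex, so that Jensen's inequality gives, for each $x$,
\[ (\cosh-1)_*\!\left(\lambda\left(f(x)-\gaussint{f(y)}{y}\right)\right)\le\int_0^\infty\gaussint{(\cosh-1)_*\!\left(\frac\pi2\lambda\,\avalof{\nabla f(u_t(x,y))}\,\avalof{v_t(x,y)}\right)}{y}\,\nu(dt). \]
Integrating in $x$ against $M$ and using Tonelli, the decisive point is that for each fixed $t$ the pair $(u_t(x,y),v_t(x,y))$ is the image of $(x,y)$ under an orthogonal linear map of $\reals^{2n}$ --- precisely the change of variables recalled at the start of this subsection, and moreover an involution --- so it preserves $M(x)M(y)\,dx\,dy$; the inner integral thus equals $\int\int(\cosh-1)_*\!\left(\frac\pi2\lambda\,\avalof{\nabla f(x)}\,\avalof{y}\right)M(x)M(y)\,dx\,dy$, which no longer depends on $t$, so the $\nu$-average disappears and
\[ \int(\cosh-1)_*\!\left(\lambda\left(f(x)-\gaussint{f(y)}{y}\right)\right)M(x)\,dx\le\int\int(\cosh-1)_*\!\left(\frac\pi2\lambda\,\avalof{\nabla f(x)}\,\avalof{y}\right)M(x)M(y)\,dx\,dy. \]

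It remains to absorb the scalar factor. For fixed $x$, two applications of the $\Delta_2$-bound \eqref{eq:delta2} --- first with $a=\frac\pi2\lambda$, then with $a=\avalof y$ --- give
\[ (\cosh-1)_*\!\left(\frac\pi2\lambda\,\avalof y\,\avalof{\nabla f(x)}\right)\le C\!\left(\frac\pi2\lambda\right)C(\avalof y)\,(\cosh-1)_*(\avalof{\nabla f(x)}), \]
and integrating in $y$, the defining relation \eqref{eq:lambda}, namely $C(\lambda\frac\pi2)\gaussint{C(\avalof y)}{y}=1$, collapses the right-hand side to $(\cosh-1)_*(\avalof{\nabla f(x)})$; integrating in $x$ yields the asserted integral inequality. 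The Luxemburg-norm statement follows from the definition of the norm: applying the integral inequality to $f/\normat{\LlogL M}{\avalof{\nabla f}}$, and using that the $\Delta_2$-property makes the infimum defining the norm attained, gives $\normat{\LlogL M}{f-\gaussint{f(y)}{y}}\le\lambda^{-1}\normat{\LlogL M}{\avalof{\nabla f}}$ directly. The one genuinely substantive step is the orthogonality --- indeed involutivity --- of the Mehler change of variables, which is what makes the $(t,y)$-averaging collapse; everything else is routine, the only point needing care being the finiteness of the right-hand side (automatic, since $\avalof{\nabla f}$ is polynomially bounded and hence lies in $\LlogL M$), which legitimizes the use of Jensen and Tonelli.
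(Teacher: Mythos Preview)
Your argument is correct and follows essentially the same route as the paper: Jensen's inequality applied to the Ornstein--Uhlenbeck representation \eqref{eq:OU-inequality}, the $\Delta_2$-bound \eqref{eq:delta2} (you apply it twice, the paper equivalently proves and uses the sub-multiplicativity $C(ka)\le C(k)C(a)$), and the orthogonality of the Mehler change of variables to decouple $\nabla f$ from the remaining Gaussian factor. The only cosmetic difference is the order of operations---you integrate in $x$ before invoking $\Delta_2$, the paper does the reverse---and you are more explicit than the paper about the rotation invariance and about deducing the Luxemburg-norm bound from the modular inequality.
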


\begin{proof} Jensen's inequality applied to Eq. \eqref{eq:OU-inequality} gives
  \begin{multline}\label{eq:1}
    (\cosh-1)_*\left(\lambda\left(f(x) - \gaussint {f(y)} y \right)\right) \le  \int_0^\infty \frac2\pi \frac{\euler^{-t}}{\sqrt{1-\euler^{-2t}}} \ dt \ \times \\ \gaussint {(\cosh-1)_*\left(\lambda \frac\pi2 \avalof{\nabla f(\sqrt{1-\euler^{-2t}}x + \euler^{-t}y)}\avalof{\sqrt{1-\euler^{-2t}} x - \euler^{-t} y}\right)} y
  \end{multline}

Now we use  of the bound in Eq. \eqref{eq:delta2}, namely $(\cosh-1)_*(ay) \le C(a) (\cosh-1)_*(y)$ if $a >0$, where $C(a) = \max(\avalof a,a^2)$, and further bound for $a,k>0$
\begin{equation*}
C(ka) = ka \vee k^2a^2 \le kC(a) \vee k^2C(a) = C(k)C(a) \ ,  
\end{equation*}
to get
\begin{multline}\label{eq:2}
  (\cosh-1)_*\left(\lambda \frac\pi2 \avalof{\nabla f(\euler^{-t}x + \sqrt{1-\euler^{-2t}}y)}\avalof{\sqrt{1-\euler^{-2t}}x - \euler^{-t}y}\right) \le \\
    C\left(\lambda\frac\pi2\right) C\left(\avalof{\sqrt{1-\euler^{-2t}}x - \euler^{-t}y}\right) (\cosh-1)_*\left(\avalof{\nabla f(\euler^{-t}x + \sqrt{1-\euler^{-2t}}y)}\right)
 \ .
\end{multline}

Taking the expected value of both sides of the inequality resulting from \eqref{eq:1} and \eqref{eq:2}, we get
  \begin{multline*}
    \int  (\cosh-1)_*\left(\lambda\left(f(y) - \int f(x) M(x) \ dx\right)\right) M(y) \ dy \le \\
  C\left(\lambda\frac\pi2\right) \gaussint {C(\avalof{y})} y \gaussint {(\cosh-1)_*(\avalof{\nabla f(x)})} x \ ,
  \end{multline*}
We conclude by choosing a proper value of $\lambda$. \qed
\end{proof}

The same argument does not work in the exponential space. We have assume the boundedness of derivatives i.e., a Lipschitz assumption.

\begin{proposition}\label{prop:dispersion}
  If $f \in C^1_b(\reals^n)$ with $\sup\setof{\avalof{\nabla f(x)}}{x \in \reals^n} = m$ then
  \begin{equation*}
    \normat {\Lexp M} {f - \gaussint {f(y)} y} \le \frac {\pi}{2\sqrt{2\log 2}} m \ .
  \end{equation*}
\end{proposition}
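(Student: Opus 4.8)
\emph{Proof idea.} The plan is to run the scheme of the proof of Proposition~\ref{prop:poincare-mixture}, with one essential modification dictated by the fact that the Young function $(\cosh-1)$ fails the $\Delta_2$-condition: the Lipschitz bound $\left|\nabla f\right|\le m$ must be used \emph{before} invoking Jensen, so that no factor $\nabla f$ survives inside a $(\cosh-1)$. Set $g=f-\int f(y)M(y)\,dy$. By the definition of the Luxemburg norm it is enough to exhibit $\rho>0$ with $\int \cosh\!\big(g(x)/\rho\big)M(x)\,dx\le 2$; I will check that $\rho=\tfrac{\pi}{2\sqrt{2\log 2}}\,m$ works, equivalently that $s:=1/\rho$ satisfies $sm\tfrac{\pi}{2}=\sqrt{2\log 2}$.

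First I would start from the Ornstein--Uhlenbeck inequality \eqref{eq:OU-inequality} and bound $\left|\nabla f\right|\le m$, obtaining
\[
\left|g(x)\right|\;\le\; m\,\frac{\pi}{2}\int_0^\infty\!\!\int_{\reals^n}\left|\sqrt{1-\euler^{-2t}}\,x-\euler^{-t}y\right|\,M(y)\,dy\;\nu(dt),
\]
where $\nu(dt)=\tfrac2\pi\,\tfrac{\euler^{-t}}{\sqrt{1-\euler^{-2t}}}\,dt$ is a probability measure on $[0,\infty)$ since $\int_0^\infty \tfrac{\euler^{-t}}{\sqrt{1-\euler^{-2t}}}\,dt=\tfrac\pi2$. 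Because $\cosh$ is even, convex, and increasing on $[0,\infty)$, Jensen's inequality for the probability measure $\nu\otimes M$ gives, for every $x$,
\[
\cosh\!\big(s\,g(x)\big)\;\le\;\int_0^\infty\!\!\int_{\reals^n}\cosh\!\Big(s\,m\,\tfrac{\pi}{2}\,\big|\sqrt{1-\euler^{-2t}}\,x-\euler^{-t}y\big|\Big)\,M(y)\,dy\;\nu(dt).
\]
Then I would integrate in $x$ against $M$ and use Fubini: for each fixed $t$, the vector $\sqrt{1-\euler^{-2t}}\,X-\euler^{-t}Y$ obtained from independent standard Gaussians $X,Y$ is again standard Gaussian (its coefficients satisfy $(1-\euler^{-2t})+\euler^{-2t}=1$), so the $x,y$-double integral equals $\int_{\reals^n}\cosh\!\big(sm\tfrac\pi2\left|z\right|\big)M(z)\,dz$, which is independent of $t$; hence $\int\cosh(sg)\,dM\le\int_{\reals^n}\cosh\!\big(sm\tfrac\pi2\left|z\right|\big)M(z)\,dz$. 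In the one-dimensional case the last integral is the Gaussian moment generating function, $\int\cosh(cz)M(z)\,dz=\euler^{c^{2}/2}$ with $c=sm\tfrac\pi2$, and the choice $c=\sqrt{2\log 2}$ yields $\int\cosh(sg)\,dM\le \euler^{\log 2}=2$, i.e. $\int (\cosh-1)(sg)\,dM\le 1$, which is precisely $\normat{\Lexp M}{g}\le 1/s=\tfrac{\pi}{2\sqrt{2\log 2}}\,m$.

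The step I expect to be the main obstacle is the reduction of the $x,y$-double integral to a single Gaussian integral: one has to observe that, once $\nabla f$ has been absorbed into the constant $m$, the only surviving quantity is the norm of a \emph{normalized} affine image of $(x,y)$ — whose law becomes $M$ after integrating out $x$ — and that $\int\cosh(c\left|z\right|)M(z)\,dz$ collapses to $\euler^{c^{2}/2}$ in dimension one. (For $n>1$ this integral is larger, $\left|z\right|$ being the norm of an $n$-dimensional Gaussian, so the constant would acquire an $n$-dependence and the estimate is to be read coordinatewise, in accordance with the one-dimensional Gaussian Poincar\'e inequality \eqref{eq:gauss-poincare} recalled above.) A minor technical point is the legitimacy of Jensen with the unbounded convex function $\cosh$, which causes no trouble here because $f\in C^1_b(\reals^n)$ keeps every integrand finite.
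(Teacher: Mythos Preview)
Your argument is correct and follows the same route as the paper's proof: bound $|\nabla f|\le m$ inside the Ornstein--Uhlenbeck representation \eqref{eq:OU-inequality}, apply Jensen with the probability measure $\nu\otimes M$, exploit the Gaussian rotation $\sqrt{1-\euler^{-2t}}\,X-\euler^{-t}Y\sim M$ to collapse the double integral, and then solve $\euler^{c^2/2}=2$ for the Luxemburg scale. The paper compresses all this into two lines and writes the final integral as $\int(\cosh-1)(\lambda\tfrac\pi2 m\,x)\,M(x)\,dx$, which is exactly your $\int\cosh(c|z|)\,M(z)\,dz-1$ in the one-dimensional case; your observation that the stated constant is really a $1$-dimensional statement (for $n>1$ the integrand involves $|Z|$ with $Z\sim M$ on $\reals^n$, whose exponential moments are larger) is a valid caveat that the paper leaves implicit.
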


\begin{proof}
Jensen's inequality applied to Eq. \eqref{eq:OU-inequality} and the assumption give
  \begin{multline*}
    (\cosh-1)\left(\lambda\left(f(x) - \gaussint {f(y)} y \right)\right) \le \\ \gaussint {(\cosh-1)\left(\lambda \frac\pi2 m x\right)} x = 
\expof{\frac{\lambda^2}2 \frac{\pi^2}4 m^2} - 1 \ .
  \end{multline*}
To conclude, choose $\lambda$ such that the the RHS equals 1. \qed   
\end{proof}

\begin{remark} Both Prop. \ref{prop:poincare-mixture} and Prop. \ref{prop:dispersion} are related with interesting results on the Gaussian space
other then bounds on norms. For example, if $f$ is a density of the Gaussian space, then the first one is a bound on the lack of uniformity $f - 1$, which, in turn, is related with the entropy of $f$. As a further example, consider a case where $\gaussint {f(x)} x = 0$ and $\normat \infty {\nabla f} < \infty$. In such a case, we have a bound on the Laplace transform of $f$, which in turn implies a bound on large deviations of the random variable $f$.
\end{remark}

To prepare the proof of an inequality for the exponential space, we start from Eq. \eqref{eq:OU-equality} and observe that for $f \in C^2_p(\reals^n)$ we can write
\begin{multline*}
  f(x) - \gaussint {f(y)} y = \\  
      \int_0^{\infty} \euler^{-t} \left(\gaussint {\nabla f(\euler^{-t}x+\sqrt{1-\euler^{-2t}}y)} y\right) \cdot x \ dt \\
  - \int_0^{\infty} \euler^{-2t} \gaussint {\nabla \cdot \nabla f(\euler^{-t}x+\sqrt{1-\euler^{-2t}}y)} y \ dt \ ,
\end{multline*}
where integration by parts and $(\partial/\partial y_i) M(y) = - y_i M(y)$ have been used to get the last term.

If we write $f_i(z) = \partiald {z_i}$ and $f_{ii}(z) = \partialdd {z_i} f(z)$ then
\begin{equation*}
  \partiald {x_i} P_tf(x) = \euler^{-t} P_tf_i(x)
\end{equation*}
and
\begin{equation*}
  \partialdd {x_i} P_tf(x) = \euler^{-2t} P_tf_{ii}(x) \ ,
\end{equation*}
so that
\begin{equation*}
  f(x) - \gaussint {f(y)} y = 
 \int_0^{\infty} \left(x \cdot \nabla P_tf(x) - \nabla \cdot \nabla P_tf(x)\right) \ dt \ .
\end{equation*}

If $g \in C^2_b(\reals^n)$ we have
\begin{multline}\label{eq:OU-selfadjoint}
  \gaussint {g(x) \left(f(x) - \gaussint {f(y)} y\right)} x = \\
  \int_0^\infty \left(\gaussint {g(x) x \cdot \nabla P_tf(x)} x - \gaussint {g(x) \nabla \cdot \nabla P_tf(x)} x \right) \ dt = \\ \int_0^\infty \left(\gaussint {g(x) x \cdot \nabla P_tf(x)} x + \int \nabla(g(x)M(x)) \cdot \nabla P_tf(x) \ dx \right) \ dt = \\ \int_0^\infty \gaussint {\nabla g(x) \cdot \nabla P_tf(x)} x \ dt =  \\ \int_0^\infty \euler^{-t} \gaussint {\nabla g(x) \cdot P_t \nabla f(x)} x \ dt \ . 
 \end{multline}

  Let $\avalof \cdot _1$ and $\avalof \cdot _2$ be two norms on $\reals^n$ such that $\avalof{x \cdot y} \le \avalof x _1 \avalof y _2$. Define the covariance of $f,g \in C^2_p(\reals^n)$ to be
  \begin{multline*}
    \covat M f g = \\ \gaussint{\left(f(x) - \gaussint {f(y)} y\right)g(x)} x = \\ \gaussint{\left(f(x) - \gaussint {f(y)} y\right)\left(g(x) - \gaussint {g(y)} y\right)} x \ .
  \end{multline*}

\begin{proposition}
If $f,g \in C^2_p(\reals^n)$, then
\begin{equation*}
  \avalof{\covat M  f g} \le \avalof {\normat {\LlogL M} {\nabla f}} _1 \avalof{\normat {(\LlogL M)^*} {\nabla g} } _2 \ .  
\end{equation*}
\end{proposition}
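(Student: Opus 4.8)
The plan is to read the estimate off the self-adjoint representation of the covariance in \eqref{eq:OU-selfadjoint}, in its last form,
\begin{equation*}
  \covat M f g = \int_0^\infty \euler^{-t} \gaussint {\nabla g(x) \cdot P_t\nabla f(x)} x \ dt \ ,
\end{equation*}
where, as in the text preceding \eqref{eq:OU-selfadjoint}, $P_t$ acts coordinate-wise on $\nabla f$. Fix $t \ge 0$ and write $f_i = \partial f/\partial x_i$, $g_i = \partial g/\partial x_i$, so that $\nabla g \cdot P_t\nabla f = \sum_{i=1}^n g_i\, P_t f_i$. The triangle inequality, the Orlicz--H\"older inequality \eqref{eq:orlicznorm} applied to each term, and the fact that $P_t$ is a contraction of $\LlogL M$ (the remark following \eqref{eq:OUsemigroup}) give
\begin{multline*}
  \avalof{\gaussint {\nabla g(x) \cdot P_t\nabla f(x)} x} \le \\ \sum_{i=1}^n \normat {(\LlogL M)^*}{g_i}\ \normat {\LlogL M}{P_t f_i} \le \sum_{i=1}^n \normat {(\LlogL M)^*}{g_i}\ \normat {\LlogL M}{f_i} \ ,
\end{multline*}
a bound independent of $t$.

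Now the last sum is exactly the scalar product in $\reals^n$ of the vector $\normat {\LlogL M}{\nabla f}$, with $i$-th coordinate $\normat {\LlogL M}{f_i}$, and the vector $\normat {(\LlogL M)^*}{\nabla g}$, with $i$-th coordinate $\normat {(\LlogL M)^*}{g_i}$. Both vectors have nonnegative coordinates, so the standing hypothesis $\avalof{x\cdot y} \le \avalof x _1 \avalof y _2$ bounds it by $\avalof{\normat {\LlogL M}{\nabla f}}_1 \avalof{\normat {(\LlogL M)^*}{\nabla g}}_2$. Since $\int_0^\infty \euler^{-t}\,dt = 1$, integrating the previous display over $t$ gives the asserted inequality.

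Two points need care, neither of them deep. First, \eqref{eq:OU-selfadjoint} was written for $g \in C^2_b(\reals^n)$, while here $f,g \in C^2_p(\reals^n)$; the same integrations by parts remain valid in the polynomial-growth class since $M$ and its derivatives decay faster than any polynomial, so all boundary terms vanish, and Fubini's theorem applies to the outer $t$-integral because $P_t$ maps polynomially bounded functions to polynomially bounded functions uniformly in $t$, the integrand being dominated by $\euler^{-t}$ times a fixed $M$-integrable function. Second, each $f_i$ is polynomially bounded, hence lies in $L^2(M) \subset \LlogL M$, so the norms $\normat {\LlogL M}{f_i}$ are finite; the norms $\normat {(\LlogL M)^*}{g_i}$ may fail to be finite (a polynomial of degree $\ge 3$ is not in the exponential space), but then the asserted bound is trivially true. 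The only genuine bookkeeping is thus the extension of \eqref{eq:OU-selfadjoint} to $C^2_p$; everything else is a direct composition of \eqref{eq:orlicznorm}, the contractivity of $P_t$ on the mixture space, and the compatibility of the two norms $\avalof\cdot_1$, $\avalof\cdot_2$ on $\reals^n$.
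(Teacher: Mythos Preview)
Your argument is correct and follows exactly the paper's route: start from the covariance representation \eqref{eq:OU-selfadjoint}, expand the inner product, apply the Orlicz--H\"older inequality \eqref{eq:orlicznorm} term by term, use contractivity of $P_t$ on $\LlogL M$, and finish with the duality $\avalof{x\cdot y}\le\avalof x_1\avalof y_2$ and $\int_0^\infty\euler^{-t}\,dt=1$. Your two closing remarks---the extension of \eqref{eq:OU-selfadjoint} from $C^2_b$ to $C^2_p$, and the possibility that $\normat{(\LlogL M)^*}{g_i}=\infty$---are legitimate points the paper leaves implicit, so your write-up is in fact slightly more careful than the original.
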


\begin{proof}
We use Eq. \eqref{eq:OU-selfadjoint} and the inequality \eqref{eq:orlicznorm}.
\begin{multline*}
  \avalof{ \gaussint {\nabla g(x) \cdot P_t \nabla f(x)} x} \le \\
  \sum_{i=1}^n \avalof{\gaussint {g_i(x) P_t f_i(x)} x} \le \\
  \sum_{i=1}^n \normat {\LlogL M^*} {g_i} \normat {\LlogL M} {P_t f_i} \le  \\
  \sum_{i=1}^n \normat {\LlogL M^*} {g_i} \normat {\LlogL M} {f_i} \le  \\
  \avalof {\normat {\LlogL M^*} {\nabla g}} _1 \avalof{\normat {\LlogL M} {\nabla f}} _2 \ . 
\end{multline*}
\qed
\end{proof}

If $g_n$ is a sequence such that $\nabla g_n \to 0$ in $\Lexp M$, then the inequality above shows that $g_n - \gaussint {g_n(x)} x \to 0$.

\section{Exponential manifold on the Gaussian space}
\label{sec:expon-manif-gauss}

In this section we first review the basic features of our construction of IG as it was discussed in the original paper \cite{MR1370295}. Second, we see how the choice of the Gaussian space adds new features, see \cite{MR3126029,MR3365132}. We normally use capital letters to denote random variables and write $\expectat M U = \gaussint {U(x)} x$.

We define $\Bspace M = \setof{U \in \Lexp M}{\expectat M U = 0}$. The positive densities of the Gaussian space we consider are all of the exponential form $p = \euler^{U}/Z_M(U)$, with $U \in \Bspace M \subset \Lexp M$ and normalization (moment functional, partition functional) $Z_M(U) = \expectat M U < \infty$.

We can also write $p = \euler^{U-K_K(U)}$, where $K_M(U) = \log Z_M(U)$ is called cumulant functional. Because of the assumption $\expectat M U = 0$, the chart mapping

\begin{equation*}
s_M \colon  p \mapsto \log p - \expectat M {\log p} = U 
\end{equation*}
is well defined.

Both the extended real functions $Z_M$ and $K_M$ are convex on $\Bspace M$. The common proper domain of $Z_M$ and of $K_M$ contains the open unit ball of $\Bspace M$.  In fact, if $\expectat M {(\cosh-1)(\alpha U)} \le 1$, $\alpha > 1$, then, in particular, $Z_M(U) \le 4$.

We denote $\sdomain M$ the interior of the proper domain of the cumulant functional. The set $\sdomain M$ is nonempty, convex, star-shaped, and solid i.e., the generated vector space is $\Bspace M$ itself.

We define the maximal exponential model to be the set of densities on the Gaussian space $\maxexp M = \setof{\euler^{U - K_M(U)}}{U \in \sdomain M}$. 

We prove below that the mapping $e_M = s^{-1}_M \colon \sdomain M \to \maxexp M$ is smooth. The chart mapping itself $s_M$ is not and induces on $\maxexp M$ a topology that we do not discuss here.

\begin{proposition}\label{prop:SMtoLlogL}
  The mapping $e_M \colon \sdomain M \ni U \mapsto \euler^{U-K_M(U)}$ is continuously differentiable in $\LlogL M$ with derivative $d_H e_M(U) = e_M(U) (U - \expectat {e_M(U)\cdot M} {H})$.
\end{proposition}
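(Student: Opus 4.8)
The plan is to establish three things and assemble them: that the cumulant functional $K_M$ is differentiable on $\sdomain M$ with $d_H K_M(U)=\expectat{e_M(U)\cdot M}{H}$ for $H\in\Bspace M$; that the difference quotients of $e_M$ converge \emph{in the norm of $\LlogL M$} to $e_M(U)\bigl(H-\expectat{e_M(U)\cdot M}{H}\bigr)$, which gives G\^ateaux differentiability with the asserted derivative; and that this derivative is a continuous function of $U$ with values in $\mathcal L(\Bspace M,\LlogL M)$, which upgrades G\^ateaux to Fr\'echet differentiability. Two tools are used throughout: the $\Delta_2$-bound \eqref{eq:delta2}, by which a sequence dominated in $\LlogL M$ and converging pointwise converges in the Luxemburg norm; and the continuous embedding $L^a(M)\hookrightarrow\LlogL M$, $a>1$, of Proposition~\ref{prop:O-inclusions}, which reduces membership and norm estimates in $\LlogL M$ to Lebesgue-space computations.

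Fix $U\in\sdomain M$ and $H\in\Bspace M$. Since $\sdomain M$ is open I may choose $\epsilon>0$ and $a>1$ with $a(U\pm\epsilon H)\in\sdomain M$, so that $\euler^{U\pm\epsilon H}\in L^a(M)$ and hence $\euler^{U+\epsilon\avalof H}\le\euler^{U+\epsilon H}+\euler^{U-\epsilon H}\in L^a(M)\subset\LlogL M$. Differentiation under the integral sign in $t\mapsto\gaussint{\euler^{U(x)+tH(x)}}{x}$ is legitimate for small $\avalof t$, because $\avalof H\,\euler^{U+tH}\le\eta^{-1}\euler^{U+tH+\eta\avalof H}$ is dominated by an $L^1(M)$ function uniformly in such $t$; hence $t\mapsto K_M(U+tH)$ is differentiable at $0$ with $d_H K_M(U)=\expectat{e_M(U)\cdot M}{H}$. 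Setting $r(t)=tH-\bigl(K_M(U+tH)-K_M(U)\bigr)$ we get
\begin{equation*}
  \frac{e_M(U+tH)-e_M(U)}{t}=e_M(U)\,\frac{\euler^{r(t)}-1}{t}\ .
\end{equation*}
As $K_M$ is convex and finite near $U$, it is locally Lipschitz there, so $t^{-1}\bigl(K_M(U+tH)-K_M(U)\bigr)$ stays bounded as $t\to0$; combined with $\avalof{\euler^{b}-1}\le\avalof{b}\,\euler^{\avalof b}$ this bounds the difference quotient, in absolute value, by a fixed multiple of $e_M(U)\,(1+\avalof H)\,\euler^{\epsilon\avalof H}$, hence — after absorbing the linear factor $1+\avalof H$ into a slightly larger exponential and provided $\epsilon$ was taken small enough (and $a$ accordingly) at the start — by a function in $L^a(M)\subset\LlogL M$. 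The pointwise limit of the difference quotient being $e_M(U)\bigl(H-d_H K_M(U)\bigr)$, the $\Delta_2$-dominated-convergence remark gives convergence in $\LlogL M$. Thus $e_M$ is G\^ateaux differentiable with $d_H e_M(U)=e_M(U)\bigl(H-\expectat{e_M(U)\cdot M}{H}\bigr)$.

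It remains to show that $U\mapsto d e_M(U)$ is continuous from $\sdomain M$ into $\mathcal L(\Bspace M,\LlogL M)$. Each $d e_M(U)$ is a bounded operator: $\avalof{\expectat{e_M(U)\cdot M}{H}}\le 2\normat{\LlogL M}{e_M(U)}\normat{\Lexp M}{H}$ by the separating duality, while $e_M(U)\in L^a(M)$ and $H\in L^b(M)$ for every finite $b$ give, with $1/c=1/a+1/b$ and $c>1$, that $e_M(U)H\in L^c(M)\subset\LlogL M$ with norm $\lesssim\normat{L^a(M)}{e_M(U)}\,\normat{\Lexp M}{H}$; hence $\normat{\mathcal L(\Bspace M,\LlogL M)}{d e_M(U)}\le C\bigl(\normat{L^a(M)}{e_M(U)}\bigr)$. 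For continuity, fix a small ball $\mathcal V\subset\sdomain M$ around $U$; by openness a single $a>1$ works with $\euler^{aV}\in L^1(M)$ for all $V\in\mathcal V$, and — this is the delicate point, see below — the resulting bound can be taken uniform on a possibly smaller $\mathcal V$, so $\{e_M(V)\}_{V\in\mathcal V}$ is bounded in $L^a(M)$. Combining pointwise convergence along sequences with the ensuing equi-integrability (Vitali) and the continuity of $K_M$, one gets $e_M(V)\to e_M(U)$ in $L^{a_1}(M)$ for every $1<a_1<a$ as $V\to U$ in $\Bspace M$. Writing $d_H e_M(V)-d_H e_M(U)$ as the sum, up to signs, of $(e_M(V)-e_M(U))H$, $\expectat{e_M(V)\cdot M}{H}\,(e_M(V)-e_M(U))$ and $\bigl(\expectat{e_M(V)\cdot M}{H}-\expectat{e_M(U)\cdot M}{H}\bigr)\,e_M(U)$, and estimating each summand in $\LlogL M$ by H\"older in Lebesgue spaces (with a third exponent $>1$), using the $L^{a_1}$-convergence, the uniform $L^a$-bounds and the duality bound for the scalar coefficients, one obtains $\sup_{\normat{\Lexp M}{H}\le1}\normat{\LlogL M}{d_H e_M(V)-d_H e_M(U)}\to0$, i.e.\ $d e_M(V)\to d e_M(U)$ in $\mathcal L(\Bspace M,\LlogL M)$. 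Hence $e_M$ is $C^1$ from $\sdomain M$ to $\LlogL M$ with the stated derivative; iterating the scheme gives $C^\infty$.

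The difference-quotient computation is routine; the main obstacle is the uniform bound $\sup_{V\in\mathcal V}\normat{L^1(M)}{\euler^{aV}}<\infty$ on a neighbourhood of $U$ for a fixed exponent $a>1$ — equivalently, the local boundedness, hence local Lipschitz continuity, of the convex functional $K_M$ on $\sdomain M$ — and the equi-integrability it produces. This is where the geometry of $\sdomain M$ as the interior of the proper domain of $K_M$ is needed: for $V$ near $U$ one writes $aV=aU+a(V-U)$ and applies H\"older with conjugate exponents $p,q$ chosen so that $apU\in\sdomain M$ (possible since $\sdomain M$ is open and star-shaped) and $\gaussint{\euler^{aq(V-U)}}{x}\to 1$ as $V-U\to0$ in $\Lexp M$ (because $\normat{\Lexp M}{W}\to0$ forces $\gaussint{(\cosh-1)(\beta W)}{x}\to0$ for each fixed $\beta$). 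This is the circle of ideas behind the Pistone--Sempi portmanteau theorem, i.e.\ the equivalence of $\Lexp M$ with $\Lexp{e_M(U)\cdot M}$ for $U\in\sdomain M$ \cite{MR1370295,MR1704564}; one may instead invoke that theorem directly, at the cost of importing a result of comparable depth.
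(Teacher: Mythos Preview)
Your argument is correct, and the key analytic obstacle you isolate --- the locally uniform $L^a(M)$-bound on $\euler^{aV}$ near $U$ --- is exactly the same point the paper works hardest on (its Step~2). However, the overall architecture differs.

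The paper does not pass through G\^ateaux differentiability plus continuity of the derivative. Instead it proves in one stroke (Step~3) that for every $m$ the $m$-linear map $(H_1,\dots,H_m)\mapsto H_1\cdots H_m\,\euler^U$ is continuous from $(\Lexp M)^m$ to $\LlogL M$, with a bound that is uniform for $U$ in a neighbourhood; then (Step~4) the elementary Taylor identity
\[
\euler^{U+H}-\euler^U-\euler^U H=\int_0^1(1-s)\,\euler^{U+sH}H^2\,ds\le\Bigl(\tfrac13\euler^U+\tfrac16\euler^{U+H}\Bigr)H^2
\]
feeds directly into the $m=2$ case to yield a Fr\'echet remainder of order $\normat{\Lexp M}{H}^2$, uniformly in $U$. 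This immediately gives $C^1$ and, since the multilinear estimate is stated for all $m$, the higher derivatives as well --- in effect it proves analyticity of $U\mapsto\euler^U$ into $\LlogL M$.

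Your route --- directional difference quotients, domination in $\LlogL M$ via $\Delta_2$, then operator-norm continuity of $U\mapsto de_M(U)$ obtained from Vitali-type equi-integrability and H\"older in Lebesgue spaces --- is more modular and relies on the general principle ``G\^ateaux $+$ continuous derivative $\Rightarrow$ $C^1$ Fr\'echet''. It is perfectly sound, but it gives no quantitative remainder, and your closing line ``iterating the scheme gives $C^\infty$'' would require redoing the whole machinery at each order, whereas the paper's single multilinear bound does all orders at once. A minor point: your parenthetical justification that $\normat{\Lexp M}{W}\to0$ forces $\int(\cosh-1)(\beta W)\,dM\to0$ is correct (norm convergence to $0$ is equivalent to modular convergence to $0$ for every scale, even without $\Delta_2$), but it would be worth saying that explicitly rather than leaving it as an aside, since this is precisely where the non-$\Delta_2$ nature of $\Lexp M$ could cause concern.
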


\begin{proof} We split the proof into numbered steps.
  \begin{enumerate} 
  \item If $U \in \sdomain M$ then $\alpha U \in \sdomain M$ for some $\alpha > 1$. Moreover, $(\cosh-1)_*(y) \le C(\alpha) \avalof y^\alpha$. Then
\begin{equation*}
  \expectat M {(\cosh-1)_*(e^U)} \le \text{const} \ \expectat M {\euler^{\alpha U}} < \infty \ .
\end{equation*}
so that $\euler^{U} \in \LlogL M$. It follows that $e_M(U) = \euler^{U-K_M(U)} \in \LlogL M$.
\item
Given $U \in \sdomain M$, as $\sdomain M$ is open in the exponential space $\Lexp M$, there exists a constant $\rho > 0$ such that $\normat {\Lexp M} H \le \rho$ implies $U + H \in \sdomain M$. In particular,
\begin{equation*}
  U + \frac{\rho}{\normat{\Lexp M} U} U = \frac{\normat{\Lexp M} U+ \rho}{\normat{\Lexp M} U} U \in \sdomain M \ .
\end{equation*}

We have, from the H\"older's inequality with conjugate exponents
\begin{equation*}
  \frac{2(\normat{\Lexp M} U+\rho)}{2\normat{\Lexp M} U+\rho}, \quad  \frac{2(\normat{\Lexp M} U+\rho)}{\rho} \ ,
\end{equation*}
that
\begin{multline*}
  \expectat M {\expof{\frac{2\normat{\Lexp M} U+\rho}{2\normat{\Lexp M} U}(U+H)}} \le \\
  \expectat M {\expof{\frac{\normat{\Lexp M} U + \rho}{\normat{\Lexp M} U}}U}^{\frac{2\normat{\Lexp M} U+\rho}{2(\normat{\Lexp M} U+\rho)}} \quad \times \\
  \expectat M {\expof{\frac{(2\normat{\Lexp M} U + \rho)(\normat{\Lexp M} U+\rho)}{\rho\normat{\Lexp M} U}H}}^{\frac{\rho}{2(\normat{\Lexp M} U+\rho)}} \ .
\end{multline*}
In the RHS, the first factor is finite because the random variable under $\exp$ belong to $\sdomain M$, while the second factor is bounded by a fixed constant for all $H$ such that 
\begin{equation*}
  \normat {\Lexp M} H \le \frac{\rho\normat{\Lexp M} U}{(2\normat{\Lexp M} U + \rho)(\normat{\Lexp M} U+\rho)} \ .
\end{equation*}
This shows that $e_M$ is locally bounded in $\Lexp M$.
\item
Let us now consider $\prod_{i=1}^m H_i\euler^U$ with $\normat {\Lexp M}{H_i} \le 1$ for $i=1,\dots,m$ and $U \in \sdomain M$. Chose an $\alpha > 1$ such that $\alpha U \in \sdomain M$, and observe that, because of the previous item applied to $\alpha U$, the mapping $U \mapsto \expectat M {\euler^{\alpha U}}$ is uniformly bounded in a neighborhood of $U$ by a constant $C(U)$. As $\alpha > (\alpha+1)/2 > 1$ and we have the inequality $(\cosh-a)_*(y) \le \frac{C((1+\alpha)/2)}{(1+\alpha)/2} \avalof y^{(1+\alpha)/2}$. It follows, using the $(m+1)$-terms Fenchel-Young inequality for conjugate exponents $2\alpha/(\alpha+1)$ and $2m\alpha/(\alpha-1)$ ($m$ times), that
  \begin{multline*}
    \expectat M {(\cosh-1)_*\left(\prod_{i=1}^mH_i\euler^U\right)} \le \\  \frac{C((1+\alpha)/2)}{(1+\alpha)/2} \expectat M {\prod_{i=1}^m \avalof H ^{(1+\alpha)/2} \euler^{(1+\alpha)U/2}} \le \\ \frac{C((1+\alpha)/2)}{(1+\alpha)/2} \left(\expectat M {\euler^{\alpha U}} + \sum_{i=1}^m \expectat M {\avalof {H_i}^{m\alpha(1+\alpha)/(\alpha-1)}}\right)\le \\ \frac{C((1+\alpha)/2)}{(1+\alpha)/2}\left(C(U) + \sum_{i=1}^m \normat {L^{m\alpha(1+\alpha)/(\alpha-1)}(M)} H ^{m\alpha(1+\alpha)/(\alpha-1)}\right) \ ,
  \end{multline*}
which is bounded by a constant depending on $U$ and $\alpha$. We have proved  that the multi-linear mapping $(H_1,\dots,H_m) \mapsto \prod_{i=1}^m H_i \euler^U$ is continuous from $(\Lexp M)^m$ to $\LlogL M$, uniformly in a neighborhood of $U$.
\item
  Let us consider now the differentiability of $U \mapsto \euler^U$. For $U+H \in \sdomain M$, it holds
  \begin{multline*}
  0 \le  \euler(U+H) - \euler^U - \euler^U H = \int_0^1 (1-s) \euler^{U + sH} H^2 \ ds = \\ \int_0^1 (1-s) \euler^{(1-s) U + s(U+H)} H^2 \ ds \le \\ \int_0^1 (1-s)^2 \euler^U H^2 \ ds + \int_0^1 s(1-s) \euler^{U+H} H^2 \ ds = \\ \left(\frac13 e^U + \frac16 \euler^{U+H} \right) H^2 \ .
\end{multline*}
Because of the previous item, the RHS is bounded by a constant times $\normat {\Lexp M} H ^2$ for $\normat {\Lexp M} H$ small, which in turn implies the differentiability. Note that the bound is uniform in a neighborhood of $U$. 
\item It follows that $Z_M$ and $K_M$ are differentiable and also $e_M$ is differentiable with locally uniformly continuous derivative.
\end{enumerate}
\qed
\end{proof}

We turn to discuss the approximation with smooth random variables. We recall that $(\cosh-1)_*$ satisfies the $\Delta_2$-bound
\begin{equation*}
  (\cosh-1)_*(ay) \le \max(\avalof a,a^2)(\cosh-1)_*(y)
\end{equation*}
hence, bounded convergence holds for the mixture space $\LlogL M$. That, in turn, implies separability. This is not true for the exponential space $\Lexp M$. Consider for example $f(x) = \absoluteval x^2$. This function belongs in $\Lexp M$, but, if $f_R(x) = f(x)(\avalof x \ge R)$, then
\begin{equation*}
  \int (\cosh-1)(\epsilon^{-1} f_R(x)) \ M(x)dx \ge \frac12 \int_{\absoluteval x > R} \euler^{\epsilon^{-1} \absoluteval x^2} \ M(x)dx = +\infty, \quad \text{if $\epsilon \le 2$} \ ,
\end{equation*}
hence there is no convergence to 0. However, the truncation of $f(x) = \absoluteval x$ does converge.

While the exponential space $\Lexp M$ is not separable nor reflexive, we have the following weak property.  Let $\Ccs{\reals^n}$ and $\Cinfcs{\reals^n}$ respectively denote the space of continuous real functions with compact support and the space of infinitely-differentiable real functions on $\reals^n$ with compact support. The following proposition was stated in \cite[Prop. 2]{pistone:2017-GSI2017}.

\begin{proposition}\label{prop:density} 
For each $f \in \Lexp M$ there exist a nonnegative function $h \in \Lexp M$ and a sequence $g_n \in \Cinfcs{\reals^n}$ with $\absoluteval{g_n} \le h$, $n=1,2,\dots$, such that $\lim_{n\to\infty} g_n = f$ a.e. As a consequence, $\Cinfcs {\reals^n}$ is weakly dense in $\Lexp M$. 
\end{proposition}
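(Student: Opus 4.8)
The plan is to prove the pointwise-domination statement first, since the weak density is an immediate consequence via dominated convergence in the duality with $\LlogL M$. Fix $f \in \Lexp M$. I would first reduce to $f \ge 0$ by splitting $f = f^+ - f^-$ and treating each part separately (the dominating function $h$ for $f$ is then the sum of the two dominating functions). So assume $f \ge 0$.

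Next I would construct the approximating sequence in two stages: truncation in value and truncation in space, followed by mollification. Concretely, set $f_R(x) = \min(f(x), R)\,\mathbf 1_{\Omega_R}(x)$ where $\Omega_R = \{\avalof x < R\}$; these are bounded, compactly supported, and $f_R \uparrow f$ pointwise as $R \to \infty$. Each $f_R \in L^\infty$ with compact support, so a standard mollification $f_R * \rho_\varepsilon$ (with $\rho_\varepsilon$ a smooth compactly supported mollifier) gives elements of $\Cinfcs{\reals^n}$ converging a.e.\ (and in every $L^a$ on compacts) to $f_R$ as $\varepsilon \to 0$, with the uniform bound $\avalof{f_R * \rho_\varepsilon} \le \sup f_R \le R$ and support contained in $\Omega_{R+1}$. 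Then I would run a diagonal argument: choose $R_n \to \infty$ and $\varepsilon_n \to 0$ so that $g_n := f_{R_n} * \rho_{\varepsilon_n}$ satisfies $\avalof{g_n - f_{R_n}} \to 0$ a.e.\ (e.g.\ extract an a.e.-convergent subsequence at each stage, or use that $L^a$-convergence on compacts gives a.e.-convergent subsequences), hence $g_n \to f$ a.e.

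The main obstacle is producing a single dominating function $h \in \Lexp M$ with $\avalof{g_n} \le h$ for all $n$. Truncation-in-value is harmless, but mollification can enlarge a function, so I cannot simply take $h = f$. The fix is to dominate the mollified truncations by the maximal function or, more elementarily, to use that for $f \ge 0$ one has $f_R * \rho_\varepsilon (x) \le \sup_{\avalof{z} \le 1} f(x - \varepsilon z) \le \sup_{\avalof{z} \le 1} f(x-z)$ once $\varepsilon \le 1$. So the natural candidate is $h(x) = \sup_{\avalof z \le 1} f(x - z)$. The real work is to show $h \in \Lexp M$ when $f \in \Lexp M$. Here I would exploit the Gaussian weight: if $\expectat M{(\cosh-1)(\alpha f)} < \infty$ for some $\alpha > 0$, then
\begin{equation*}
  \int (\cosh-1)\!\left(\tfrac{\alpha}{2} h(x)\right) M(x)\,dx \le \int \sup_{\avalof z \le 1} (\cosh-1)\!\left(\tfrac{\alpha}{2} f(x-z)\right) M(x)\,dx \le \int_{\reals^n} \left(\,\sup_{\avalof z\le 1}(\cosh-1)\!\left(\tfrac\alpha2 f(y)\right)\,\right) M(y+z)\Big|_{\ldots}\,dy\,,
\end{equation*}
and then bound $\sup_{\avalof z \le 1} M(y - z) \le \euler^{\avalof y + 1/2} M(y) \cdot (2\pi)^{\ldots}$; more cleanly, $M(y-z) \le \euler^{1/2}\euler^{\avalof y} M(y)$ for $\avalof z \le 1$, so the translated-weight integral is controlled by $\int (\cosh-1)(\tfrac\alpha2 f(y))\,\euler^{\avalof y} M(y)\,dy$. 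This last integral is finite provided $f \in \Lexp M$ with a little room in $\alpha$ (shrinking $\alpha$ absorbs the $\euler^{\avalof y}$ factor into the Gaussian, by the polynomial-bound reasoning of Section \ref{sec:ineq-motiv}, since $\euler^{\avalof y}$ is dominated by $\euler^{\delta \avalof y^2}$ times a constant for any $\delta > 0$). Hence $h \in \Lexp M$, completing the domination.

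Finally, for the weak-density conclusion: given $g \in \LlogL M = (\Lexp M)^*$, the dominated convergence theorem applies to $\int g_n g\, M$ because $\avalof{g_n g} \le h \avalof g \in L^1(M)$ (product of an $\Lexp M$ function and an $\LlogL M$ function is integrable by Fenchel--Young), so $\expectat M{g_n g} \to \expectat M{fg}$; thus $g_n \to f$ weakly-$*$, and since $\Cinfcs{\reals^n} \subset \Lexp M$ this gives the asserted weak density. I would remark that the argument is essentially the classical one, the only Gaussian-specific point being the verification that the local maximal function $h$ stays in $\Lexp M$, which is where the rapid decay of $M$ is used.
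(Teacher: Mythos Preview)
Your overall strategy---truncate, mollify, diagonalize, then invoke dominated convergence for the weak density---is natural, and the final paragraph on weak density is correct. However, the heart of your argument, namely that $h(x)=\sup_{\absoluteval z\le 1}f(x-z)$ lies in $\Lexp M$, contains a genuine gap, and in fact the claim is false in general.

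The step where you pass from $\int \sup_{\absoluteval z\le 1}(\cosh-1)\bigl(\tfrac\alpha2 f(x-z)\bigr)\,M(x)\,dx$ to an integral with a translated weight confuses $\int \sup_z(\cdots)$ with $\sup_z\int(\cdots)$. Your change of variables and the bound $M(y+z)\le \euler^{1/2}\euler^{\absoluteval y}M(y)$ control the latter, uniformly in $z$, but not the former; and it is the former you need. Concretely, on $\reals$ take $f=\sum_{n\ge 2} n^3\,\mathbf 1_{I_n}$ with $I_n$ an interval of length $\euler^{-n^3}$ centred at $n$. For any $0<\alpha<1$ one has $\int(\cosh-1)(\alpha f)\,M \lesssim \sum_n \euler^{\alpha n^3}\euler^{-n^3}<\infty$, so $f\in\Lexp M$; yet $h(x)\ge n^3$ on the whole interval $[n-1,n+1]$, whence $\int(\cosh-1)(\beta h)\,M \gtrsim \sum_n \euler^{\beta n^3}\euler^{-n^2/2}=\infty$ for every $\beta>0$, and $h\notin\Lexp M$. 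There is also a measurability issue: the pointwise supremum over an uncountable set of translates is not well-defined on equivalence classes.

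The paper avoids the maximal-function detour entirely. It uses a monotone-class argument to show that every $f\in\Lexp M$ admits an a.e.\ approximation by $g_n\in\Ccs{\reals^n}$ with $\absoluteval{g_n}\le\absoluteval f$---so the dominating function is $\absoluteval f$ itself---and then passes to $\Cinfcs{\reals^n}$ by uniform approximation of each $g_n$, which costs only an additive constant in the bound. If you want to salvage the constructive route, mollify at a scale chosen small relative to the modulus of continuity of a \emph{continuous} compactly supported approximant (not of the discontinuous $f_R$ directly), so that the mollified function stays within, say, $\absoluteval f+1$.
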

\begin{proof} Our proof uses a monotone class argument \cite[Ch. II]{dellacherie|meyer:75}. Let $\mathcal H$ be the set of all random variables $f \in \Lexp M$ for which there exists a sequence $g_n \in \Ccs{\reals^n}$ such that $g_n(x) \to f(x)$ a.s. and $\absoluteval {g_n(x)} \le \absoluteval{f(x)}$. Let us show that $\mathcal H$ is closed for monotone point-wise limits of positive random variables. Assume $f_n \uparrow f$ and $g_{n,k} \to f_n$ a.s. with $\absoluteval{g_{n,k}} \le f_n \le f$. Each sequence $(g_{nk})_k$ is convergent in $L^1(M)$ then, for each $n$ we can choose a $g_n$ in the sequence such that $\normat {L^1(M)} {f_n - g_n} \leq 2^{-n}$. It follows that $\absoluteval{f_n - g_n} \to 0$ a.s. and also $f - g_n = (f-f_n)+(f_n-g_n) \to 0$ a.s. Now we can apply the monotone class argument to $\Ccs{\reals^n} \subset \mathcal H$. The conclusion follows from the uniform density of $\Cinfcs {\reals^n}$ in $\Ccs{\reals^n}$. \qed 
\end{proof}

The point-wise bounded convergence of the previous proposition implies a result of local approximation in variation of finite-dimensional exponential families.

\begin{proposition}
Given $U_1,\dots, U_m \in \Bspace M$, consider the exponential family
\begin{equation*}
  p_\theta = \expof{\sum_{j=1}^m \theta_j U_j - \psi(\theta)}, \quad \theta \in \Theta \ .
\end{equation*}
There exists a sequence $(U_1^k,\dots,U_m^k)_{k\in\naturals}$ in $\Cinfcs {\reals^n}^m$ and an $\alpha > 0$ such that the sequence of exponential families
\begin{equation*}
  p^k_\theta = \expof{\sum_{j=1}^m \theta_j U^k_j - \psi_k(\theta)}
\end{equation*}
is convergent in variation to $p_\theta$ for all $\theta$ such that $\sum\avalof{\theta_j} < \alpha$.
\end{proposition}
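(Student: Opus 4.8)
The plan is to leverage Proposition~\ref{prop:density} coordinate-wise on the finite family of sufficient statistics, and then to control the normalizations and the densities uniformly so that point-wise convergence of the densities upgrades to convergence in variation via Scheff\'e's lemma. First I would fix, for each $j=1,\dots,m$, a nonnegative $h_j \in \Lexp M$ and a sequence $g^k_j \in \Cinfcs{\reals^n}$ with $\avalof{g^k_j} \le h_j$ and $g^k_j \to U_j$ a.e., as furnished by Proposition~\ref{prop:density}. Since we need the approximants to lie in $\Bspace M$ (zero $M$-mean), I would set $U^k_j = g^k_j - \expectat M {g^k_j}$; dominated convergence (the $g^k_j$ are dominated by $h_j \in \Lexp M \subset L^1(M)$) gives $\expectat M {g^k_j} \to \expectat M {U_j} = 0$, so $U^k_j \to U_j$ a.e.\ and the $U^k_j$ are still dominated, say by $h_j + \sup_k\avalof{\expectat M{g^k_j}} =: \tilde h_j \in \Lexp M$.

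Next I would choose the radius $\alpha$. The key is to find a single $\alpha>0$ and a single integrable envelope that works for all $k$. Put $h = \sum_j \tilde h_j \in \Lexp M$; choose $\beta>1$ with $\beta h \in \sdomain M$ (possible since $\sdomain M$ is open and star-shaped and contains a neighborhood of $0$, or directly because $\Lexp M$'s unit ball scaled lies in the proper domain), and also $\beta(-h)\in\sdomain M$. Set $\alpha$ so that $\sum_j\avalof{\theta_j}<\alpha$ forces $\sum_j\theta_j U^k_j$ and $\sum_j \theta_j U_j$ to lie in the ball of radius $(\beta-1)$-ish around elements of $\sdomain M$; concretely, for $\sum\avalof{\theta_j} < \alpha$ we have $\avalof{\sum_j\theta_j U^k_j} \le \alpha h$ pointwise, so $\expof{\sum_j\theta_j U^k_j} \le \expof{\alpha h} \le \expof{\beta h}/\dots$, and $\expof{\beta h}\in L^1(M)$ by step~1 of Proposition~\ref{prop:SMtoLlogL}. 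Thus $\sup_k Z_M(\sum_j\theta_j U^k_j) \le \expectat M{\expof{\alpha h}} < \infty$; similarly the normalizations are bounded below away from $0$ by Jensen (since $\expectat M{\sum_j\theta_j U^k_j}\to 0$). Hence $\psi_k(\theta) = \log Z_M(\sum_j \theta_j U^k_j) \to \psi(\theta)$ by dominated convergence with the envelope $\expof{\alpha h} + \expof{-\alpha h} \in L^1(M)$ (to dominate $\expof{\sum\theta_j U^k_j}$ whatever the signs), and $\psi_k$ stays bounded.

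Then the densities converge pointwise: $p^k_\theta(x) = \expof{\sum_j\theta_j U^k_j(x) - \psi_k(\theta)} \to \expof{\sum_j\theta_j U_j(x) - \psi(\theta)} = p_\theta(x)$ for a.e.\ $x$, using continuity of $\exp$ and the two convergences just established. Since each $p^k_\theta$ and $p_\theta$ is a probability density with respect to $M$, Scheff\'e's lemma upgrades a.e.\ convergence of densities to $L^1(M)$ convergence, i.e.\ convergence in total variation, for each fixed $\theta$ with $\sum_j\avalof{\theta_j}<\alpha$. I expect the main obstacle to be the bookkeeping in step~2: making the choice of $\alpha$ genuinely uniform in $k$, which requires the a.e.\ domination $\avalof{U^k_j}\le\tilde h_j$ rather than mere convergence, and checking that $\beta h, \beta(-h) \in \sdomain M$ can indeed be arranged (this uses that $h\in\Lexp M$ and that the proper domain contains the open unit ball, so it suffices to take $h$ small enough or rescale — equivalently one may replace each $h_j$ by $h_j$ truncated/scaled, at the cost of shrinking $\alpha$). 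Everything else is routine dominated convergence plus Scheff\'e.
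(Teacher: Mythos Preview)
Your proof is correct and follows essentially the same route as the paper's: invoke Proposition~\ref{prop:density} for dominated pointwise approximants, assemble a single envelope $h \in \Lexp M$, choose $\alpha>0$ so that $\euler^{\alpha h}$ is $M$-integrable, apply dominated convergence to the partition functions, and finish with Scheff\'e's lemma. The paper skips your centering step (the statement only requires $U^k_j \in \Cinfcs{\reals^n}$, not $U^k_j \in \Bspace M$) and picks $\alpha$ more directly via $\expectat M{(\cosh-1)(\alpha h)} \le 1$ rather than routing through $\sdomain M$, but these are cosmetic differences.
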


\begin{proof}
  For each $j=1,\dots,m$ there exists a point-wise converging sequence $(U_j^k)_{k\in\naturals}$ in $\Cinfcs{\reals^n}$ and a bound $h_i \in \Lexp M$. define $h = \wedge_{j=1,\dots,m} h_j$.  Let $\alpha > 0$ be such that $\expectat M {(\cosh-1)(\alpha h)} \le 1$, which in turn implies $\expectat M {\euler^{\alpha h}} \le 4$. Each $\sum_j \theta_j U_J^k$ is bounded in absolute value by $\alpha h$ if $\sum_{j=1}^m \avalof {\theta_j} < \alpha$.

As
\begin{equation*}
  \psi(\theta) = K_M(\sum_{j=1}^m \theta_j U_j) = \log \expectat M {\euler^{\sum_{j=1}^m \theta_j U_j}}
\end{equation*}
and
\begin{equation*}
  \psi_k(\theta) = \log \expectat M {\euler^{\sum_{j=1}^m \theta_j U^k_j}}
\end{equation*}
dominated convergence implies $\psi_k(\theta) \to \psi(\theta)$ and hence $p_k(x;\theta) \to p(x;\theta)$ for all $x$ if $\sum_{j=1}^m \avalof {\theta_j} < \alpha$. Sheff\'e lemma concludes the proof. \qed
\end{proof}

\subsection{Maximal exponential manifold as an affine manifold} \label{sec:affine}
The maximal exponential model $\maxexp M = \setof{\euler^{U - K_M(U)}}{U \in \Bspace M}$ is an elementary manifold embedded into $\LlogL M$ by the smooth mapping $e_M \colon \sdomain M \to \LlogL M$. There is actually an atlas of charts that makes it into an affine manifold, see \cite{MR3126029}. We discuss here some preliminary results about this important topic.

An elementary computation shows that
\begin{equation}\label{eq:simple}
  (\cosh-1)^2(u) = \frac12(\cosh-1)(2u) - 2 (\cosh-1)(u) \le \frac 12 (\cosh-1)(2u) 
\end{equation}
and, iterating,
\begin{equation*}
  (\cosh-1)^{2k}(u) \le \frac 1{2^k} (\cosh-1)(2^ku) \ .   
\end{equation*}

\begin{proposition}\label{prop:tangentspaces}
  If $f,g\in \maxexp M$, then $\Lexp {f\cdot M} = \Lexp {g\cdot M}$.
\end{proposition}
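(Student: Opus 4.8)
The plan is to show the two Orlicz spaces coincide by exhibiting, for each direction, an exponent-type inequality that lets a finite Luxemburg norm on one side control a finite norm on the other. Write $f = e_M(U)$ and $g = e_M(V)$ with $U,V \in \sdomain M$. The space $\Lexp{f\cdot M}$ consists of functions $\phi$ such that $\gaussint{(\cosh-1)(\alpha\phi(x))f(x)}{x} < \infty$ for some $\alpha>0$, and similarly for $g\cdot M$; so it suffices to prove that if this integral is finite for $f\cdot M$ (for some $\alpha$) then a corresponding integral is finite for $g\cdot M$ (for a possibly smaller $\alpha$), and vice versa by symmetry. Since $g\cdot M = (g/f)\,f\cdot M$, everything reduces to controlling the Radon-Nikodym factor $g/f = \euler^{(V-U) - (K_M(V)-K_M(U))}$, i.e. to the fact that $V-U \in \Bspace M$ together with $U,V$ lying in the \emph{open} set $\sdomain M$.

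The key step is a three-factor H\"older argument. Because $U,V \in \sdomain M$ and $\sdomain M$ is open in $\Lexp M$, there is $\beta > 1$ such that $\beta V \in \sdomain M$ and also $U + s(V-U) \in \sdomain M$ for $s$ in a neighbourhood of $[0,1]$; in particular $\euler^{\gamma(V-U)} \in L^1(M)$ for some $\gamma > 1$ (this is exactly the reasoning used in step (1)--(2) of the proof of Prop.~\ref{prop:SMtoLlogL}). Now estimate, for a test function $\phi$ with $\gaussint{(\cosh-1)(\alpha\phi)f}{x} < \infty$,
\begin{equation*}
  \gaussint{(\cosh-1)(\alpha'\phi(x))\,g(x)}{x}
  = \gaussint{(\cosh-1)(\alpha'\phi(x))\,\frac{g(x)}{f(x)}\,f(x)}{x},
\end{equation*}
and apply H\"older with three conjugate exponents $p,q,r$ to the product $(\cosh-1)(\alpha'\phi)\cdot(g/f)\cdot 1$ against the measure $f\cdot M$. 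Using the power bound $(\cosh-1)^{p}(u) \le 2^{-(p-1)}(\cosh-1)(2^{p-1}u)$ — available, for $p$ of the form $2^k$, from \eqref{eq:simple} and its iterate, and for general $p$ by monotonicity after passing to the next power of two — the first factor contributes a finite integral of the form $\gaussint{(\cosh-1)(2^{p-1}\alpha'\phi)f}{x}$, which is finite provided $\alpha'$ is chosen small enough that $2^{p-1}\alpha' \le \alpha$ (here one also uses that $(\cosh-1)$ integrals scale controllably, exactly as in the $\Delta_2$-type manipulations earlier). The second factor contributes $\gaussint{(g/f)^q f}{x} = \gaussint{\euler^{q((V-U)-\mathrm{const})}f}{x}$, which is finite once $q$ is taken close enough to $1$ that $q(V-U)$ still generates a density integrable against $f\cdot M$; and the third factor is just $\gaussint{f}{x} = 1$. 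Symmetry in $f$ and $g$ then gives the reverse inclusion, hence equality of the two spaces.

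The main obstacle I anticipate is the bookkeeping of exponents: one must simultaneously keep $q$ (and $r$) so close to $1$ that the Radon-Nikodym factor and the constant $1$ remain integrable with respect to $f\cdot M$ — which forces $p$ to be large — while ensuring that $2^{p-1}\alpha'$ can still be made $\le \alpha$ by shrinking $\alpha'$; since $\alpha'>0$ is at our disposal this is not a genuine tension, but it needs to be laid out carefully. A cleaner packaging, which I would actually adopt, is to prove the intermediate lemma that $\euler^{W}$ has finite $f\cdot M$-moments of some order $>1$ whenever $W \in \Bspace M$ and $f \in \maxexp M$ — equivalently that $g/f \in L^{1+\epsilon}(f\cdot M)$ — and then the space identity follows from a single two-factor H\"older step combined with \eqref{eq:simple}. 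This intermediate lemma is itself just the openness of $\sdomain M$ in $\Lexp M$ re-expressed, using that $K_M$ is finite and continuous there, so no new ideas beyond those already deployed for Prop.~\ref{prop:SMtoLlogL} are required; it is purely a matter of assembling them in the right order.
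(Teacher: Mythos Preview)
Your argument is essentially correct, but it is organized differently from the paper's. The paper does \emph{not} compare $\Lexp{f\cdot M}$ with $\Lexp{g\cdot M}$ directly: instead it shows, for each fixed $f=\euler^{U-K_M(U)}\in\maxexp M$, that $\Lexp M=\Lexp{f\cdot M}$, and the statement follows by transitivity. The tool is Young's inequality in additive form, $ab\le\frac1p a^{p}+\frac1q b^{q}$, applied to the product $(\cosh-1)(\alpha V)\cdot f$ (respectively $(\cosh-1)(\alpha V)\cdot f^{-1}$) with $p=2k$, together with the power bound \eqref{eq:simple}. The only side condition needed is then that $\frac{2k}{2k-1}U$ lie in the domain of $Z_M$ for some $k$, which is immediate from openness of $\sdomain M$. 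Your route---H\"older against $f\cdot M$ with the ratio $g/f$ as the second factor---uses the same power bound \eqref{eq:simple} but trades the additive Young step for a multiplicative one, at the cost of having to establish the subsidiary fact $g/f\in L^{1+\epsilon}(f\cdot M)$. That fact is true (it is one of the Portmanteau equivalences) and your sketch of it via openness of $\sdomain M$ is right; the paper's route simply avoids isolating it.

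One genuine correction: your ``intermediate lemma'' as stated---that $\euler^{W}$ has $f\cdot M$-moments of order $>1$ whenever $W\in\Bspace M$---is false. Take $f=1$ and $W(x)=x_1^2-1\in\Bspace M$; then $\int\euler^{(1+\epsilon)W}\,M=\infty$ for every $\epsilon>0$. What you actually need, and what your ``equivalently'' clause correctly states, is only the case $W=V-U$ with \emph{both} $U,V\in\sdomain M$: then $\int(g/f)^{1+\epsilon}f\,M$ is, up to a constant, $\int\euler^{(1+\epsilon)V-\epsilon U}\,M$, and $(1+\epsilon)V-\epsilon U\to V\in\sdomain M$ as $\epsilon\downarrow0$, so openness of $\sdomain M$ gives finiteness for small $\epsilon$. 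Rewrite the lemma with this hypothesis and the proof goes through.
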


\begin{proof}
Given any $f \in \maxexp M$, with $f = \euler^{U-K_M(U)}$ and $U \in \sdomain M$, and any $V \in \Lexp M$, we have from Fenchel-Young inequality and Eq. \eqref{eq:simple} that
\begin{multline*}
 \gaussint {(\cosh-1)(\alpha V(x)) f(x)} x \le \\ \frac1{2^{k+1}k}\gaussint {(\cosh-1)(2k\alpha V(x))} x + \\ \frac{2k-1}{2k} Z_M(U)^{\frac{2k}{2k-1}}\gaussint {\expof{\frac{2k}{2k-1}U}} x \ .
\end{multline*}
If $k$ is such that $\frac{2k}{2k-1}U \in \sdomain M$, one sees that $V \in \Lexp {f\cdot M}$. We have proved that $\Lexp M \subset \Lexp{f \cdot M}$.

Conversely, 
\begin{multline*}
 \gaussint {(\cosh-1)(\alpha V(x))} x = \gaussint {(\cosh-1)(\alpha V(x)) f^{-1}(x)f(x)} x \le \\ \frac1{2^{k+1}k}\gaussint {(\cosh-1)(2k\alpha V(x))f(x)} x + \\ \frac{2k-1}{2k} Z_M(U)^{\frac{1}{2k-1}}\gaussint {\expof{\frac{1}{2k-1}U}} x \ .
\end{multline*}
If $\frac1{2k-1} U \in \sdomain M$, one sees that $V \in \Lexp {f\cdot M}$ implies $V \in \Lexp M$. \qed
\end{proof}

The affine manifold is defined as follows. For each $f \in \maxexp M$, we define the Banach space
\begin{equation*}
  \Bspace f = \setof{U \in \Lexp {f\cdot M}}{ \expectat {f\cdot M} U = 0} = \setof{U \in \Lexp {M}}{ \expectat {M} {Uf} = 0} \ , 
\end{equation*}
and the chart
\begin{equation*}
  s_f \colon \maxexp M \ni g \mapsto \log\frac gf - \expectat {f\cdot M} {\log\frac gf} \ .
\end{equation*}

It is easy to verify the following statement, which defines the \emph{exponential affine manifold}. Specific properties related with the Gaussian space are discussed in the next Sec. \ref{sec:mollifiers} and space derivatives in Sec. \ref{sec:orlicz-sobolev}.

\begin{proposition}\label{prop:affinemanifold}
  The set of charts $s_f \colon \maxexp M \to \Bspace f$ is an affine atlas of global charts on $\maxexp M$. 
\end{proposition}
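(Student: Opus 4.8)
The plan is to verify the two defining properties of an affine atlas of global charts: that each $s_f$ is a bijection of $\maxexp M$ onto an open subset of $\Bspace f$, and that the transition maps $s_g\circ s_f^{-1}$ are affine homeomorphisms. The ``common domain'' requirement is automatic, since every $s_f$ is defined on all of $\maxexp M$.

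\emph{Bijectivity.} The crucial point is the identity $\maxexp{f\cdot M}=\maxexp M$ for every $f\in\maxexp M$, which makes $s_f$ the chart of the maximal exponential model built on the base measure $f\cdot M$. Proposition \ref{prop:tangentspaces} already gives $\Lexp{f\cdot M}=\Lexp M$ with equivalent norms, so $\Bspace f$ and the cumulant functional $K_{f\cdot M}$ live on one and the same Banach space, and $\sdomain{f\cdot M}$, the interior of the proper domain of $K_{f\cdot M}$, is open and contains the open unit ball of $\Bspace f$, exactly as for $\sdomain M$ in the discussion preceding Prop. \ref{prop:SMtoLlogL}. It then remains to see that $g\in\maxexp M$ forces $s_f(g)=\log(g/f)-\expectat{f\cdot M}{\log(g/f)}\in\sdomain{f\cdot M}$, i.e.\ that $t\mapsto\expectat{f\cdot M}{(g/f)^t}=\expectat M{g^{t}f^{1-t}}$ is finite on an open interval containing $[0,1]$. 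For $t\in[0,1]$ this is Hölder's inequality together with $f,g\in L^1(M)$; for $t$ slightly outside $[0,1]$ one writes $g=\euler^{V-K_M(V)}$, $f=\euler^{U-K_M(U)}$ with $U,V\in\sdomain M$ and applies Hölder with exponents tuned so that the relevant multiples of $V$ and of $U$ still fall in the open set $\sdomain M$ — the same estimate used in steps 1--2 of the proof of Prop. \ref{prop:SMtoLlogL}. By symmetry $\maxexp{f\cdot M}\subseteq\maxexp M$ as well, so the two models coincide, and $s_f$ is the bijection $g\mapsto\log(g/f)-\expectat{f\cdot M}{\log(g/f)}$ from $\maxexp M=\maxexp{f\cdot M}$ onto $\sdomain{f\cdot M}$, with inverse $U\mapsto\euler^{U-K_{f\cdot M}(U)}f$.

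\emph{Transition maps.} Fix $f,g\in\maxexp M$. From $s_f^{-1}(U)=\euler^{U-K_{f\cdot M}(U)}f$ one computes, the normalising constant $K_{f\cdot M}(U)$ cancelling,
\begin{align*}
  s_g\bigl(s_f^{-1}(U)\bigr) &= \Bigl(U+\log\tfrac{f}{g}\Bigr)-\expectat{g\cdot M}{U+\log\tfrac{f}{g}} \\
  &= \etransport{f}{g}U+\Bigl(\log\tfrac{f}{g}-\expectat{g\cdot M}{\log\tfrac{f}{g}}\Bigr)\ ,
\end{align*}
where $\etransport{f}{g}U=U-\expectat{g\cdot M}{U}$. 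The translation vector belongs to $\Bspace g$, and $\etransport{f}{g}$ maps $\Bspace f$ continuously into $\Bspace g$ because $\expectat{g\cdot M}{\cdot}=\expectat M{g\,\cdot}$ is a bounded functional on $\Lexp M$ by the duality with $g\in\LlogL M$; the inverse transition is $V\mapsto\etransport{g}{f}V+(\log\tfrac{g}{f}-\expectat{f\cdot M}{\log\tfrac{g}{f}})$, of the same form, so $\etransport{f}{g}$ is a Banach-space isomorphism and the transition maps are affine homeomorphisms.

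The main obstacle is the identity $\maxexp{f\cdot M}=\maxexp M$: this is the one genuinely non-formal ingredient, resting on the Hölder/integrability estimates that keep the recentred log-likelihood ratio inside the open set $\sdomain{f\cdot M}$ (equivalently, on transitivity of the relation ``connected to $M$ through an open one-dimensional exponential model''). Granting it, the openness of $\sdomain{f\cdot M}$, the affine form of the transition maps, and their bicontinuity all follow from Prop. \ref{prop:tangentspaces}, the duality between $\Lexp M$ and $\LlogL M$, and the elementary computation above.
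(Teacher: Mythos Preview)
The paper does not actually prove this proposition; it simply says the statement is ``easy to verify'' and points to \cite{MR3126029} for the affine-manifold construction. Your argument supplies precisely the expected verification and is correct: the non-trivial ingredient is $\maxexp{f\cdot M}=\maxexp M$, which you rightly trace to the open-exponential-arc characterisation (the Portmanteau theorem of \cite{MR3474821}, invoked by the paper itself in the next subsection), and the computation of the transition map as $U\mapsto \etransport{f}{g}U + s_g(f)$ together with the continuity of $\etransport{f}{g}$ via the $\Lexp M$--$\LlogL M$ duality is exactly the standard one. One small remark: the one-parameter check $t\mapsto\expectat M{g^{t}f^{1-t}}<\infty$ on an open interval around $[0,1]$ is the exponential-arc condition, not literally the statement that $s_f(g)$ lies in the \emph{interior} $\sdomain{f\cdot M}$; the passage from the former to the latter is precisely the content of the Portmanteau equivalences you invoke in your closing paragraph, so the argument is complete once that reference is granted.
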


On each fiber $S_p\maxexp M = B_p$ of the statistical bundle the covariance $(U,V) \mapsto \expectat M {UV} = \scalarat p U V$ provides a natural metric. In that metric the natural gradient of a smooth function $F \colon \maxexp M \to \reals$ is defined by 
\begin{equation*}
  \derivby t F(p(t)) = \scalarat {p(t)} {\Grad F(p(t))}{Dp(t)} \ ,
\end{equation*}
where $t \mapsto p(t)$ is a smooth curve in $\maxexp M$ and $Dp(t) = \derivby t \log p(t)$ is the expression of the velocity.

\subsection{Translations and mollifiers}
\label{sec:mollifiers}

In this section, we start to discuss properties of the exponential affine manifold of Prop. \ref{prop:affinemanifold} which depend on the choice of the Gaussian space as base probability space.

Because of the lack of norm density of the space of infinitely differentiable functions with compact support $\Cinfcs {\reals^n}$ in the exponential space $\Lexp M$, we introduce the following classical the definition of Orlicz class.

\begin{definition}
We define the \emph{exponential class}, $\Cexp M$, to be the closure of $\Ccs{\reals^n}$ in the exponential space $\Lexp M$.  
\end{definition}

We recall below the characterization of the exponential class.

\begin{proposition}\label{prop:d}
 Assume $f \in \Lexp M$ and write $f_R(x) = f(x)(\absoluteval x > R)
$. The following conditions are equivalent: 
\begin{enumerate}
\item\label{item:d1}  The real function $\rho \mapsto \int (\cosh-1)(\rho f(x)) \ M(x)dx$ is finite for all $\rho > 0$.
\item\label{item:d3}  $f \in C^{\cosh-1}(M)$.
\item\label{item:d2}  $\lim_{R \to \infty} \normat {\Lexp M} {f_R} = 0$.
\end{enumerate}
\end{proposition}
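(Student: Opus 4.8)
The plan is to establish the cycle of implications \ref{item:d1} $\Rightarrow$ \ref{item:d3} $\Rightarrow$ \ref{item:d2} $\Rightarrow$ \ref{item:d1}, which is the standard route for characterizing the Orlicz class inside an Orlicz space. The key facts I would invoke are the $\Delta_2$-type failure of $(\cosh-1)$ being irrelevant here, and instead the behaviour of truncations together with bounded convergence where it \emph{does} hold.

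For \ref{item:d1} $\Rightarrow$ \ref{item:d2}: assume the modular $\rho \mapsto \int (\cosh-1)(\rho f)\,M\,dx$ is finite for every $\rho>0$. Fix $\rho>0$; then $(\cosh-1)(2\rho f) \in L^1(M)$, and since $(\cosh-1)(2\rho f_R) \le (\cosh-1)(2\rho f)$ pointwise with $(\cosh-1)(2\rho f_R(x)) \to 0$ for a.e.\ $x$ (because $f_R(x)=0$ eventually as $R\to\infty$ for each fixed $x$), dominated convergence gives $\int (\cosh-1)(2\rho f_R)\,M\,dx \to 0$. Hence for $R$ large this modular is $\le 1$, which by definition of the Luxemburg norm means $\normat{\Lexp M}{f_R} \le 1/(2\rho)$; since $\rho$ was arbitrary, $\normat{\Lexp M}{f_R}\to 0$.

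For \ref{item:d2} $\Rightarrow$ \ref{item:d3}: write $f = (f - f_R) + f_R$. The function $f - f_R = f\cdot(\absoluteval x \le R)$ is a bounded-support $L^\infty(M)$ function — but to land in $\Cexp M$ we need it approximable by $\Ccs{\reals^n}$; I would note that any bounded measurable function supported in the ball $\Omega_R$ lies in the $\Lexp M$-closure of $\Ccs{\reals^n}$ (approximate in $L^\infty$, hence in every Orlicz norm, by continuous compactly supported functions via Lusin's theorem or a standard mollification on the ball). Combined with $\normat{\Lexp M}{f_R}\to 0$, this exhibits $f$ as an $\Lexp M$-limit of elements of $\Ccs{\reals^n}$, so $f\in\Cexp M$. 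Finally \ref{item:d3} $\Rightarrow$ \ref{item:d1}: if $f = \lim_n g_n$ in $\Lexp M$ with $g_n \in \Ccs{\reals^n}$, then for any $\rho>0$ choose $n$ with $\normat{\Lexp M}{f - g_n} \le 1/(2\rho)$, so $\int(\cosh-1)(2\rho(f-g_n))\,M\,dx \le 1$; since $g_n$ is bounded, $\int (\cosh-1)(2\rho g_n)\,M\,dx < \infty$; and by convexity $(\cosh-1)(\rho f) \le \tfrac12(\cosh-1)(2\rho(f-g_n)) + \tfrac12(\cosh-1)(2\rho g_n)$, so the modular at $\rho$ is finite.

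The main obstacle — really the only nonroutine point — is the step inside \ref{item:d2} $\Rightarrow$ \ref{item:d3} that a bounded function with support in a ball belongs to the closure of $\Ccs{\reals^n}$ in the \emph{exponential} norm; one must be careful that $\Lexp M$ is not separable, but this localized statement is fine because on a fixed ball the Gaussian weight is bounded above and below, so the $\Lexp M$-norm restricted to $\Omega_R$ is comparable to an ordinary Orlicz norm against Lebesgue measure on a bounded set, where continuous compactly supported functions are dense in $L^\infty$-approximation. I would cite \cite[Ch. II]{MR724434} for the underlying Orlicz-space facts and keep the argument short.
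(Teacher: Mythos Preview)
Your arguments for \ref{item:d1}$\Rightarrow$\ref{item:d2} and \ref{item:d3}$\Rightarrow$\ref{item:d1} are fine. The gap is in \ref{item:d2}$\Rightarrow$\ref{item:d3}: you assert that $f-f_R=f\cdot(\avalof x\le R)$ is ``a bounded-support $L^\infty(M)$ function'', and your Lusin/mollification approximation rests entirely on this. But membership in $\Lexp M$ does not make $f$ essentially bounded on balls --- Prop.~\ref{prop:O-inclusions} only gives $f|_{\Omega_R}\in L^a(\Omega_R)$ for each finite $a$ --- so the step does not go through.

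Worse, the implication itself is false as written, so no patch will work. Take $n=1$ and $f(x)=\log(1/\avalof x)$ on $\avalof x<\tfrac12$, zero elsewhere. Near the origin $(\cosh-1)(\alpha f(x))\sim\tfrac12\avalof x^{-\alpha}$, so the modular is finite precisely for $0<\alpha<1$; hence $f\in\Lexp M$ while condition~\ref{item:d1} fails, and by your own \ref{item:d3}$\Rightarrow$\ref{item:d1} so does \ref{item:d3}. Yet $f_R\equiv 0$ once $R>\tfrac12$, so \ref{item:d2} holds trivially. Thus \ref{item:d2} alone cannot imply \ref{item:d3}; what survives is the standard Orlicz-class equivalence \ref{item:d1}$\Leftrightarrow$\ref{item:d3}, with \ref{item:d2} merely a consequence. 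The paper gives no argument here and defers to \cite[Ch.~II]{MR724434} and \cite[Prop.~3]{pistone:2017-GSI2017}; you should check whether those references carry an additional hypothesis --- for instance replacing the spatial tail $f_R$ by the value tail $f\cdot(\avalof f>R)$, for which your cycle \emph{does} close, since then $f-f_R$ is genuinely bounded --- that has been lost in the present statement.
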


\begin{proof} This is well known e.g., see \cite[Ch. II]{MR724434}. A short proof is given in our note \cite[Prop. 3]{pistone:2017-GSI2017}. \qed
\end{proof}

Here we study of the action of translation operator on the exponential space $\Lexp M$ and on the exponential class $\Cexp M$. We consider both translation by a vector, $\tau_h f(x) = f(x - h)$, $h\in\reals^n$, and translation by a probability measure, of convolution, $\mu$, $\tau_\mu f(x) = \int f(x-y) \ \mu(dy) = f*\mu(x)$. A small part of this material was published in the conference paper \cite[Prop. 4--5]{pistone:2017-GSI2017}.

\begin{proposition}[Translation by a vector]\
\label{prop:taubyh}  
\begin{enumerate}
\item \label{item:taubyh1} For each $h \in \reals^n$, the translation mapping $\Lexp M \ni f \mapsto \tau_h f$ is linear and bounded from $\Lexp M$ to itself. In particular,
  \begin{equation*}
    \normat {\Lexp M} {\tau_h f} \le 2 \normat {\Lexp M} f \quad \text{if} \quad \absoluteval h \le \sqrt{\log 2} \ .
  \end{equation*}

\item \label{item:taubyh3} For all $g \in \LlogL M$ we have
  \begin{equation*}
    \scalarat M {\tau_h f} g = \scalarat M f {\tau_h^* g}, \quad \tau_h^* g(x) = \euler^{-h \cdot x - \frac12 \absoluteval h^2} \tau_{-h}g(x) \ ,
  \end{equation*}
and $\absoluteval h \le \sqrt {\log 2}$ implies $\normat {\Lexp M)^*} {\tau^*_h g} \le 2 \normat {\Lexp M)^*} g$. The translation mapping $h \mapsto \tau_h^*g$ is continuous in $\LlogL M$.

\item \label{item:taubyh2} If $f \in \Cexp M$ then $\tau_h f \in \Cexp M$, $h \in \reals^n$, and the mapping $\reals^n \colon h \mapsto \tau_h f$ is continuous in $\Lexp M$. 
  \end{enumerate}
\end{proposition}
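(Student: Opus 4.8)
\emph{Part~\ref{item:taubyh1}.} I would prove the three parts by, respectively, a change-of-variables estimate on the Luxemburg modular, a duality argument, and an approximation argument. For the first, linearity is obvious, so the content is the norm bound. Everything rests on the identity $M(x+h)=\euler^{-h\cdot x-\frac12\avalof h^2}M(x)$, which after the substitution $x\mapsto x+h$ gives, for every $\rho>0$, $\gaussint{(\cosh-1)(\tau_h f(x)/\rho)}{x}=\int(\cosh-1)(f(y)/\rho)\,\euler^{-h\cdot y-\frac12\avalof h^2}M(y)\,dy$. Since the weight is unbounded I would take $\rho=2\rho_0$ with $\rho_0=\normat{\Lexp M}{f}$, split the integrand by the Cauchy--Schwarz inequality in $L^2(M)$, and invoke the self-improvement identity~\eqref{eq:simple}: it gives $\int(\cosh-1)^2(f(y)/(2\rho_0))\,M(y)\,dy\le\tfrac12\int(\cosh-1)(f(y)/\rho_0)\,M(y)\,dy\le\tfrac12$, while completing the square gives $\int\euler^{-2h\cdot y-\avalof h^2}M(y)\,dy=\euler^{\avalof h^2}$. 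The product of the two square roots is $2^{-1/2}\euler^{\avalof h^2/2}$, which is $\le1$ precisely when $\avalof h\le\sqrt{\log 2}$, proving $\normat{\Lexp M}{\tau_h f}\le 2\normat{\Lexp M}{f}$ in that range. For an arbitrary $h$ one repeats the computation with $\rho=c\rho_0$ and the convexity bound $(\cosh-1)(u/c)\le(2/c)(\cosh-1)(u/2)$, valid for $c\ge 2$, obtaining $\normat{\Lexp M}{\tau_h f}\le c(h)\,\normat{\Lexp M}{f}$ with $c(h)=\max\{2,\sqrt 2\,\euler^{\avalof h^2/2}\}$; hence $\tau_h$ is a bounded operator on $\Lexp M$ for every $h$.

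\emph{Part~\ref{item:taubyh3}.} The adjoint identity is again the substitution $x\mapsto x+h$, now inside the duality pairing: $\scalarat M{\tau_h f}{g}=\int f(y)\,g(y+h)\,\euler^{-h\cdot y-\frac12\avalof h^2}M(y)\,dy=\scalarat M{f}{\tau_h^* g}$ with $\tau_h^* g(x)=\euler^{-h\cdot x-\frac12\avalof h^2}\tau_{-h}g(x)$; the rearrangement is legitimate because $\int\avalof{\tau_h f}\,\avalof{g}\,M\le 2\normat{\Lexp M}{\tau_h f}\,\normat{\LlogL M}{g}<\infty$ by the separating-duality inequality and Part~\ref{item:taubyh1}. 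For $\normat{\Lexp M}{f}\le 1$ and $\avalof h\le\sqrt{\log 2}$ one then gets $\avalof{\scalarat M{f}{\tau_h^* g}}=\avalof{\scalarat M{\tau_h f}{g}}\le\normat{\Lexp M}{\tau_h f}\,\normat{(\Lexp M)^*}{g}\le 2\normat{(\Lexp M)^*}{g}$, and the supremum over such $f$ is $\normat{(\Lexp M)^*}{\tau_h^* g}$, so $\normat{(\Lexp M)^*}{\tau_h^* g}\le 2\normat{(\Lexp M)^*}{g}$; since the dual (Orlicz) norm is equivalent to the Luxemburg norm and $(\cosh-1)_*$ satisfies the $\Delta_2$ bound, finiteness of this norm forces $\tau_h^* g\in\LlogL M$. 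For the continuity of $h\mapsto\tau_h^* g$ in $\LlogL M$: the operators $\tau_h^*$, $\avalof h\le R$, are uniformly bounded by the estimate just proved, and $\Ccs{\reals^n}$ is norm-dense in $\LlogL M$ as a consequence of $\Delta_2$, so a three-$\varepsilon$ argument reduces the claim to $g\in\Ccs{\reals^n}$; for such $g$ one has $\tau_h^* g\to\tau_{h_0}^* g$ pointwise as $h\to h_0$, and for $h$ in a neighbourhood of $h_0$ all the functions $\tau_h^* g$ are dominated by one fixed bounded function with compact support, so $\Delta_2$-bounded convergence in $\LlogL M$ applies.

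\emph{Part~\ref{item:taubyh2}.} Stability of $\Cexp M$ is immediate from Part~\ref{item:taubyh1}: if $g_k\in\Ccs{\reals^n}$ with $g_k\to f$ in $\Lexp M$, then $\tau_h g_k\in\Ccs{\reals^n}$ and $\normat{\Lexp M}{\tau_h f-\tau_h g_k}=\normat{\Lexp M}{\tau_h(f-g_k)}\le c(h)\,\normat{\Lexp M}{f-g_k}\to 0$. For the continuity of $h\mapsto\tau_h f$ on $\Lexp M$ I would proceed by three reductions: (i)~the identity $\tau_h f-\tau_{h_0}f=\tau_{h_0}(\tau_{h-h_0}f-f)$ and the operator bound reduce the claim to continuity at $h=0$; (ii)~a three-$\varepsilon$ argument, using the locally uniform operator bound from Part~\ref{item:taubyh1}, reduces that to $f\in\Ccs{\reals^n}$; (iii)~for $f\in\Ccs{\reals^n}$, the difference $\tau_k f-f$ is supported in a fixed compact set $K$ for $\avalof k\le 1$ and $\normat{\infty}{\tau_k f-f}\to 0$ by uniform continuity, while $\normat{\Lexp M}{u}\le\normat{\infty}{u}\,\normat{\Lexp M}{\mathbf 1_K}$ for every $u$ supported in $K$, and $\mathbf 1_K\in L^\infty(M)\subset\Lexp M$ by Proposition~\ref{prop:O-inclusions}; hence $\normat{\Lexp M}{\tau_k f-f}\to 0$.

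\emph{Main obstacle.} All the real work sits in Part~\ref{item:taubyh1}. Translation does not preserve the Gaussian base measure, the Radon--Nikodym factor $\euler^{-h\cdot x-\frac12\avalof h^2}$ is unbounded, and boundedness of $\tau_h$ on $\Lexp M$ is therefore not a formality --- it would actually fail for a Young function growing substantially faster than $\cosh$. The device that closes the estimate, and at the same time produces the sharp threshold $\sqrt{\log 2}$, is to combine the Cauchy--Schwarz splitting of the unbounded weight with the quadratic self-improvement identity~\eqref{eq:simple}, which is exactly the place where the exponential growth of $\cosh-1$ is used. Once Part~\ref{item:taubyh1} is available, Parts~\ref{item:taubyh3} and~\ref{item:taubyh2} are soft: duality, norm-density of $\Ccs{\reals^n}$ in the mixture space, $\Delta_2$-bounded convergence, and the elementary fact that on a fixed compact set the Luxemburg norm is controlled by the supremum norm.
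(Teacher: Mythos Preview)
Your proof is correct and follows essentially the same line as the paper's: the same change of variables, Cauchy--Schwarz splitting of the Radon--Nikodym factor, and the self-improvement identity~\eqref{eq:simple} for Part~\ref{item:taubyh1}; the same duality computation and density/three-$\varepsilon$ approximation arguments for Parts~\ref{item:taubyh3} and~\ref{item:taubyh2}. The only cosmetic differences are that for large $\avalof h$ the paper iterates via the semi-group identity $\tau_{h_1+h_2}=\tau_{h_1}\tau_{h_2}$ instead of your direct convexity estimate, and for the stability $\tau_h(\Cexp M)\subset\Cexp M$ the paper invokes the modular characterisation of Proposition~\ref{prop:d} rather than your (slightly simpler) closure argument.
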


    \begin{proof}
    \begin{enumerate}

\item Let us first prove that $\tau_h f \in \Lexp M$. Assume $\normat {\Lexp M} f \le 1$. For each $\rho > 0$, writing $\Phi = \cosh-1$,
      \begin{multline*}
        \int \Phi(\rho \tau_h f(x))\ M(x)dx = \int \Phi(\rho f(x-h))\ M(x)dx = \\ \int \Phi(\rho f(z))\ M(z+h)dz = \euler^{ - \frac12 \absoluteval h^2} \int \euler^{- z\cdot h} \Phi(\rho f(z))\  M(z)dz \ ,
      \end{multline*}
hence, using H\"older inequality and the inequality in Eq. \eqref{eq:simple},
\begin{multline}\label{eq:doublerho}
  \int \Phi(\rho\tau_h f(x)) \ M(x)dx \le \\ \euler^{ - \frac12 \absoluteval h^2} \left(\int \euler^{- 2z\cdot h}\ M(z)dz\right)^{\frac12} \left(\int \Phi^2(\rho f(z))\ M(z)dz\right)^{\frac12} \le \\ \frac1{\sqrt 2}\euler^{\frac{\absoluteval h^2}2} \left(\int \Phi(2\rho f(z)) M(z)\ dz\right)^{\frac12} \ .
\end{multline}
Take $\rho=1/2$, so that $\expectat M {\Phi\left(\tau_h \frac12 f(x)\right)} \le \frac1{\sqrt 2}\euler^{\frac{\absoluteval h^2}2}$, which implies $f \in \Lexp M$. Moreover, $\normat {\Lexp M}{\tau_h f} \le 2$ if $\frac1{\sqrt 2}\euler^{\frac{\absoluteval h^2}2} \le 1$.

The semi-group property $\tau_{h_1+h_2} f = \tau_{h_1} \tau_{h_2} f$ implies the boundedness for all $h$.

\item The computation of $\tau^*_h$ is
\begin{align*}
  \scalarat M {\tau_h f} g &= \int f(x-h)g(x) \ M(x)dx \\
&= \int f(x) g(x+h)M(x+h) \ dx \\
&= \int f(x) \euler^{-h\cdot x - \frac12\absoluteval h^2} \tau_{-h}g(x) \ M(x)dx \\
&= \scalarat M f {\tau^*_h g} \ .
\end{align*}
Computing Orlicz norm of the mixture space, we find
\begin{multline*}
\normat {(\Lexp M)^*} {\tau^*_h g} = \sup\setof{\scalarat M f {\tau^*_h g}}{\normat {\Lexp M} f \le 1}  = \\ \sup\setof{\scalarat M {\tau_h f} g}{\normat {\Lexp M} f \le 1} \ .
\end{multline*}
From the previous item we know that $\absoluteval h \le \sqrt {\log 2}$ implies
\begin{multline*}
  \scalarat M {\tau_h f} g \le \normat {\Lexp M} {\tau_h f} \normat {(\Lexp M)^*} g \le \\ 2 \normat {\Lexp M} {f} \normat {(\Lexp M)^*} g \ , 
\end{multline*}
hence $\normat {(\Lexp M)^*} {\tau^*_h g} \le  2 \normat {\Lexp M)^*} g$.

Consider first the continuity a 0. We have for $\avalof h \le \sqrt{\log 2}$ and any $\phi \in \Cinfcs{\reals^n}$ that
\begin{multline*}
  \normat{(\Lexp M)^*}{\tau_hg - g} \le \\ \normat{(\Lexp M)^*}{\tau_h (g-\phi)} + \normat{(\Lexp M)^*}{\tau_h \phi - \phi} + \normat{(\Lexp M)^*}{\phi - g} \le \\ 3 \normat{(\Lexp M)^*}{g - \phi} + \sqrt 2 \normat {\infty}{\tau_h \phi - \phi} \ .  
\end{multline*}
The first term in the RHS is arbitrary small because of the density of $\Cinfcs{\reals^n}$ in $\LlogL M$, while the second term goes to zero as $h \to 0$ for each $\phi$.

The general case follows from the boundedness and the semi-group property.

\item If $f \in \Cexp M$, then , by Prop. \ref{prop:d}, the RHS of Eq. \eqref{eq:doublerho} is finite for all $\rho$, which in turn implies that $\tau_h f \in \Cexp M$ because of Prop. \ref{prop:d}.1. Other values of $h$ are obtained by the semi-group property.

  The continuity follows from the approximation argument, as in the previous item.
       \end{enumerate}
\qed  \end{proof}

  We denote by $\probabilities$ the convex set of probability measures on $\reals^n$ and call \emph{weak convergence} the convergence of sequences in the duality with $C_b(\reals^n)$. In the following proposition we denote by $\probabilities_{\euler}$ the set of probability measures $\mu$ such that $h \mapsto \euler^{\frac12 \absoluteval h^2}$ is integrable. For example, this is the case when $\mu$ is Gaussian with variance $\sigma^2I$, $\sigma^2 < 1$, or when $\mu$ has a bounded support. Weak convergence in $\probabilities_e$ means $\mu_n \to \mu$ weakly and $\int \euler^{\frac12 \absoluteval h^2} \ \mu_n(dh) \to \int \euler^{\frac12 \absoluteval h^2} \ \mu(dh)$.  Note that we study here convolutions for the limited purpose of deriving the existence of smooth approximations obtained by \emph{mollifiers}, see \cite[108--109]{MR2759829}.

\begin{proposition}[Translation by a probability]
\label{prop:taubymu} Let $\mu \in \probabilities_{\euler}$.   
\begin{enumerate}
\item \label{item:taubymu1}The mapping $f \mapsto \tau_\mu f$ is linear and bounded from $\Lexp M$ to itself. If, moreover, $\int \euler^{\frac12\avalof{ h^2}}\ \mu(dh) \le \sqrt {2}$, then $\normat {\Lexp M} {\tau_\mu f} \le 2 \normat {\Lexp M} f$.
\item \label{item:taubymu2} If $f \in \Cexp M$ then $\tau_\mu f \in \Cexp M$. The mapping $\probabilities \colon \mu \mapsto \tau_\mu f$ is continuous at $\delta_0$ from the weak convergence to the $\Lexp M$ norm.
  \end{enumerate}
\end{proposition}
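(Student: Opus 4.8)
The plan is to reduce everything to the single–vector estimate \eqref{eq:doublerho} from the proof of Proposition~\ref{prop:taubyh}, using Jensen's inequality to average over the translation parameter.

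For item~\ref{item:taubymu1}, write $\Phi=\cosh-1$. Linearity is immediate. Since $\Phi$ is convex with $\Phi(0)=0$, Jensen against $\mu$ gives $\Phi(\rho\,\tau_\mu f(x))\le\int\Phi(\rho f(x-h))\,\mu(dh)$ for every $\rho>0$; integrating against $M(x)\,dx$, applying Fubini and then \eqref{eq:doublerho} to each vector $h$,
\begin{multline*}
  \gaussint{\Phi(\rho\,\tau_\mu f(x))}{x}\le\int\Bigl(\gaussint{\Phi(\rho\,\tau_h f(z))}{z}\Bigr)\mu(dh)\\
  \le\frac1{\sqrt2}\Bigl(\int\euler^{\frac12\avalof h^2}\,\mu(dh)\Bigr)\Bigl(\gaussint{\Phi(2\rho f(z))}{z}\Bigr)^{1/2},
\end{multline*}
the $h$–integral being finite because $\mu\in\probabilities_{\euler}$ (applying this first to $\avalof f$ also shows $\tau_\mu f$ is well defined a.e.). Taking $\rho=\tfrac12$ and $\normat{\Lexp M}{f}\le1$ makes the last factor $\le1$, which gives $\normat{\Lexp M}{\tau_\mu f}\le2$ under the extra hypothesis $\int\euler^{\frac12\avalof h^2}\mu(dh)\le\sqrt2$; for a general $\mu\in\probabilities_{\euler}$ the same display shows that $\tau_\mu$ maps the unit ball of $\Lexp M$ into a norm–bounded set, so $\tau_\mu$ is bounded. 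Keeping $\rho>0$ arbitrary and using the characterisation of $\Cexp M$ in Proposition~\ref{prop:d}, item~(\ref{item:d1}) (namely $\gaussint{\Phi(\rho g)}{x}<\infty$ for all $\rho$), the same inequality gives $\tau_\mu f\in\Cexp M$ whenever $f\in\Cexp M$, proving the first claim of item~\ref{item:taubymu2}.

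For the continuity at $\delta_0$, take $\mu_n\to\delta_0$ in $\probabilities_{\euler}$, i.e. $\mu_n\to\delta_0$ weakly and $C_n:=\int\euler^{\frac12\avalof h^2}\mu_n(dh)\to1$. Fix $f\in\Cexp M$ and $\varepsilon>0$, and choose $\phi\in\Cinfcs{\reals^n}$ with $\normat{\Lexp M}{f-\phi}<\varepsilon$ (possible because $\Cinfcs{\reals^n}$ is $\Lexp M$–dense in $\Ccs{\reals^n}$, hence in $\Cexp M$). Split $\tau_{\mu_n}f-f$ as $\tau_{\mu_n}(f-\phi)+(\tau_{\mu_n}\phi-\phi)+(\phi-f)$. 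The outer two terms are bounded by $\varepsilon\sup_n\normat{\Lexp M}{\tau_{\mu_n}}$ and by $\varepsilon$, and the operator norms are eventually $\le2$ by item~\ref{item:taubymu1} since $C_n\to1<\sqrt2$. For the middle term, $\tau_{\mu_n}\phi\to\phi$ pointwise (weak convergence, $\phi\in C_b(\reals^n)$) and $\avalof{\tau_{\mu_n}\phi-\phi}\le2\normat{\infty}{\phi}$, so — $M$ being a probability measure — dominated convergence gives $\gaussint{\Phi(\rho(\tau_{\mu_n}\phi-\phi))}{x}\to0$ for each $\rho>0$, hence $\normat{\Lexp M}{\tau_{\mu_n}\phi-\phi}\to0$. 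Therefore $\limsup_n\normat{\Lexp M}{\tau_{\mu_n}f-f}\le3\varepsilon$, and $\varepsilon$ was arbitrary.

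The substantive part is the estimate in item~\ref{item:taubymu1}, which is short once \eqref{eq:doublerho} is available. In the continuity argument the one point to watch is the middle term: uniform boundedness together with a.e.\ convergence does force $\Lexp M$–norm convergence over a probability space — which is why a constant dominating function suffices here — and this is not in conflict with the general failure of bounded convergence in $\Lexp M$, which concerns domination by an unbounded element of $\Lexp M$. The other place where the stronger hypothesis enters is the uniform bound on the operator norms $\normat{\Lexp M}{\tau_{\mu_n}}$, which needs $C_n\to1$ and not merely weak convergence.
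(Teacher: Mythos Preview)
Your proof is correct and follows essentially the same route as the paper: Jensen against $\mu$ reduces to the per-vector estimate \eqref{eq:doublerho}, the characterisation of $\Cexp M$ in Proposition~\ref{prop:d} handles the first half of item~\ref{item:taubymu2}, and the $3\varepsilon$ splitting with a $\Cinfcs{\reals^n}$ approximant handles the continuity. The only variation is the middle term: the paper invokes $\normat{\infty}{\tau_\mu\phi-\phi}\to0$ directly, while you argue pointwise convergence plus a constant bound and dominated convergence --- both are valid, and your remark that a \emph{constant} dominating function on a probability space does force $\Lexp M$-convergence (in contrast to an unbounded $\Lexp M$ dominator) is exactly the right observation.
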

    \begin{proof} \ 
    \begin{enumerate}
    \item Let us write $\Phi=\cosh-1$ and note the Jensen's inequality
      \begin{multline*}
        \Phi\left(\rho \tau_\mu f(x)\right) = \Phi\left(\rho \int f(x-h) \ \mu(dh)\right) \le \\
\int \Phi\left(\rho f(x-h)\right) \ \mu(dh) = \int \Phi\left(\rho \tau_h f(x)\right) \ \mu(dh) \ .
      \end{multline*}
By taking the Gaussian expectation of the previous inequality we have, as in the previous item,
\begin{multline}\label{eq:mutranslation}
  \expectat M {\Phi\left(\rho \tau_\mu f\right)} \le 
  \int \gaussint {\Phi\left(\rho f(x-h)\right)} x \ \mu(dh) = \\
  \int \euler^{-\frac12\avalof h^2}\gaussint {\euler^{-h\cdot z} \Phi\left(\rho f(z)\right)} z \ \mu(dh) \le \\
  \frac 1{\sqrt 2} \int \euler^{\frac12\avalof h^2} \ \mu(dh) \ \expectat M {\Phi(2\rho f)} \ .
\end{multline}
If $\normat {\Lexp M} f \le 1$ and $\rho=1/2$, the RHS is bounded, hence $\tau_\mu f \in \Lexp M$. If, moreover, $\int \euler^{\frac12\avalof h^2} \le \sqrt{2}$, then the RHS is bounded by 1, hence $\normat {\Lexp M} {\tau_\mu f} \le 2$.

\item We have found above that for each $\rho > 0$ it holds \eqref{eq:mutranslation}, where the right-end-side if finite for all $\rho$ under the current assumption. It follows from Prop. \ref{prop:d} that $\tau_h f \in \Cexp M$.

To prove the continuity at $\delta_0$, assume $\int \euler^{\frac12\absoluteval h^2} \ \mu(dh) \le \sqrt 2$, which is always feasible if $\mu \to \delta_0$ in $\probabilities_\euler$ weakly. Given $\epsilon > 0$, choose $\phi \in \Cinfcs {\reals^n}$ so that $\normat {\Lexp M} {f - \phi} < \epsilon$. We have
  \begin{multline*}
    \normat {\Lexp M} {\tau_\mu f - f} \le \\ \normat {\Lexp M} {\tau_\mu(f-\phi)} + \normat {\Lexp M} {\tau_\mu \phi - \phi} + \normat {\Lexp M} {\phi - f} \le \\ 3\epsilon + A^{-1} \normat {\infty} {\tau_\mu \phi - \phi} \ ,
  \end{multline*}
where $A = \normat {\Lexp M} 1$. As $\lim_{\mu \to \delta_0} \normat {\infty} {\tau_\mu \phi - \phi} = 0$, see e.g. \cite[III-1.9]{MR1335234}, the conclusion follows.

     \end{enumerate}
\qed  \end{proof}

We use the previous propositions to show the existence of smooth approximations through sequences of mollifiers. A \emph{bump} function is a non-negative function $\omega$ in $C_0^\infty(\reals^n)$ such that $\int \omega(x) \ dx = 1$. It follows that $\int \lambda^{-n} \omega(\lambda^{-1} x) \ dx = 1$, $\lambda > 0$ and the family of mollifiers $\omega_\lambda(dx) = \lambda^{-n} \omega(\lambda^{-1} x)dx$, $\lambda > 0$, converges weakly to the Dirac mass at 0 as $\lambda \downarrow 0$ in $\probabilities_\euler$. Without restriction of generality, we shall assume that the support of $\omega$ is contained in $[-1,+1]^n$.

For each $f \in \Lexp M$ we have
\begin{equation*}
\tau_{\omega_\lambda}(x) =   f \ast \omega_\lambda (x) = \int f(x-y)\lambda^{-n}\omega(\lambda^{-1}y) \ dy = \int_{[-1,+1]^n} f(x - \lambda z)\omega(z) \ dz \ .
\end{equation*}

For each $\Phi$ convex we have by Jensen's inequality that 
\begin{equation*}
  \Phi\left(f \ast \omega_\lambda (x)\right) \le (\Phi\circ f) \ast \omega_\lambda (x)
\end{equation*}
and also
\begin{multline*}
  \int \Phi\left(f \ast \omega_\lambda (x)\right) M(x) \ dx \le \int \int_{[-1,+1]^n} \Phi \circ f(x - \lambda z)\omega(z) \ dz M(x) \ dx = \\
  \int\Phi\circ f(y) \left(\int_{[-1,+1]^n} \expof{-\lambda \scalarof z y - \frac{\lambda^2}2 \absoluteval z^2}\omega(z) \ dz\right) M(y) \ dy \le \\ \int\Phi\circ f(y)  M(y) \ dy \ . 
\end{multline*}
\begin{proposition}[Mollifiers]\label{prop:molli} Let be given a family of mollifiers $\omega_\lambda$, $\lambda > 0$.
For each $f \in \Cexp M$ and for each $\lambda > 0$ the function 
\begin{equation*}
\tau_{\omega_\lambda}f(x) = \int f(x-y) \lambda^{-n} \omega(\lambda^{-1}y) \ dy = f\ast\omega_\lambda(x)
\end{equation*}
belongs to $C^\infty(\reals^n)$. Moreover,
\begin{equation*}
  \lim_{\lambda \to 0} \normat  {\Lexp M} {f\ast\omega_\lambda - f} = 0 \ . 
\end{equation*}
\end{proposition}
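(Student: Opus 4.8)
The plan is to treat the two assertions separately, since they are of quite different natures: smoothness of $f\ast\omega_\lambda$ is ordinary mollification theory and uses only that $f$ is locally Lebesgue-integrable, while the norm convergence is, after one small verification, a direct corollary of Prop.~\ref{prop:taubymu}.

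For the smoothness I would first record that every $f\in\Lexp M$ is locally integrable with respect to Lebesgue measure: by the second part of Prop.~\ref{prop:O-inclusions}, or simply because $M(x)\ge(2\pi)^{-n/2}\euler^{-R^2/2}$ on the ball $\Omega_R$, the restriction of $f$ to each $\Omega_R$ lies in $L^1(\Omega_R)$. Fixing $\lambda>0$ and a compact $K\subset\reals^n$ and writing
\begin{equation*}
  f\ast\omega_\lambda(x)=\int f(y)\,\omega_\lambda(x-y)\ dy \ ,
\end{equation*}
one notes that, since $\operatorname{supp}\omega_\lambda\subseteq[-\lambda,+\lambda]^n$, for $x\in K$ the integrand is supported in the fixed compact set $K'=K+[-\lambda,+\lambda]^n$ on which $f$ is integrable, while $x\mapsto\omega_\lambda(x-y)$ is $C^\infty$ with all partial derivatives bounded uniformly in $y$. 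The dominated-convergence form of the Leibniz integral rule then permits differentiation under the integral to every order, $\partial^\beta(f\ast\omega_\lambda)=f\ast(\partial^\beta\omega_\lambda)$, each a continuous function on the interior of $K$; since $K$ is arbitrary, $f\ast\omega_\lambda\in C^\infty(\reals^n)$. This is the classical mollification argument \cite[108--109]{MR2759829} and I would not spell it out further.

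For the convergence I would observe that $\omega_\lambda(dx)=\lambda^{-n}\omega(\lambda^{-1}x)\,dx$ is a probability measure in $\probabilities_\euler$, since $\int\euler^{\frac12\absoluteval h^2}\,\omega_\lambda(dh)=\int\euler^{\frac{\lambda^2}{2}\absoluteval z^2}\omega(z)\ dz<\infty$, and this integral tends to $1$ as $\lambda\downarrow0$ while $\omega_\lambda\to\delta_0$ weakly; hence $\omega_\lambda\to\delta_0$ in the topology of $\probabilities_\euler$, as was already remarked just before the statement. Since $f\in\Cexp M$, the second part of Prop.~\ref{prop:taubymu} then applies verbatim with $\mu=\omega_\lambda$ and gives both $\tau_{\omega_\lambda}f=f\ast\omega_\lambda\in\Cexp M$ and $\lim_{\lambda\to0}\normat{\Lexp M}{f\ast\omega_\lambda-f}=0$. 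One could equally run the self-contained $3\epsilon$ argument used in the proof of that proposition: choose $\phi\in\Cinfcs{\reals^n}$ with $\normat{\Lexp M}{f-\phi}<\epsilon$ by norm-density of $\Cinfcs{\reals^n}$ in $\Cexp M$; use the boundedness of convolution (the estimate displayed just before this proposition, or the first part of Prop.~\ref{prop:taubymu}) to control $\normat{\Lexp M}{(f-\phi)\ast\omega_\lambda}$ by a fixed multiple of $\epsilon$; and bound $\normat{\Lexp M}{\phi\ast\omega_\lambda-\phi}\le\normat{\Lexp M}{1}\,\normat{\infty}{\phi\ast\omega_\lambda-\phi}\to0$ using uniform continuity of $\phi$. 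I do not anticipate a real obstacle here; the only points asking for attention are the local-integrability input in the smoothness part and the routine check that $\omega_\lambda\to\delta_0$ holds in $\probabilities_\euler$, so that Prop.~\ref{prop:taubymu} may legitimately be invoked.
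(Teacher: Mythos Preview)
Your proposal is correct and follows essentially the same route as the paper: local integrability of $f\in\Lexp M$ gives $f\ast\omega_\lambda\in C^\infty(\reals^n)$ by the standard mollification argument, and the convergence is obtained by checking $\omega_\lambda\to\delta_0$ in $\probabilities_\euler$ and invoking Prop.~\ref{prop:taubymu}\eqref{item:taubymu2}. The paper's proof is simply a terser version of what you wrote.
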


\begin{proof}
  Any function in $\Lexp M$ belongs to $L^1_{\text{loc}}(\reals^n)$, hence
\begin{equation*}
x \mapsto \int f(x-y) \omega_\lambda(y)dy = \int f(z) \omega_\lambda(z-x)dz
\end{equation*}
belongs to $C^\infty(\reals^n)$, see e.g. \cite[Ch. 4]{MR2759829}. Note that $\int \euler^{\absoluteval h^2/2} \omega_\lambda(dh) < + \infty$ and then apply Prop. \ref{prop:taubymu}\eqref{item:taubymu2}. 

\end{proof}

\begin{remark}\ 
  Properties of weighted Orlicz spaces with the $\Delta_2$-property can be sometimes deduced from the properties on the un-weighted spaces by suitable embeddings, but this is not the case for the exponential space. Here are two examples.
\begin{enumerate}
  \item Let $1 \le a < \infty$. The mapping $g \mapsto gM^{\frac1a}$ is an isometry of $L^a(M)$ onto $L^a(\reals^n)$. As a consequence, for each $f \in L^1(\reals^n)$ and each $g \in L^a(M)$ we have $\normat {L^a(M)} {\left[f*(gM^{\frac1a})\right]M^{-\frac1a}} \le \normat {L^1(\reals^n)} f \normat{L^a(M)}{g}$. 
\item The mapping
  \begin{equation*}
    g \mapsto \signof{g}(\cosh - 1)_*^{-1}(M(\cosh-1)_*(g))
  \end{equation*}
is a surjection of $\LlogL{\reals^n}$ onto $\LlogL M$ with inverse
\begin{equation*}
  h \mapsto \signof{h}(\cosh - 1)_*^{-1}(M^{-1}(\cosh-1)_*(f)) \ .
\end{equation*}
It is surjective from unit vectors (for the Luxemburg norm) onto unit vectors.
  \end{enumerate}
\end{remark}

We conclude this section by recalling the following tensor property of the exponential space and of the mixture space, see \cite{MR3365132}.

\begin{proposition} Let us split the components $\reals^n x \mapsto (x_1,x_2) \in \reals^{n_1} \times \reals^{n_2}$ and denote by $M_1$, $M_2$, respectively, the Maxwell densities on the factor spaces.
  \begin{enumerate}
  \item A function $f$ belongs to $\Lexp M$ if and only if for one $\alpha > 0$ the partial integral $x_1 \to \int (\cosh-1)(\alpha f(x_1,x_2)) M(x_2)\ dx_2$ is $M_1$-integrable.  \item A function $f$ belongs to $\LlogL M$ if and only if the partial integral $x_1 \to \int (\cosh-1)_*(f(x_1,x_2)) M(x_2)\ dx_2$ is $M_1$-integrable.  \end{enumerate}
\end{proposition}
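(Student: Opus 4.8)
The plan is to reduce both statements to Tonelli's theorem applied to the product structure of the Gaussian density. Since $M$ is the standard $n$-dimensional Gaussian and the coordinates split as $x=(x_1,x_2)$, the density factorizes as $M(x)=M_1(x_1)M_2(x_2)$. Fix a Young function $\Phi$ equal to either $(\cosh-1)$ or $(\cosh-1)_*$ and a scalar $\alpha>0$. As $y\mapsto \Phi(\alpha f(y))$ is non-negative and $\mathcal B$-measurable, Tonelli's theorem yields both the measurability of the partial integral $x_1\mapsto \int_{\reals^{n_2}}\Phi(\alpha f(x_1,x_2))M_2(x_2)\,dx_2$ (as a $[0,+\infty]$-valued function of $x_1$) and the identity
\begin{equation*}
  \int_{\reals^n}\Phi(\alpha f(x))\,M(x)\,dx = \int_{\reals^{n_1}}\left(\int_{\reals^{n_2}}\Phi(\alpha f(x_1,x_2))\,M_2(x_2)\,dx_2\right)M_1(x_1)\,dx_1 \ .
\end{equation*}
So the left-hand side is finite if and only if the partial integral is $M_1$-integrable (which in particular forces it to be finite for $M_1$-a.e. $x_1$).

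For item 1, recall from the definition of the exponential space that $f\in\Lexp M$ if and only if $\int_{\reals^n}(\cosh-1)(\alpha f(x))M(x)\,dx<\infty$ for some $\alpha>0$. Applying the displayed identity with $\Phi=(\cosh-1)$ and quantifying over $\alpha>0$ on both ends shows that this holds precisely when $x_1\mapsto\int(\cosh-1)(\alpha f(x_1,x_2))M_2(x_2)\,dx_2$ is $M_1$-integrable for one $\alpha>0$, which is the assertion.

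Item 2 is the same argument with $\Phi=(\cosh-1)_*$, using in addition the fact recorded just after \eqref{eq:delta2} that, thanks to the $\Delta_2$-property of $(\cosh-1)_*$, one has $f\in\LlogL M$ if and only if $\int_{\reals^n}(\cosh-1)_*(f(x))M(x)\,dx<\infty$ — no scaling constant being needed; hence membership in $\LlogL M$ is equivalent to $M_1$-integrability of $x_1\mapsto\int(\cosh-1)_*(f(x_1,x_2))M_2(x_2)\,dx_2$. There is no genuine obstacle here: the only points requiring care are that the two Orlicz spaces must be characterized through finiteness of the \emph{modular} $\int\Phi(\alpha f)\,dM$ rather than through the Luxemburg norm, and that in item 2 the $\Delta_2$-bound is what lets us drop the dependence on $\alpha$; Tonelli's theorem supplies everything else, including the measurability of the partial integrals.
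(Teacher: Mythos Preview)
Your argument is correct: the factorisation $M(x)=M_1(x_1)M_2(x_2)$ together with Tonelli's theorem for the non-negative integrand $\Phi(\alpha f)$ gives exactly the equivalence stated, once the membership conditions for $\Lexp M$ and $\LlogL M$ are expressed via finiteness of the modular (with the $\Delta_2$-property disposing of the scaling in item~2). The paper does not supply its own proof of this proposition; it is stated as a recalled result with a reference to \cite{MR3365132}, so there is nothing to compare your approach against, but your proof is the natural and essentially unique one.
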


\subsection{Gaussian statistical bundle}

It is an essential feature of the exponential affine manifold on $\maxexp M$ discussed in Sec. \ref{sec:affine} that the exponential \emph{statistical bundle}
\begin{equation*}
  S\maxexp M = \setof{(p,U)}{p \in \maxexp M, U \in B_p} \ ,
\end{equation*}
with $B_p = \setof{U \in \Lexp {p\cdot M}}{\expectat {p\cdot M} U = 0}$ is an expression of the tangent bundle in the atlas $\setof{s_p}{p \in \maxexp M}$. This depends on the fact that all fibers $B_p$ are actually a closed subspace of the exponential space $\Lexp M$. This has been proved in Prop. \ref{prop:tangentspaces}. The equality of the spaces $\Lexp {p \cdot M}$ and $\Lexp M$ is equivalent to $p \in \maxexp M$, see the set of equivalent conditions called Portmanteau Theorem in \cite{MR3474821}.

We now investigate whether translation statistical models are sub-set of the maximal exponential model $\maxexp M$ and whether they are sub-manifolds. Proper sub-manifolds of the exponential affine manifold should have a tangent bundle that splits the statistical bundle.

Let $p \in \maxexp M$ and write $f = p \cdot M$. Then $f$ is a positive probability density of the Lebesgue space and so are all its translations
\begin{equation*}
  \tau_h f(x) = p(x-h) M(x-h) = \euler^{h\cdot x - \frac12 \avalof h^2} \tau_h p(x) \cdot M(x) = \tau^*_{-h} p(x) \cdot M(x) \ .
\end{equation*}

From Prop. \ref{prop:SMtoLlogL} and Prop. \ref{prop:taubyh}.2 we know that the translated densities $\tau^*_{-h} p$, are in $\LlogL M$ for all $h \in \reals^n$ and the dependence on $h$ is continuous.

Let us consider now the action of the translation on the values of the chart $s_M$. If $s_M(p) = U$, that is $p = \euler^{U - K_M(U)}$ with $U \in \sdomain M$, then
\begin{multline*}
  \tau^*_{-h} p(x) = \\ \euler^{h\cdot X - \frac12 \avalof h^2} \euler^{U(x-h)-K_M(U)}
  = \expof{h\cdot X - \frac12 \avalof h^2 + \tau_h U - K_M(U)} = \\
  \expof{\left(h\cdot X + \tau_h U - \expectat M {\tau_h U}\right) - \left(K_M(U) + \frac12 \avalof h^2 - \expectat M {\tau_h U}\right)} \ .
\end{multline*}
Here $\tau_h U \in \Lexp M$ because of Prop. \ref{prop:taubyh}.1. If $\tau^*_{-h} p \in \maxexp M$, then
\begin{equation*}
  s_M(\tau^*_{-h} p) = h\cdot X + \tau_h U - \expectat M {\tau_h U} \ .
\end{equation*}
The expected value of the translated $\tau_h U$ is
\begin{equation*}
  \expectat M {\tau_hU} = \gaussint {U(x-h)} x = \euler^{-\frac12 \avalof h^2} \gaussint {\euler^{-h\cdot x}U(x)} x \ .
\end{equation*}

We have found that the action of the translation on the affine coordinate $U = s_M(p)$ of a density $p \in \maxexp M$ is
\begin{equation}\label{eq:Utranslation}
  U \mapsto h\cdot X + \tau_h U - \euler^{-\frac12 \avalof h^2} \expectat M {\euler^{-h\cdot X}U}\ ,
\end{equation}
and we want the resulting value belong to $\sdomain M$, i.e. we want to show that
\begin{multline*}
  \expectat M {\expof{\gamma\left(h\cdot X + \tau_h U - \expectat M {\tau_h U}\right)}} = \\
  \euler^{\gamma\expectat M {\tau_h U}}\expectat M {\euler^{\gamma h \cdot X}} \expectat M {\euler^{\gamma\tau_h U}} = \\
  \euler^{\frac{\gamma^2}2\avalof h^2 + \gamma\expectat M {\tau_h U}} \expectat M {\euler^{\gamma\tau_h U}} \ .
\end{multline*}
is finite for $\gamma$ in a neighborhood of 0.

We have the following result.

\begin{proposition} \begin{enumerate}
\item If $p \in \maxexp M$, for all $h \in \reals^n$ the translated density $\tau^*_{-h}p$ is in $\maxexp M$.
\item 
  If $s_M(p) \in \Cexp M$, then $s_M(\tau^*_{-h}p) \in \Cexp M \cap \sdomain M$ for all $h \in \reals^n$ and dependence in $h$ is continuous.
\end{enumerate}
\end{proposition}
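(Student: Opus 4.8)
\medskip
\noindent\emph{Plan of proof.}
Write $p=\euler^{U-K_M(U)}$ with $U\in\sdomain{M}$, and recall from the computation leading to \eqref{eq:Utranslation} that $\tau^*_{-h}p$ is a probability density of the Gaussian space with $\log\tau^*_{-h}p = W_h - c_h$, where $W_h := h\cdot x + \tau_h U$ and $c_h := \frac12\avalof{h}^2 + K_M(U)$. Hence, once we know $\tau^*_{-h}p\in\maxexp{M}$, applying the chart gives $s_M(\tau^*_{-h}p)=W_h-\expectat{M}{W_h}=h\cdot x+\tau_h U-\expectat{M}{\tau_h U}$, in agreement with \eqref{eq:Utranslation}. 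A linear form belongs to $\Lexp{M}$ (Section~\ref{sec:ineq-motiv}) and $\tau_h U\in\Lexp{M}$ by Proposition~\ref{prop:taubyh}.1, so $W_h\in\Lexp{M}$; the only real point is whether the translated density is \emph{maximal}, i.e.\ whether $s_M(\tau^*_{-h}p)$ lies in $\sdomain{M}$.

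For part~(1) the plan is to use the ``Portmanteau theorem'' of \cite{MR3474821}: $\tau^*_{-h}p\in\maxexp{M}$ if and only if the open exponential arc joining $M$ to $\tau^*_{-h}p$ exists, equivalently $\rho\mapsto\expectat{M}{(\tau^*_{-h}p)^\rho}$ (equivalently $\rho\mapsto\expectat{M}{\euler^{\rho W_h}}$) is finite on an open interval containing $[0,1]$. The subtlety is that this is strictly stronger than finiteness of the moment generating function of $W_h$ near $0$ (which only records $W_h\in\Lexp{M}$): one must push integrability past $\rho=1$, and the room to do so is furnished by the hypothesis $U\in\sdomain{M}$. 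Since $\sdomain{M}$ is open, convex and contains $0$, there is $\epsilon_0>0$ such that $\gamma U\in\sdomain{M}$, hence $\expectat{M}{\euler^{\gamma U}}<\infty$, for all $\gamma\in[-\epsilon_0,1+\epsilon_0]$. Fixing $\rho\in(-\epsilon_0,1+\epsilon_0)$, I would estimate $\expectat{M}{\euler^{\rho W_h}}=\expectat{M}{\euler^{\rho h\cdot x}\,\euler^{\rho\tau_h U}}$ by H\"older: the factor carrying $\euler^{a\rho\,h\cdot x}$ is the Gaussian integral of the exponential of a linear form and is finite for every exponent~$a$; the change of variables $z=x-h$ turns the other factor $\expectat{M}{\euler^{b\rho\tau_h U}}$ into $\euler^{-\avalof{h}^2/2}\,\expectat{M}{\euler^{b\rho U}\euler^{-h\cdot z}}$, and a second H\"older peels off another harmless Gaussian factor, leaving $\expectat{M}{\euler^{cb\rho U}}$, which is finite whenever $cb\rho\in[-\epsilon_0,1+\epsilon_0]$. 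All conjugate exponents may be taken as close to $1$ as we please, so $cb$ can be kept as close to $1$ as needed, and a legal choice exists for every $\rho$ in the (still open, still containing $[0,1]$) interval $(-\epsilon_0,1+\epsilon_0)$. Thus $\tau^*_{-h}p\in\maxexp{M}$, and $s_M(\tau^*_{-h}p)=h\cdot x+\tau_h U-\expectat{M}{\tau_h U}\in\sdomain{M}$.

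For part~(2), assume $U=s_M(p)\in\Cexp{M}$. Part~(1) already provides $\tau^*_{-h}p\in\maxexp{M}$ together with the formula for $s_M(\tau^*_{-h}p)$; it remains to show this element lies in $\Cexp{M}$ and depends continuously on $h$. For membership: $\Cexp{M}$ is a closed vector subspace of $\Lexp{M}$; $\tau_h U\in\Cexp{M}$ by Proposition~\ref{prop:taubyh}.3; the linear form $h\cdot x$ satisfies the criterion of Proposition~\ref{prop:d} since $\expectat{M}{(\cosh-1)(\rho\,h\cdot x)}=\euler^{\rho^2\avalof{h}^2/2}-1<\infty$ for every $\rho>0$; and constants, being bounded, are in $\Cexp{M}$ by the same criterion. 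Hence the sum $h\cdot x+\tau_h U-\expectat{M}{\tau_h U}$ belongs to $\Cexp{M}\cap\sdomain{M}$. For continuity of $h\mapsto s_M(\tau^*_{-h}p)$ into $\Lexp{M}$: the term $h\cdot x$ is Lipschitz in $h$ (by homogeneity and rotation invariance of $M$, $\normat{\Lexp{M}}{(h-h')\cdot x}=\avalof{h-h'}\,\normat{\Lexp{M}}{x_1}$); $h\mapsto\tau_h U$ is continuous into $\Lexp{M}$ by Proposition~\ref{prop:taubyh}.3; and since $\expectat{M}{\cdot}$ is a bounded linear functional on $\Lexp{M}$ (through $\Lexp{M}\hookrightarrow L^1(M)$), $h\mapsto\expectat{M}{\tau_h U}$ is continuous, hence $h\mapsto\expectat{M}{\tau_h U}\cdot 1$ is continuous into $\Lexp{M}$. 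A sum of three continuous maps is continuous.

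The main obstacle is the estimate in part~(1): one has to recognise that membership in $\maxexp{M}$ is governed by a two-sided open interval of exponential integrability straddling $[0,1]$, not by finiteness near $0$, and that the only source of extra room to reach beyond $\rho=1$ is the openness of $\sdomain{M}$ about $U$. The nested H\"older argument — exploiting that Gaussian moments of linear forms cost nothing — is precisely what transfers that room onto the translated density, after which the Portmanteau equivalence closes the argument. The rest, including all of part~(2), is routine bookkeeping with the results of Sections~\ref{sec:gaussian-space} and~\ref{sec:expon-manif-gauss}.
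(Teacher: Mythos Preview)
Your proposal is correct and follows essentially the same route as the paper. For part~(1), both arguments perform the change of variables $z=x-h$ to convert $\expectat{M}{\euler^{\gamma\tau_hU}}$ into an $M$-integral of $\euler^{\gamma U}$ times a harmless Gaussian tilt $\euler^{-h\cdot z}$, and both exploit the openness of $\sdomain{M}$ about $U$ (your $\epsilon_0$, the paper's $a>1$) to push the exponent past the critical value; the only cosmetic difference is that you separate factors with two nested H\"older inequalities while the paper uses the additive Young inequality $xy\le\frac1\alpha x^\alpha+\frac1\beta y^\beta$. Your explicit invocation of the Portmanteau criterion ($\rho\mapsto\expectat{M}{(\tau^*_{-h}p)^\rho}$ finite on an open interval containing $[0,1]$) is in fact cleaner than the paper's preamble, which somewhat loosely asks for finiteness ``for $\gamma$ in a neighborhood of $0$'' and writes an unjustified product factorisation, although the paper's actual proof establishes the correct range $|\gamma|\le\sqrt a>1$. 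For part~(2) the two proofs are identical: Proposition~\ref{prop:taubyh}(\ref{item:taubyh2}) for $\tau_hU$, direct verification that $h\cdot X\in\Cexp{M}$, and continuity of each summand.
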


\begin{proof}\begin{enumerate}
\item For each $\gamma$ and conjugate exponents $\alpha,\beta$, we have
  \begin{multline*}
    \expectat M {\euler^{\gamma \tau_h U}} = \euler^{-\frac{1}2\avalof h^2} \gaussint {\euler^{-h \cdot x}\euler^{\gamma U(x)}} x \le \\ \euler^{-\frac{1}2\avalof h^2} \left(\frac1\alpha \euler^{\frac{\alpha^2}2 \avalof h^2} + \frac1\beta \expectat M {\euler^{\beta\gamma U}}\right) \ .
  \end{multline*}
As $U \in \sdomain M$, then $\expectat M {\euler^{\pm aU}} < \infty$ for some $a > 1$, and we can take $\beta = \sqrt a$ and $\gamma = \pm \sqrt a$.
\item 
Under the assumed conditions on $U$ the mapping $h \mapsto \tau_hU$ is continuous in $\Cexp M$ because of Prop. \ref{prop:taubyh}.\ref{item:taubyh2}. So is $h \mapsto \expectat M {\tau_hU}$. As $X_i \in \Cexp M$, $i=1,\dots,n$, the same is true for $h \mapsto h\cdot X$. In conclusion, the translated $U$ of \eqref{eq:Utranslation} belongs to $\Cexp M$.
\end{enumerate}
\qed      
\end{proof}

The proposition above shows that the translation statistical model $\tau^*_{-h} p$, $h \in \mathcal \reals^m$ is well defined as a subset of $\maxexp M$. To check if it is a differentiable sub-manifold, we want to compute the velocity of a curve $t \mapsto \tau^*_{h(t)} p$, that is
\begin{equation*}
  \derivby t \left(h(t)\cdot X + \tau_{h(t)} U - \expectat M {\tau_{h(t)}} U \right) \ .
\end{equation*}
That will require first of all the continuity in $h$, hence $U \in \Cexp M$, and moreover we want to compute $\partial/\partial{h_i} U(x-h)$, that is the gradient of $U$. This task shall be the object of the next section.

Cases other than translations are of interest. Here are two sufficient conditions for a density to be in $\maxexp M$. 

\begin{proposition}\label{prop:sufficientforE}\ 
  \begin{enumerate}
\item Assume $p > 0$ $M$-a.s., $\expectat M p = 1$, and
\begin{equation}\label{eq:boundedmoments}
  \expectat M {p^{n_1/(n_1-1)}} \le 2^{n_1/(n_1-1)}, \quad \expectat M {p^{-1/(n_2-1)}} \le 2^{n_2/(n_2-1)}
  \end{equation}
for some natural $n_1, n_2 > 2$. Then $p \in \maxexp M$, the exponential spaces are equal, $\Lexp M = \Lexp {p\cdot M}$, and for all random variable $U$
\begin{align}
  \normat {\Lexp{p \cdot M}} U &\le 2^{n_1} \normat {\Lexp M} U \ , \label{eq:boundednorms1} \\
  \normat {\Lexp M} U &\le 2^{n_2} \normat {\Lexp {p \cdot M}} {U} \ . \label{eq:boundednorms} 
\end{align}
\item Condition \eqref{eq:boundedmoments} holds for $p = \sqrt{\pi/2}\absoluteval {X_i}$ and for $p = X_i^2$, $i=1,\dots,n$.
\item Let $\chi$ be a diffeomorphism of $\reals^n$ and such that both the derivatives are uniformly bounded in norm. Then the density of the image under $\chi$ of the standard Gaussian measure belongs to $\maxexp M$.
\end{enumerate}  
\end{proposition}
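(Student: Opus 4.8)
The plan is to prove the three items in turn, with item (1) being the workhorse. For item (1), the strategy is to reduce everything to the two-sided integrability/norm comparisons already developed for $\maxexp M$. First I would recall from the Portmanteau-type characterization cited after Prop.~\ref{prop:tangentspaces} that $p \in \maxexp M$ is equivalent to $\Lexp M = \Lexp{p\cdot M}$, which in turn can be checked via the a.s.-positivity of $p$ together with finiteness of suitable moments of $p$ and $p^{-1}$. The explicit bounds $\expectat M {p^{n_1/(n_1-1)}} \le 2^{n_1/(n_1-1)}$ and $\expectat M {p^{-1/(n_2-1)}} \le 2^{n_2/(n_2-1)}$ are tailored exactly so that the H\"older/Fenchel-Young interpolation argument used in the proof of Prop.~\ref{prop:tangentspaces} goes through with controlled constants. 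Concretely, to prove \eqref{eq:boundednorms1}, normalize $U$ so that $\normat{\Lexp M}{U}\le 1$, i.e. $\expectat M{(\cosh-1)(U)}\le 1$, and estimate $\expectat M{(\cosh-1)(\alpha U)\,p}$ by splitting $p = p\cdot 1$ with H\"older at conjugate exponents $n_1$ and $n_1/(n_1-1)$, using $(\cosh-1)(\alpha U)^{n_1} \le 2^{-(n_1-1)}(\cosh-1)(2^{n_1-1}\alpha U)$ (the iterated inequality \eqref{eq:simple}) together with $\expectat M{p^{n_1/(n_1-1)}}\le 2^{n_1/(n_1-1)}$; choosing $\alpha = 2^{-n_1}$ then forces the Luxemburg norm bound. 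The reverse bound \eqref{eq:boundednorms} is symmetric, writing $1 = p\cdot p^{-1}$ and using the control on $\expectat M{p^{-1/(n_2-1)}}$. Once both norm inequalities hold, the spaces coincide and $p\in\maxexp M$ follows.

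For item (2), I would simply verify the two moment conditions \eqref{eq:boundedmoments} by direct Gaussian computation. For $p = \sqrt{\pi/2}\,\absoluteval{X_i}$ one has $\expectat M p = \sqrt{\pi/2}\cdot\sqrt{2/\pi} = 1$, and both $\expectat M{p^{n_1/(n_1-1)}}$ and $\expectat M{p^{-1/(n_2-1)}}$ are (up to the fixed constant $\sqrt{\pi/2}$ raised to a bounded power) absolute moments of a standard one-dimensional Gaussian of order between $1$ and $2$, or negative order strictly greater than $-1$; these are finite and can be bounded below $2^{n_1/(n_1-1)}$ resp. $2^{n_2/(n_2-1)}$ since for $n_1,n_2 > 2$ the exponents lie in $(1,2]$ and $(0,1)$ and the right-hand sides are at least $2$. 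The case $p = X_i^2$ is analogous: $\expectat M{X_i^2}=1$, $\expectat M{X_i^{2n_1/(n_1-1)}}$ is a Gaussian moment of order in $(2,4]$, and $\expectat M{X_i^{-2/(n_2-1)}}$ is an inverse moment of order in $(0,1)$, hence finite; one then checks the numerical bounds hold for all $n_1,n_2>2$ (possibly after noting monotonicity in $n_1,n_2$ and checking a worst case).

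For item (3), the plan is to write the image density explicitly via the change-of-variables formula: if $Y = \chi(X)$ with $X\sim M$, the density of $Y$ with respect to $M$ is $p(y) = \frac{M(\chi^{-1}(y))}{M(y)}\,\absoluteval{\det D\chi^{-1}(y)}$, and then to verify \eqref{eq:boundedmoments}, or rather the qualitative version, using the uniform bounds on $\nabla\chi$ and $\nabla\chi^{-1}$. The ratio $M(\chi^{-1}(y))/M(y) = \exp\!\big(\tfrac12(\avalof y^2 - \avalof{\chi^{-1}(y)}^2)\big)$ is controlled because $\chi^{-1}$ is Lipschitz, so $\avalof{\chi^{-1}(y)}^2$ differs from $\avalof y^2$ by at most a quadratic-with-linear correction (writing $\chi^{-1}(y) = \chi^{-1}(0) + \int_0^1 D\chi^{-1}(sy)\,y\,ds$ and using $\normat\infty{D\chi^{-1}}<\infty$), which gives polynomial-in-$y$ bounds on $\log p(y)$ from above and below; likewise $\absoluteval{\det D\chi^{-1}}$ is bounded above and below by positive constants. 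Hence $p$ is squeezed between two densities of the form $\euler^{A_i}$ with $A_i$ second-order polynomials, and the argument of the subsection "Polynomial bounds" (which shows such a $v = \log p$ lies in $\Lexp M$) applies; then invoking item (1) — or directly the fact that $\log p \in \Lexp M$ with small enough norm after rescaling, plus positivity — places $p$ in $\maxexp M$.

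The main obstacle I anticipate is not conceptual but bookkeeping: getting the numerical constants in the H\"older interpolation of item (1) to land exactly on the powers of $2$ claimed in \eqref{eq:boundednorms1}--\eqref{eq:boundednorms}, and similarly verifying the explicit moment bounds in item (2) and item (3) uniformly in $n_1,n_2$. The choice of exponents $n_1/(n_1-1)$ and $1/(n_2-1)$ and the target constants $2^{n_i/(n_i-1)}$, $2^{n_i}$ are clearly reverse-engineered from this estimate, so the task is to reproduce that chain of inequalities carefully rather than to find a new idea.
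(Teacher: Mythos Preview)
Your plan for items (1) and (2) is exactly the paper's argument: H\"older with exponents $n_i$ and $n_i/(n_i-1)$ together with the iterated inequality coming from \eqref{eq:simple}, and then direct Gaussian moment computations.

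For item (3), your core analysis --- writing the image density as $p(y)=\dfrac{M(\chi^{-1}(y))}{M(y)}\,\lvert\det D\chi^{-1}(y)\rvert$ and controlling $\lvert\chi^{-1}(y)\rvert^2$ via the Lipschitz bounds on $\chi$ and $\chi^{-1}$ --- is also what the paper does (after reducing to $\chi(0)=0$ and using Hadamard's inequality for the determinant). The gap is in your concluding step. Invoking item (1) does not work: the hypotheses \eqref{eq:boundedmoments} are \emph{quantitative} bounds $\expectat M{p^{n_1/(n_1-1)}}\le 2^{n_1/(n_1-1)}$, etc., and the quadratic bounds you obtain on $\log p$ give only \emph{finiteness} of these moments for small enough exponent, not the specific constants $2^{n_i/(n_i-1)}$. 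Your alternative, ``$\log p\in\Lexp M$ with small enough norm after rescaling,'' is also not the right criterion: membership in $\Lexp M$ is weaker than membership of the centered $\log p$ in the open set $\sdomain M$, and there is no rescaling freedom since $p$ is a fixed density.

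What the paper actually does --- and what you should do --- is bypass item (1) entirely and verify directly the Portmanteau-type criterion for $p\in\maxexp M$: show that for some $\epsilon>0$ both $\expectat M{p^{1+\epsilon}}<\infty$ and $\expectat {p\cdot M}{p^{-(1+\epsilon)}}=\expectat M{p^{-\epsilon}}<\infty$. Your quadratic bounds on $\log p$ yield exactly this once you check that the leading coefficient $(1+\epsilon)(1-\alpha)-1$ (with $\alpha^{-1}=\sup\lVert d\chi\rVert^2$) can be made negative by choosing $\epsilon$ small, which is the case analysis $\alpha\ge 1$ versus $\alpha<1$ the paper carries out.
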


\begin{proof} \begin{enumerate}

\item The bound on the moments in Eq.s \eqref{eq:boundedmoments} is equivalent to the inclusion in $\maxexp M$ because of the definition of $\sdomain M$, or see \cite[Th. 4.7(vi)]{MR3474821}.  Assume $\normat {\Lexp M} U \le 1$, that is $\expectat M {(\cosh-1)(U)} \le 1$. From H\"older inequality and the elementary inequality in Eq. \eqref{eq:simple}, we have
\begin{multline*}
  \expectat {f \cdot M} {(\cosh-1)\left(\frac U{2^{n_1}}\right)} = \expectat M {(\cosh-1)\left(\frac U{2^{n_1}}\right) f} \le \\ \expectat M {(\cosh-1)\left(\frac U{2^{n_1}}\right)^{n_1}}^{1/n_1} \expectat M {f^{n_1/(n_1-1)}}^{(n_1-1)/n_1} \le  \frac 12 \cdot 2 = 1
\end{multline*}
For the other direction, assume $\normat {\Lexp {f \cdot M}} {U} \le 1$, that is $\expectat M {\Phi(U) f} \le 1$, so that

\begin{multline*}
  \expectat M {(\cosh-1)\left(\frac U{2^{n_2}} \right)}
  = \expectat M {(\cosh-1)\left(\frac U{2^{n_2}} \right) f^{1/n_2} f^{-1/n_2}} \le \\ \expectat M {(\cosh-1)\left(\frac U{2^{n_2}} \right)^{n_2} f }^{1/n_2} \expectat M {f^{-1/(n_2-1)}}^{(n_2-1)/n_2} \le \frac 12 \cdot 2 = 1 \ .
\end{multline*}
\item Simple computations of moments.
\item We consider first the case where $\chi(0)=0$, in which case we have the following inequalities. If we define $\alpha^{-1} = \sup \setof{\normof {d\chi(x)}^2}{x \in \reals^n}$, then $\alpha \avalsof{\chi(x)} \le \avalsof x$ for all $x \in \reals^n$ and equivalently, $\alpha \avalsof x \le \avalsof{\chi^{-1}(x)}$. In a similar way, if we define $\beta^{-1} = 
  \sup \setof{\normof{d\chi^{-1}(y)}^2}{y \in \reals^n}$, then $\beta \avalsof{\chi^{-1}(y)} \le \avalsof y$ and $\beta\avalsof{x} \le \avalsof{\chi(x)}$.
  
The density of the image probability is $M\circ\chi^{-1} \absoluteval {\det d\chi^{-1}}$ and we want to show that for some $\epsilon > 0$ the following inequalities both hold, 
  \begin{equation*}
\expectat M {\left(\frac{M\circ\chi^{-1} \absoluteval {\det d\chi^{-1}}}{M}\right)^{1+\epsilon}} < \infty
  \end{equation*}
  and
  \begin{equation*}
    \expectat {M\circ\chi^{-1} \absoluteval {\det d\chi^{-1}}} {\left(\frac{M}{M\circ\chi^{-1} \absoluteval {\det d\chi^{-1}}}\right)^{1+\epsilon}} < \infty \ .
  \end{equation*}

The first condition is satisfied as 
  
\begin{multline*}
  \int \absoluteval {\det d\chi^{-1}(x)}^{1+\epsilon} \left(\frac{M\left(\chi^{-1}(x)\right)}{M(x)}\right)^{1+\epsilon} M(x) \  dx = \\
   \int \absoluteval {\det d\chi^{-1}(x)}^{1+\epsilon} M\left(\chi^{-1}(x)\right)^{1+\epsilon} M(x)^{-\epsilon} \  dx \le \\
(2\pi)^{-n/2} \beta^{-\frac{(1+\epsilon)n}2} \int \expof{-\frac12 \left((1+\epsilon)\absoluteval{\chi^{-1}(x)}^2 - \epsilon\absoluteval x ^2\right)} \  dx  = \\ (2\pi)^{-n/2} \beta^{-\frac{(1+\epsilon)n}2} \int  \expof{-\frac{\avalsof x}2 \left((1+\epsilon)\frac{\avalsof{\chi^{-1}(x)}}{\avalsof x} - \epsilon \right)} \  dx \le \\ (2\pi)^{-n/2} \beta^{-\frac{(1+\epsilon)n}2} \int \expof{-\frac{\avalsof x}2 \left((1+\epsilon)\alpha - \epsilon \right)} \  dx  \ ,
\end{multline*}
where we have used the Hadamard's determinant inequality
\begin{equation*}
  \avalof{\det d\chi^{-1}(x)} \le \normof{d\chi^{-1}(x)}^n \le \beta^{-n/2}  
\end{equation*}
and the lower bound $\alpha \le \frac{\avalsof{\chi^{-1}(x)}}{\avalsof x}$, $x \in \reals^n_*$. If $\alpha \ge 1$ then $(1+\epsilon)\alpha - \epsilon = \alpha + \epsilon(\alpha-1) \ge \alpha > 0$ for all $\epsilon$. If $\alpha < 1$, then $(1+\epsilon)\alpha-\epsilon > 0$ if $\epsilon < \alpha/(1-\alpha)$ e.g., $\epsilon=\alpha/2(1-\alpha)$, which in turn gives $(1+\epsilon)\alpha - \epsilon = \alpha/2$. In conclusion, there exist an $\epsilon > 0$ such that
\begin{multline*}
  \int \absoluteval {\det d\chi^{-1}(x)}^{1+\epsilon} \left(\frac{M\left(\chi^{-1}(x)\right)}{M(x)}\right)^{1+\epsilon} M(x) \  dx \le \\
(2\pi)^{-n/2} \avalof{\det d\chi^{-1}(x)}^{1+\epsilon} \int  \expof{-\frac{\alpha\avalsof x}4} \  dx  = \left(\frac\alpha2\right)^{n/2} \ .
\end{multline*}

For the second inequality, 

\begin{multline*}   \int \left(\frac{M(y)}{M\left(\chi^{-1}(y)\right) \avalof {\det d\chi^{-1}(y)}}\right)^{1+\epsilon} M\left(\chi^{-1}(y)\right) \avalof {\det d\chi^{-1}(y)} \ dy = \\
  \int M(y)^{1+\epsilon} M\left(\chi^{-1}(y)\right)^{-\epsilon} \avalof {\det d\chi^{-1}(y)}^{-\epsilon} \ dy = \\
  \int M(\chi(x))^{1+\epsilon} M(x)^{-\epsilon} \avalof {\det d\chi^{-1}(\chi(x))}^{-\epsilon} \avalof{\det d\chi(x)} \ dx = \\
 \int \avalof{\det d\chi(x)}^{1+\epsilon} M(\chi(x))^{1+\epsilon} M(x)^{-\epsilon} \ dx \ .
\end{multline*}

As the last term is equal to the expression in the previous case with $\chi^{-1}$ replaced by $\chi$, the same proof applies with the bounds $\alpha$ and $\beta$ exchanged.
\end{enumerate}
\qed
\end{proof}

\begin{remark}
  While the moment condition for proving $p \in \maxexp M$ has been repeatedly used, nonetheless the results above have some interest. The first one is an example where an explicit bound for the different norms on the fibers of the statistical bundle is derived. The second case is the starting point for the study of transformation models where a group of transformation $\chi_\theta$ is given.   
\end{remark}

\section{Weighted Orlicz--Sobolev Model Space}
\label{sec:orlicz-sobolev}

We proceed in this section to the extension of our discussion of translation statistical models to statistical models of the Gaussian space whose densities are differentiable. We restrict to generalities and refer to previous work in \cite{MR3365132} for examples of applications, such as the discussion of Hyv\"arinen divergence. This is a special type of divergence between densities that involves an $L^2$-distance between gradients of densities \cite{MR2249836} which has multiple applications. In particular, it is related with the improperly called Fisher information in \cite[p. 49]{MR2409050}.

We are led to consider a case classical weighted Orlicz--Sobolev spaces which is not treated in much detail in standard monographs such as \cite{MR2424078}. The analysis of the finite dimensional Gaussian space i.e. the space of square-integrable random variables on $\left(\reals^n,\mathcal B(\reals^n),M\right)$ is a well developed subject. Some of the result below could be read as special case of that theory. We refer to P. Malliavin's textbook \cite[Ch 5]{MR1335234} and to D. Nualart's monograph \cite{MR2200233}.

\subsection{Orlicz-Sobolev spaces with Gaussian weight}
\label{sec:orlicz-sobol-spac}

The first definitions are taken from our \cite{MR3365132}. 

\begin{definition}
The exponential and the mixture Orlicz-Sobolev-Gauss (OSG) spaces are, respectively,
\begin{align}
   \Wexp M &= \setof{f \in \Lexp M}{\partial_j f \in \Lexp M}   \label{eq:OrSobExp} \ ,\\
  \WlogL M &= \setof{f \in \LlogL M}{\partial_j f \in \LlogL M}   \label{eq:OrSobLog} \ ,
\end{align}
where $\partial_j$, $ j = 1, \dots, n$, is the partial derivative in the sense of distributions. 
\end{definition}

As $\phi \in \Cinfcs{\reals^n}$ implies $\phi M \in \Cinfcs{\reals^n}$, for each $f \in \WlogL M$ we have, in the sense of distributions, that 
\begin{equation*}
\scalarat M {\partial_jf} \phi = \scalarof{\partial_j f}{\phi M} = - \scalarof{f}{\partial_j(\phi M)} = \scalarof {f}{M(X_j - \partial_j)\phi} = \scalarat M f {\delta_j \phi} \ ,
\end{equation*}
with $\delta_j \phi = (X_j - \partial_j)\phi$. The \emph{Stein operator} $\delta_i$ acts on $\Cinfcs{\reals^n}$.

The meaning of both operators $\partial_j$ and $\delta_j = (X_j - \partial_j)$ when acting on square-integrable random variables of the Gaussian space is well known, but here we are interested in the action on OSG-spaces. Let us denote by $\Cinfp {\reals^n}$ the space of infinitely differentiable functions with polynomial growth. Polynomial growth implies the existence of all $M$-moments of all derivatives, hence $\Cinfp{\reals^n} \subset \WlogL M$. If $f \in \Cinfp{\reals^n}$, then the distributional derivative and the ordinary derivative are equal and moreover $\delta_jf \in \Cinfp{\reals^n}$. For each $\phi \in \Cinfcs{\reals^n}$ we have $\scalarat M{\phi}{\delta_j f} = \scalarat M {\partial_j \phi} f$.

The OSG spaces $W_{\cosh-1}^1(M)$ and $W_{(\cosh-1)_*}^1(M)$ are both Banach spaces, see \cite[Sec. 10]{MR724434}. In fact, both the product functions $(u,x) \mapsto (\cosh-1)(u)M(x)$ and $(u,x) \mapsto (\cosh-1)_*(u)M(x)$ are $\phi$-functions according the Musielak's definition. The norm on the OSG-spaces are the graph norms,
\begin{equation}
  \label{eq:OrSob-norm}
  \normat {W^{1}_{(\cosh-1)}(M)} f = \normat {\Lexp M} f + \sum_{j=1}^n \normat {\Lexp M} {\partial_j f}
\end{equation}
and
\begin{equation}
  \label{eq:preOrSob-norm}
  \normat {W^{1}_{(\cosh-1)_*}(M)} f = \normat {\Lexp M} f + \sum_{j=1}^n \normat {\Lexp M} {\partial_j f} \ .
\end{equation}

Because of Prop. \ref{prop:tangentspaces}, see also \cite[Th. 4.7]{MR3474821}, for each $p \in \maxexp M$, we have both equalities and isomorphisms $\Lexp {p\cdot M} = \Lexp M$ and $\LlogL (p\cdot M) = \LlogL M$. It follows
\begin{align}
   \Wexp M &= \Wexp {p\cdot M} \notag \\ &= \setof{f \in \Lexp p}{\partial_j f \in \Lexp {p\cdot M}} \ , \label{eq:OrSobexpatp} \\
  \WlogL M &= \WlogL {p\cdot M} \notag \\ &= \setof{f \in \LlogL {p\cdot M}}{\partial_j f \in \LlogL p}   \label{eq:OrSobLogatp} \ ,
\end{align}
and equivalent graph norms for any density $p \in \maxexp M$. The OSG spaces are compatible with the structure of the maximal exponential family $\maxexp M$. In particular, as all Gaussian densities of a given dimension belong into the same exponential manifold, one could have defined the OSG spaces with respect to any of such densities.

We review some relations between OSG-spaces and ordinary Sobolev spaces. For all $R > 0$
\begin{equation*}
(2\pi)^{- \frac n2} \ge  M(x) \ge M(x) (\absoluteval x < R) \ge (2\pi)^{- \frac n2} \euler^{-\frac {R^2}2} (\absoluteval x < R), \quad x \in \reals^n.
\end{equation*}
\begin{proposition} \label{eq:embeddings}
  Let $R > 0$ and let $\Omega_R$ denote the open sphere of radius $R$.
  \begin{enumerate}
  \item We have the continuous mappings
    \begin{equation*}
      \Wexp{\reals^n} \subset \Wexp M \rightarrow W^{1,p}(\Omega_R), \quad p \ge 1.
     \end{equation*}
    \item We have the continuous mappings
      \begin{equation*}
        W^{1,p}(\reals^n) \subset \WlogL{\reals^n} \subset \WlogL M \rightarrow W^{1,1}(\Omega_R), \quad p > 1.  
      \end{equation*}
\item \label{item:embeddings3}Each $u \in \Wexp M$ is a.s. H\"older of all orders on each $\overline\Omega_R$ and hence a.s. continuous. The restriction $\Wexp M \to C(\overline\Omega_R)$ is compact. 
 \end{enumerate}
\end{proposition}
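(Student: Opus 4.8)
The plan is to reduce all three items to the local comparison of the Gaussian weight $M$ with the (bounded, bounded-below) Lebesgue measure on $\Omega_R$, together with the inclusions among Orlicz and Lebesgue spaces already established in Proposition \ref{prop:O-inclusions}, and then to invoke the classical Sobolev embedding and Rellich--Kondrachov theorems from \cite[Ch.\ 8--9]{MR2759829}. First I would record the two-sided bound displayed just before the statement, namely $(2\pi)^{-n/2} \ge M(x) \ge (2\pi)^{-n/2}\euler^{-R^2/2}$ on $\Omega_R$. From this, for any Young function $\Phi$ and any $f$, $\int_{\Omega_R}\Phi(f(x))\,dx$ and $\int_{\Omega_R}\Phi(f(x))M(x)\,dx$ are comparable up to the constant $(2\pi)^{n/2}\euler^{R^2/2}$, so the restriction of the Luxemburg norm on $\Lexp M$ (resp.\ $\LlogL M$) dominates, up to a constant, the corresponding Orlicz norm on $\Omega_R$; since the distributional partial derivatives are unaffected by restriction, the same comparison holds for the graph norms. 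This already gives the continuous restriction maps $\Wexp M \to W^{1,(\cosh-1)}(\Omega_R)$ and $\WlogL M \to W^{1,(\cosh-1)_*}(\Omega_R)$.

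For item 1, I would then chain this with $\Lexp M \hookrightarrow L^a(M)$ for all $a<\infty$ (Proposition \ref{prop:O-inclusions}.1) — more precisely, directly on $\Omega_R$, the bounded domain inclusion $L^{(\cosh-1)}(\Omega_R)\hookrightarrow L^p(\Omega_R)$ for every $p\ge 1$, applied both to $f$ and to each $\partial_j f$ — to obtain the continuous map $\Wexp M \to W^{1,p}(\Omega_R)$. The left-hand inclusion $\Wexp{\reals^n}\subset\Wexp M$ is immediate from $M\le (2\pi)^{-n/2}$. Item 2 is entirely parallel: the mixture-space restriction lands in $W^{1,(\cosh-1)_*}(\Omega_R)\hookrightarrow W^{1,1}(\Omega_R)$ since on a bounded domain $L^{(\cosh-1)_*}(\Omega_R)\hookrightarrow L^1(\Omega_R)$, and the inclusions $W^{1,p}(\reals^n)\subset\WlogL{\reals^n}\subset\WlogL M$ follow from $L^p(M)\subset\LlogL M$ for $p>1$ (Proposition \ref{prop:O-inclusions}.1) together with $M\le(2\pi)^{-n/2}$.

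For item \ref{item:embeddings3}, the point is that by item 1 the restriction $\Wexp M \to W^{1,p}(\Omega_R)$ is continuous for \emph{every} finite $p$, in particular for $p>n$. The Morrey--Sobolev embedding $W^{1,p}(\Omega_R)\hookrightarrow C^{0,1-n/p}(\overline\Omega_R)$ for $p>n$ then shows each $u\in\Wexp M$ has an a.s.\ representative that is H\"older of exponent $1-n/p$ on $\overline\Omega_R$; letting $p\to\infty$ gives H\"older continuity of every order $<1$, hence continuity. For compactness of $\Wexp M \to C(\overline\Omega_R)$, fix one $p>n$: the composite factors as the continuous map $\Wexp M\to W^{1,p}(\Omega_R)$ followed by the Rellich--Kondrachov embedding $W^{1,p}(\Omega_R)\hookrightarrow\!\!\hookrightarrow C(\overline\Omega_R)$ (compact since $p>n$), and the composition of a bounded operator with a compact one is compact.

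I expect the only genuinely delicate point to be bookkeeping the passage between Luxemburg norms on the weighted space $\Lexp M$ and on the unweighted bounded domain $L^{(\cosh-1)}(\Omega_R)$ — one must check that the weight bounds translate into a genuine norm inequality rather than merely a set-theoretic inclusion, but this is exactly the argument already used in the proof of Proposition \ref{prop:O-inclusions}.2 and presents no real difficulty; all the analytic content is off-loaded onto the classical embedding theorems in \cite{MR2759829}. \qed
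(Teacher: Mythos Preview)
Your proposal is correct and follows essentially the same route as the paper's (very terse) proof: use the two-sided bound on $M$ over $\Omega_R$ together with elementary Young-function inequalities to pass to unweighted Lebesgue/Sobolev spaces on the bounded domain, then invoke Morrey's embedding and Rellich--Kondrachov compactness for item~\ref{item:embeddings3}. The only quibble is that in item~2 you cite Proposition~\ref{prop:O-inclusions}(1), which is about the $M$-weighted spaces, to justify the \emph{unweighted} inclusion $L^p(\reals^n)\subset\LlogL{\reals^n}$; the paper instead uses the pointwise bound $(\cosh-1)_*(y)\le y^2/2$ directly---though neither argument actually covers the full stated range $p>1$.
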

\begin{proof}
  \begin{enumerate}
  \item From the inequality on $M$ and from $(\cosh-1)(y) \ge y^{2n}/(2n)!$.
  \item From the inequality on $M$ and from $y^2/2 \ge (\cosh-1)_*(y)$ and $\cosh(1) - 1 + (\cosh-1)_*(y) \ge \absoluteval y$.
  \item It is the Sobolev's embedding theorem \cite[Ch. 9]{MR2759829}.
  \end{enumerate}
\end{proof}

Let us consider now the extension of the $\partial_j$ operator to the OSG-spaces and its relation with the translation operator.

The operator given by the ordinary partial derivative $\partial_j \colon \Cinfp{\reals^n} \to \Cinfp{\reals^n} \subset \LlogL M$ is closable. In fact, if both $f_n \to 0$ and $\partial_j f_n \to \eta$ in $\LlogL M$, then for all $\phi \in \Cinfcs{\reals^n}$,
\begin{equation*}
 \scalarat M \phi \eta =  \lim_{n\to\infty} \scalarat M {\phi} {\partial_j f_n} = \lim_{n\to\infty} \scalarat M {\delta \phi} {f_n} = 0 \ , 
\end{equation*}
hence $\eta=0$. The same argument shows that $\partial_j \colon \Cinfcs{\reals^n} \to \Cinfcs{\reals^n}  \subset \Lexp M$ is closable.

For $f \in \Lexp M$ we define $\tau_h f(x) = f(x-h)$ and it holds $\tau_hf \in \Lexp M$ because of Prop. \ref{prop:taubyh}\eqref{item:taubyh1}. For each given $f \in \Wexp M$ we denote by $\partial_j f \in \Wexp M$, $j=1,\dots,n$ its distributional partial derivatives and write $\nabla f =(\partial_j f \colon j=1,\dots,n)$.

\begin{proposition}[Continuity and directional derivative] \ 
  \begin{enumerate}
  \item For each $f \in \Wexp M$, each unit vector $h \in S^n$, and all $t \in \reals$, it holds
    \begin{equation*}
      f(x+th) - f(x) = t \int_0^1 \sum_{j=1}^n \partial_j f(x + sth)h_j \ ds \ . 
    \end{equation*}
Moreover, $\absoluteval t \le \sqrt 2$ implies 
\begin{equation*}
  \normat {\Lexp M} {f(x+th) - f(x)} \le 2 t \normat {\Lexp M} {\nabla f} \ ,
\end{equation*}
especially, $\lim_{t\to0} \normat {\Lexp M} {f(x+th) - f(x)} = 0$ uniformly in $h$.
  \item For each $f \in \Wexp M$ the mapping $h \mapsto \tau_h f$ is differentiable from $\reals^n$ to $L^{\infty-0}(M)$ with gradient $\nabla f$ at $h=0$.
  \item For each $f \in \Wexp M$ and each $g \in \LlogL M$, the mapping $h \mapsto \scalarat M {\tau_h f} g$ is differentiable. Conversely, if $f \in \Lexp M$ and $h \mapsto \tau_h f$ is weakly differentiable, then $f \in \Wexp M$ 
  \item If $\partial_j f \in \Cexp M$, $j = 1,\dots,n$, then strong differentiability in $\Lexp M$ holds. 
  \end{enumerate}
\end{proposition}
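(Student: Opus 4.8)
\emph{Proof outline.}

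For Part~1, the plan is to obtain the integral identity first for smooth functions and then transfer it by mollification. For $g\in\Cinfcs{\reals^n}$ the formula $g(x+th)-g(x)=t\int_0^1\nabla g(x+sth)\cdot h\,ds$ is the fundamental theorem of calculus. Given $f\in\Wexp M$, put $f_\lambda=f\ast\omega_\lambda\in C^\infty(\reals^n)$ for a family of mollifiers; then $\partial_jf_\lambda=(\partial_jf)\ast\omega_\lambda$, and, since every element of $\Lexp M$ is locally integrable, $f_\lambda\to f$ and $\nabla f_\lambda\to\nabla f$ in $L^1_{\mathrm{loc}}(\reals^n)$. Writing the identity for $f_\lambda$, applying Fubini to the right-hand side, and letting $\lambda\to0$ in $L^1_{\mathrm{loc}}$ yields the identity for $f$ almost everywhere. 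The norm estimate is then obtained along the lines of the proof of Proposition~\ref{prop:taubyh}.\ref{item:taubyh1}: from $\avalof{f(x+th)-f(x)}\le\avalof t\int_0^1\avalof{\nabla f(x+sth)}\,ds$ one applies Jensen's inequality for $\Phi=\cosh-1$, the Gaussian change of measure $M(z+a)=M(z)\euler^{-z\cdot a-\avalof a^2/2}$, H\"older's inequality and the bound $\Phi^2(u)\le\tfrac12\Phi(2u)$ of \eqref{eq:simple}; equivalently, $\int_0^1\avalof{\nabla f(\cdot+sth)}\,ds=\tau_\nu\avalof{\nabla f}$ with $\nu$ the uniform law on the segment $\{-sth:0\le s\le1\}$, and one invokes Proposition~\ref{prop:taubyh}. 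Since the resulting bound is independent of the unit vector $h$, the claimed uniform limit follows at once.

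Parts~2 and~4 run on the same engine. From Part~1, $\tau_hf-f+\nabla f\cdot h=\int_0^1(\nabla f-\tau_{sh}\nabla f)\cdot h\,ds$, so for any norm $\normof\cdot$ under which translations are bounded, $\normof{\tau_hf-f+\nabla f\cdot h}\le\avalof h\,\sup_{\avalof a\le\avalof h}\normof{\nabla f-\tau_a\nabla f}$. For Part~2 we use $\Lexp M\hookrightarrow L^q(M)$ for every finite $q$ (Proposition~\ref{prop:O-inclusions}), so $f,\nabla f\in L^{\infty-0}(M)=\bigcap_{q<\infty}L^q(M)$, and that translation is continuous at $0$ in $L^q(M)$ on $\Lexp M$ — which one checks by approximating in $L^{2q}(M)$ by functions of $\Cinfcs{\reals^n}$ and combining sup-norm continuity of translation with the elementary bound $\normat{L^q(M)}{\tau_a\psi}\le\euler^{\avalof a^2/(2q)}\normat{L^{2q}(M)}{\psi}$. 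Hence the remainder is $o(\avalof h)$ in every $L^q(M)$, giving Fr\'echet differentiability of $h\mapsto\tau_hf$ into $L^{\infty-0}(M)$ with gradient $\nabla f$ at $0$ as in Part~1; the general point follows from $\tau_{h_1+h_2}=\tau_{h_1}\tau_{h_2}$. For Part~4 the extra hypothesis $\partial_jf\in\Cexp M$ makes $a\mapsto\tau_a\nabla f$ norm-continuous in $\Lexp M$ at $0$ by Proposition~\ref{prop:taubyh}.\ref{item:taubyh2}, so the same estimate gives an $o(\avalof h)$ remainder in $\Lexp M$, i.e. strong differentiability.

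For Part~3, fix $f\in\Wexp M$ and $g\in\LlogL M$. Using $\tau_{h+k}f-\tau_hf=\tau_h(\tau_kf-f)$ and the integral identity, $\scalarat M{\tau_{h+k}f-\tau_hf}{g}=-\int_0^1\scalarat M{\tau_{h+sk}\nabla f}{g}\cdot k\,ds$. By Proposition~\ref{prop:taubyh}.\ref{item:taubyh3}, $\scalarat M{\tau_a\nabla f}{g}=\scalarat M{\nabla f}{\tau_a^*g}$ with $a\mapsto\tau_a^*g$ continuous in $\LlogL M$, so the integrand depends continuously on its shift; letting $k\to0$ identifies the derivative as $k\mapsto-\scalarat M{\tau_h\nabla f}{g}\cdot k$, continuous in $h$, so $h\mapsto\scalarat M{\tau_hf}{g}$ is $C^1$. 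For the converse, interpret weak differentiability of $h\mapsto\tau_hf$ as: for each coordinate direction the weak-$\sigma(\Lexp M,\LlogL M)$ limit $D_j=\lim_{t\to0}t^{-1}(\tau_{t\mathrm e_j}f-f)$ exists in $\Lexp M$. Testing with $\phi\in\Cinfcs{\reals^n}\subset\LlogL M$ and using $\scalarat M{\tau_hf}{\phi}=\int f(z)(\phi M)(z+h)\,dz$, the partial derivative at $0$ equals $\int f\,\partial_j(\phi M)=-\langle\partial_jf,\phi M\rangle$ in the distributional sense, and also equals $\scalarat M{D_j}{\phi}$; since $\{\phi M:\phi\in\Cinfcs{\reals^n}\}=\Cinfcs{\reals^n}$ this forces $\partial_jf=-D_j\in\Lexp M$, hence $f\in\Wexp M$.

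The delicate step is Part~1: making the transfer from the fundamental theorem of calculus for smooth functions to arbitrary $f\in\Wexp M$ airtight, and pushing the Gaussian weight through Jensen, H\"older and the $\Delta_2$-type inequality $\Phi^2(u)\le\tfrac12\Phi(2u)$ so that the translation estimate emerges with the stated constant and range of $t$. After that, the remaining work is essentially a choice of topology in which the single remainder estimate of Part~1 is read: each $L^q(M)$ for Part~2, the duality with $\LlogL M$ for the weak statement of Part~3, and the norm of $\Lexp M$ for Part~4 — the last being available precisely because the hypothesis places $\nabla f$ in the exponential class $\Cexp M$, on which translation is norm-continuous.
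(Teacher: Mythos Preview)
Your proposal is correct and follows essentially the same approach as the paper: the integral identity of Part~1 together with the translation bounds of Proposition~\ref{prop:taubyh} drives all four items, with Parts~2--4 being read-offs of the single remainder estimate in the appropriate topology. The only variation worth noting is how you establish the integral formula in Part~1 --- you transfer it from smooth functions by mollification and passage to the limit in $L^1_{\mathrm{loc}}$, whereas the paper obtains it directly by pairing both sides against an arbitrary $g\in\Cinfcs{\reals^n}$, changing variables, and using the distributional definition of $\partial_j f$; both routes are standard and equivalent in strength.
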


\begin{proof} \ 
  \begin{enumerate}
  \item Recall that for each $g \in C_0^\infty(\reals^n)$ we have
    \begin{equation*}
      \scalarat M {\partial_j f} g = - \scalarof f {\partial_j(gM)} = \scalarat M f {\delta_j g}, \quad \delta_j g = X_jg - \partial_j g \in \Cinfcs{\reals^n} \ .
    \end{equation*}
We show the equality $\tau_{-th}f - f = t \int_0^1 \tau_{-sth}(\nabla f) \cdot h \ ds$ in the scalar product with a generic $g \in C_0^\infty(\reals^n)$:
\begin{align*}
  \scalarat M {\tau_{-th}f - f} g &= \int f(x+th)g(x)M(x)\ dx - \int f(x)g(x)M(x)\ dx \\ &= \int f(x)g(x-th)M(x-th)\ dx - \int f(x)g(x)M(x)\ dx \\
  &= \int f(x)\left(g(x-th)M(x-th)-g(x)M(x)\right)\ dx \\
&= -t \int f(x) \int_0^1 \sum_{j=1}^n \partial_j (gM)(x - sth)h_j \ ds \ dx \\
&= -t \int_0^1 \int f(x) \sum_{j=1}^n \partial_j (gM)(x - sth)h_j \ dx \ ds \\
&= t \int_0^1 \int \sum_{j=1}^n \partial_j f(x) h_j \ g(x - sth) M(x - sth) \ dx \ ds \\
&= t \int_0^1 \int \sum_{j=1}^n \partial_j f(x + sth)h_j \ g(x) M(x) \ dx \ ds \\
&= \scalarat M {t \int_0^1 \tau_{-sth}(\nabla f) \cdot h \ ds} g \ .
\end{align*}

If $\absoluteval t \le \sqrt {\log 2}$ then the translation $sth$ is small, $\absoluteval{sth} \le \sqrt {\log 2}$ so that, according to Prop. \ref{prop:taubyh}\eqref{item:taubyh1}, we have$\normat {\Lexp M} {\tau_{-sth}(\nabla f \cdot h)} \le 2 \normat {\Lexp M} {\nabla f \cdot h}$ and the thesis follows.

\item We want to show that the following limit holds in all $L^\alpha(M)$-norms, $\alpha > 1$:
  \begin{equation*}
    \lim_{t\to0} \frac{\tau_{-th}f - f}{t} = \sum_{j=1}^n h_j \partial_j f\ .
  \end{equation*}
Because of the identity in the previous Item, we need to show the limit
\begin{equation*}
  \lim_{t\to0} \int \absoluteval{\int_0^1 (\tau_{-sth}(\nabla f(x) \cdot h) - \nabla f(x) \cdot h)\ ds}^\alpha \ M(x)dx = 0 \ .
\end{equation*}
The Jensen's inequality gives 
\begin{multline*}
  \int \absoluteval{\int_0^1 (\tau_{-sth}(\nabla f(x) \cdot h) - \nabla f(x) \cdot h)\ ds}^\alpha \ M(x)dx \le \\ \int_0^1 \int \absoluteval{\tau_{-sth}(\nabla f(x) \cdot h) - \nabla f(x) \cdot h)}^\alpha \ M(x)dx \ ds 
\end{multline*}
and the result follows because translations are bounded and the continuous in $L^\alpha(M)$.

\item We have
  \begin{multline*}
    \scalarat M {\int_0^1 (\tau_{(-sth)}f - f) \ ds} g = \int_0^1 \scalarat M {\tau_{(-sth)}f - f} g \ ds = \\
\int_0^1 \scalarat M f {\tau^*_{(-sth)}g - g} \ ds \ .
  \end{multline*}
Conclusion follows because $y \mapsto \tau_y^* g$ is bounded continuous.

Assume now $f \in \Lexp M$ and $h \mapsto \tau_h f$ is weakly differentiable. Then there exists $f_1,\dots,f_n \in \Lexp M$ such that for each $\phi \in C^\infty_0(\reals^n)$
\begin{multline*}
  \scalarof {f_j} {\phi M} = \scalarat M {f_j} \phi = \derivby t \scalarat M {\tau_{-te_j}f} \phi = \derivby t \scalarof {\tau_{-te_j}f} {\phi M} = \\ \derivby t \scalarof {f} {\tau_{te_j}(\phi M)} = - \scalarof f {\partial_j (\phi M} \ .
\end{multline*}
The distributional derivative holds because $\phi M$  is the generic element of $C^\infty_0(\reals^n)$.

\item For each $\rho > 0$ Jensen's inequality implies
\begin{multline*}
  \normat {\Lexp M} {\int_0^1 (\tau_{-sth}(\nabla f \cdot h) - \nabla f \cdot h)\ ds \ M(x)dx} \le \\  \int_0^1 \normat {\Lexp M} {(\tau_{-sth}(\nabla f \cdot h) - \nabla f \cdot h)\ M(x)dx} \ ds \ .
\end{multline*}
As in Prop. \ref{prop:taubyh}\eqref{item:taubyh1} we choose $\absoluteval t \le \sqrt {\log 2}$ to get $\absoluteval {ste_j} \le \sqrt {\log 2}$, $0 \le s \le 1$, so that $\normat {\Lexp M}{\tau_{-sth}\nabla f \cdot h} \le 2 \normat {\Lexp M}{\nabla f \cdot h}$, hence the integrand is bounded by $\normat {\Lexp M}{\nabla f \cdot h}$. The convergence for each $s$ follows from the continuity of the translation on $\Cexp M$.
  \end{enumerate}
\end{proof}

Notice that in Item 2. of the proposition we could have derived a stronger differentiability if the mapping $h \mapsto \tau_h \nabla f$ were continuous in $\Lexp M$. That, and other similar observations, lead to the following definition.

\begin{definition}
  The \emph{Orlicz-Sobolev-Gauss exponential class} is 
  \begin{equation*}
    \WCexp M = \setof{f \in \Wexp M}{f,\partial_j f \in \Cexp M, j=1,\dots,n}
  \end{equation*}
\end{definition}

The following density results will be used frequently in approximation arguments. We denote by $(\omega_n)_{n\in\naturals}$ a sequence of mollifiers.
\begin{proposition}[Calculus in $\WCexp M$]\label{prop:molliD}
\begin{enumerate}
\item \label{item:molliD1} For each $f \in \WCexp M$ the sequence $f*\omega_n$, $n \in \naturals$, belongs to $C^\infty(\reals^n) \cap \Wexp M$. Precisely, for each $n$ and $j=1,\dots,n$, we have the equality $\partial_j (f * \omega_n) = (\partial_j f) * \omega_n$; the sequences $f * \omega_n$, respectively $\partial_j f * \omega_n$, $j=1,\dots,n$, converge to $f$, respectively $\partial_j f$, $j=1,\dots,n$, strongly in $\Lexp M$. 
\item \label{item:molliD2}
Same statement is true if $f \in \WlogL M$.
\item \label{item:molliD3}
Let be given $f \in \WCexp M$ and $g \in \WlogL M$. Then $fg \in W^{1,1}(M)$ and $\partial_j (fg) = \partial_j f g + f \partial_j g$. 
\item \label{item:molliD4} Let be given $F \in C^1(\reals)$ with $\normat \infty {F'} < \infty$. For each $U \in \WCexp M$, we have $F\circ U, F'\circ U \partial_j U \in \Cexp M$ and  $\partial_j F\circ U = F'\circ U \partial_j U$, in particular $F(U) \in \WCexp M$.
\end{enumerate}
\end{proposition}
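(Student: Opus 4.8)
The plan is to reduce everything to two ingredients already available in the paper: the mollifier approximation of Proposition~\ref{prop:molli} and Proposition~\ref{prop:taubymu}, which control convolution on $\Lexp M$ and on $\Cexp M$, and the characterization of the exponential class in Proposition~\ref{prop:d}, from which one reads off the \emph{ideal property} of $\Cexp M$: if $g$ is measurable, $h \in \Cexp M$ and $\avalof g \le h$ $M$-a.e., then $g \in \Cexp M$, because $(\cosh-1)$ is even and nondecreasing on $[0,\infty)$, so $\int (\cosh-1)(\rho g(x))\,M(x)\,dx \le \int (\cosh-1)(\rho h(x))\,M(x)\,dx < \infty$ for every $\rho > 0$; note also $1 \in \Cexp M$, since $\int(\cosh-1)(\rho)\,M(x)\,dx = \cosh\rho - 1 < \infty$. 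For items \eqref{item:molliD1} and \eqref{item:molliD2} the starting point is the classical identity $\partial_j(f*\omega_n) = (\partial_j f)*\omega_n$, valid whenever $f,\partial_j f \in L^1_{\mathrm{loc}}(\reals^n)$, with $f*\omega_n \in C^\infty(\reals^n)$, see \cite[Ch.~4 and 8]{MR2759829}. When $f \in \WCexp M$ both $f$ and $\partial_j f$ lie in $\Cexp M$, so Proposition~\ref{prop:molli} applied to each gives $f*\omega_n \to f$ and $\partial_j(f*\omega_n) = (\partial_j f)*\omega_n \to \partial_j f$ strongly in $\Lexp M$, and in particular $f*\omega_n \in \Wexp M$. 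Item \eqref{item:molliD2} is the same computation with $\LlogL M$ replacing $\Lexp M$; there the $\Delta_2$-bound \eqref{eq:delta2} makes mollifier convergence hold for every $v \in \LlogL M$ (using the inequality $\int(\cosh-1)_*(v*\omega_\lambda(x))\,M(x)\,dx \le \int(\cosh-1)_*(v(x))\,M(x)\,dx$ displayed before Proposition~\ref{prop:molli} together with density of $\Cinfcs{\reals^n}$), so no ``class'' restriction is needed.

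For item \eqref{item:molliD3}, put $f_n = f*\omega_n$; by item \eqref{item:molliD1} we have $f_n \in C^\infty(\reals^n)$ with $f_n \to f$ and $\partial_j f_n \to \partial_j f$ in $\Lexp M$. Since $g \in \WlogL M$ means $g,\partial_j g \in L^1(M) \subset L^1_{\mathrm{loc}}(\reals^n)$, the distributional product rule for a smooth function times an $L^1_{\mathrm{loc}}$-Sobolev function gives $\partial_j(f_n g) = (\partial_j f_n)\,g + f_n\,\partial_j g$. Letting $n \to \infty$ and using the Fenchel--Young bound $\avalof{\scalarat M u v} \le 2\normat{\Lexp M}u\normat{\LlogL M}v$, one gets $f_n g \to fg$, $(\partial_j f_n)g \to (\partial_j f)g$ and $f_n\,\partial_j g \to f\,\partial_j g$ in $L^1(M)$ (the last because $\{f_n\}$ is bounded in $\Lexp M$ and $\partial_j g \in \LlogL M$); passing to the limit in the distributional identity shows $\partial_j(fg) = (\partial_j f)g + f\,\partial_j g$ with every term in $L^1(M)$, i.e. $fg \in W^{1,1}(M)$.

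For item \eqref{item:molliD4}, write $L = \normat\infty{F'}$, so $\avalof{F(u)-F(0)} \le L\avalof u$. The membership statements follow from the ideal property: pointwise $\avalof{F\circ U} \le \avalof{F(0)} + L\avalof U$ and $\avalof{F'\circ U\,\partial_j U} \le L\avalof{\partial_j U}$, and both $\avalof{F(0)} + L\avalof U$ and $L\avalof{\partial_j U}$ lie in $\Cexp M$ because $U,\partial_j U \in \Cexp M$ and $\Cexp M$ is a vector space containing the constants; hence $F\circ U, F'\circ U\,\partial_j U \in \Cexp M$. For the identity $\partial_j(F\circ U) = F'\circ U\,\partial_j U$, mollify: $U_n = U*\omega_n \to U$ and $\partial_j U_n \to \partial_j U$ in $\Lexp M$ by item \eqref{item:molliD1}, hence, along a subsequence, also $M$-a.e. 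For the smooth $U_n$ the classical chain rule gives $\partial_j(F\circ U_n) = F'(U_n)\,\partial_j U_n$; the left side converges to $\partial_j(F\circ U)$ in the sense of distributions since $\avalof{F(U_n)-F(U)} \le L\avalof{U_n-U}\to 0$ in $\Lexp M \subset L^1(M)$, while on the right side the splitting $F'(U_n)\,\partial_j U_n - F'(U)\,\partial_j U = F'(U_n)(\partial_j U_n - \partial_j U) + (F'(U_n)-F'(U))\,\partial_j U$ shows convergence to $F'\circ U\,\partial_j U$ in $L^1(M)$: the first term is dominated by $L\avalof{\partial_j U_n - \partial_j U}\to 0$ in $L^1(M)$, and the second tends to $0$ by dominated convergence, using $F'(U_n)\to F'(U)$ a.e. and $\avalof{(F'(U_n)-F'(U))\,\partial_j U}\le 2L\avalof{\partial_j U}\in L^1(M)$. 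Therefore $\partial_j(F\circ U) = F'\circ U\,\partial_j U \in \Cexp M$ and, together with $F\circ U \in \Cexp M$, this gives $F\circ U \in \WCexp M$. I expect the main obstacle to be precisely this last item: one has to thread a.e.\ convergence along a subsequence together with the Lipschitz domination to push the classical chain rule through the mollification, and one must first isolate the ideal property of $\Cexp M$ so that the composed functions land back in the exponential class and not merely in $\Lexp M$.
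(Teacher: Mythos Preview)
Your proof is correct and follows essentially the same route as the paper: mollifier approximation from Proposition~\ref{prop:molli} for items \eqref{item:molliD1}--\eqref{item:molliD2}, a mollified product rule passed to the limit via the $\Lexp M$--$\LlogL M$ duality for item \eqref{item:molliD3}, and the Lipschitz bound plus the same splitting $F'(U_n)(\partial_j U_n-\partial_j U)+(F'(U_n)-F'(U))\partial_j U$ for item \eqref{item:molliD4}. Two small differences worth noting: in item \eqref{item:molliD3} you mollify only $f$ while the paper mollifies both $f$ and $g$, which is a harmless simplification; in item \eqref{item:molliD4} you isolate the ideal property of $\Cexp M$ up front (so the $\Cexp M$-membership of $F\circ U$ and $F'\circ U\,\partial_j U$ is immediate) and then pass to the limit only in $L^1(M)$ along a subsequence to get the distributional identity, whereas the paper carries the convergence all the way in $\Lexp M$ using that a sequence dominated by an element of $\Cexp M$ and converging in probability converges in $\Lexp M$-norm. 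Both versions are valid; yours is slightly more economical for the identity, the paper's gives a marginally stronger convergence statement along the way. One wording quibble: in item \eqref{item:molliD3} the parenthetical ``because $\{f_n\}$ is bounded'' is not the reason; what you actually use (and state just before) is $\normat{L^1(M)}{(f_n-f)\partial_j g}\le 2\normat{\Lexp M}{f_n-f}\normat{\LlogL M}{\partial_j g}\to 0$.
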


\begin{proof}
  \begin{enumerate}
  \item We need only to note that the equality $\partial_j (f * \omega_n) = (\partial_j f) * \omega_n$ is true for $f \in \Wexp M$. Indeed, the sequence $f*\omega_n$ belongs to $C^\infty(\reals^n) \cap \Lexp M$ and converges to $f$ in $\Lexp M$-norm according from Prop. \ref{prop:molli}. The sequence $\partial_j f*\omega_n = (\partial_j f)*\omega_n$ converges to $\partial_j f$ in $\Lexp M$-norm because of the same theorem.
  \item Same proof.
  \item Note that $fg, \partial_j f g + f \partial_j g \in L^1(M)$. The following converge in $L^1(M)$ holds
    \begin{equation*}
    \partial_j f g + f \partial_j g = \lim_{n\to\infty} \partial_j f*\omega_n g*\omega_n + f*\omega_n \partial_j *\omega_n = \lim_{n\to\infty} \partial_j f*\omega_n g*\omega_n \ , 
    \end{equation*}
so that  for all $\phi \in C^\infty_0(\reals^n)$
\begin{multline*}
\scalarof {\partial_j f g + f \partial_j g} \phi = \lim_{n\to\infty} \scalarof {\partial_j f*\omega_n g*\omega_n} \phi = \\ \lim_{n\to\infty} - \scalarof {f*\omega_ng*\omega_n}{\partial_j \phi} = - \scalarof {fg}{\partial_j \phi} \ .  
\end{multline*}
It follows that the distributional partial derivative of the product is $\partial_jfg = \partial_j f g + f \partial_j g$, in particular belongs to $L^1(M)$, hence $fg \in W^{1,1}(M)$.
\item From the assumption on $F$ we have $\absoluteval {F(U)} \le \absoluteval{F(0)} + \normat \infty {F'} \absoluteval U$. It follows $F\circ U \in \Lexp M$ because
    \begin{multline*}
      \int (\cosh-1)\left(\rho F(U(x))\right) \ M(x)dx \le \\ \frac12 (\cosh-1)(2\rho F(0)) + \frac12 \int (\cosh-1)\left(2 \rho \normat \infty {F'} U(x))\right) \ M(x)dx \ ,
    \end{multline*}
and $\rho \normat {\Lexp M} {F(U)} \le 1$ if both
\begin{equation*}
  (\cosh-1)(2\rho F(0)) \le 1, \quad  2 \rho \normat \infty {F'} \normat {\Lexp M} U \le 1 \ .
\end{equation*}
In the same way we show that $F'\circ U \partial_j U \in \Lexp M$. Indeed,
\begin{multline*}
\int (\cosh-1)\left(\rho F'(U(x)) \partial_j U(x) \right) \ M(x)dx \le \int (\cosh-1)\left(\rho \normat \infty {F'} \partial_j U(x) \right) \ M(x)dx \ ,
\end{multline*}
so that $\rho \normat {\Lexp M} {F'\circ U \partial_j U} \le 1$ if $\rho \normat \infty {F'} \normat {\Lexp M} {\partial_j U(x)} = 1$.
Because of the Item \eqref{item:molliD1} the sequence $U*\omega_n$ belongs to $C^\infty$ and converges strongly in $\Lexp M$ to $U$, so that from
\begin{equation*}
  \normat {\Lexp M} {F\circ(U*\omega_n) - F\circ U} \le \normat \infty {F'} \normat {\Lexp M} {U*\omega_n-U}
\end{equation*}
we see that $F\circ (U*\omega_n) \to F\circ U$ in $\Lexp M$. In the same way,
\begin{multline*}
\normat {\Lexp M} {F'\circ(U*\omega_n) \partial_j (U*\omega_n) - F'\circ U \partial_j U} \le \\ \normat {\Lexp M}{F'\circ(U*\omega_n) (\partial_j (U*\omega_n) - \partial_j U)} \\ + \normat {\Lexp M} {(F'\circ(U\circ\omega_n) - F'\circ U) \partial_j U} \le \\
\normat \infty {F'} \normat {\Lexp M}{\partial_j (U*\omega_n) - \partial_j U} + \\ \normat {\Lexp M} {(F'\circ(U\circ\omega_n) - F'\circ U) \partial_j U} \ .
\end{multline*}
The first term goes clearly to 0, while the second term requires consideration. Note the bound
\begin{equation*}
\absoluteval {(F'\circ(U\circ\omega_n) - F'\circ U) \partial_j U} \le 2 \normat \infty {F'} \absoluteval {\partial_j U} \ ,
\end{equation*}
so that the sequence $(F'\circ(U\circ\omega_n) - F'\circ U) \partial_j U$ goes to zero in probability and is bounded by a function in $\Cexp M$. This in turn implies the convergence in $\Lexp M$.

Finally we check that  the distributional derivative of $F\circ U$ is $F'\circ U \partial_j U$: for each $\phi \in C^\infty_0(\reals^n)$
\begin{align*}
  \scalarof {\partial_j F\circ U} {\phi M} &=  - \scalarof {F\circ U} {\partial_j (\phi M)} \\
 &= - \scalarat M {F\circ U} {\delta_j \phi} \\
&= \lim_{n\to\infty} \scalarat M  {F\circ (U*\omega_n)} {\delta_j \phi} \\
&= \lim_{n\to\infty} \scalarat M  {\partial_j F\circ (U*\omega_n)} {\phi} \\
&= \lim_{n\to\infty} \scalarat M  {F'\circ (U*\omega_n) \partial_j (U*\omega_n)} {\phi} \\ &= \scalarat M  {F'\circ U \partial_j U} {\phi} \\
&= \scalarof {F'\circ U \partial_j U} {\phi M} \ .
 \end{align*}
\end{enumerate}
\end{proof}

We conclude our presentation by re-stating a technical result from \cite[Prop. 15]{MR3365132}, where the assumptions where not sufficient for the stated result.

\begin{proposition}\label{prop:feuler}\ 
  \begin{enumerate}
  \item  If $U \in \sdomainat M$ and $f_1,\dots,f_m \in \Lexp M$, then $f_1 \cdots f_m \euler^{M-K_M(M)} \in L^\gamma(M)$  for some $\gamma > 1$, hence it is in $\LlogL M$.
  \item  If $U \in \sdomainat M \cap \WCexp M$ and $f \in \WCexp M$, then
    \begin{equation*}
      f\euler^{u-K_M(u)} \in \WlogL M \cap C(\reals^n) \ ,
    \end{equation*}
and its distributional partial derivatives are $(\partial_j f + f \partial_j u)\euler^{u - K_M(u)}$ 
  \end{enumerate}
\end{proposition}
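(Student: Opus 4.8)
\emph{Proof plan.} The plan is to reduce both items to the exponential-integrability estimate of part~(1) and to the mollifier calculus of Proposition~\ref{prop:molliD}. For part~(1), recall from Proposition~\ref{prop:O-inclusions}.1 that $\Lexp M \hookrightarrow L^a(M)$ for every finite $a > 1$, so each $f_i$ belongs to all the Lebesgue spaces. Since $\sdomainat M$ is open, convex and contains $0$, if $U \in \sdomainat M$ the segment $[0,\beta U]$ lies in $\sdomainat M$ for some $\beta > 1$, whence $\expectat M {\euler^{tU}} < \infty$ for $0 \le t \le \beta$. I would then pick $\gamma > 1$ and $p_0 > 1$ with $\gamma p_0 < \beta$ (for instance $\gamma = p_0 = \beta^{1/3}$), complete $p_0$ to conjugate exponents $p_0,p_1,\dots,p_m$ — possible because $p_0^{-1} < 1$ — and apply the $(m+1)$-fold H\"older inequality:
\[
  \expectat M {\left(f_1\cdots f_m\,\euler^{U-K_M(U)}\right)^\gamma} \le \euler^{-\gamma K_M(U)}\,\expectat M {\euler^{\gamma p_0 U}}^{1/p_0}\prod_{i=1}^m \expectat M {f_i^{\gamma p_i}}^{1/p_i} < \infty ,
\]
so that $f_1\cdots f_m\,\euler^{U-K_M(U)} \in L^\gamma(M) \subset \LlogL M$, again by Proposition~\ref{prop:O-inclusions}.1.

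For part~(2), set $g = f\,\euler^{U - K_M(U)}$, where $K_M(U)$ is a finite positive constant. Part~(1) with $m=1$ gives $g \in \LlogL M$; the same estimate with $f_1 = \partial_j f$ ($m=1$) and with $f_1 = f$, $f_2 = \partial_j U$ ($m=2$) shows that $(\partial_j f)\euler^{U-K_M(U)}$ and $f(\partial_j U)\euler^{U-K_M(U)}$, hence their sum $(\partial_j f + f\,\partial_j U)\euler^{U-K_M(U)}$, lie in $\LlogL M$. Continuity of $g$ follows from Proposition~\ref{eq:embeddings}.\ref{item:embeddings3}: since $U, f \in \Wexp M$ they admit representatives which are H\"older on every $\overline\Omega_R$ and hence continuous on $\reals^n$, and $g$ is assembled from them by multiplication and composition with $\exp$. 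It therefore only remains to identify the distributional derivative of $g$.

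The crux is the claim that $\euler^U \in \WlogL M$ with $\partial_j \euler^U = \euler^U \partial_j U$. Since $\exp$ has unbounded derivative, Proposition~\ref{prop:molliD}.\ref{item:molliD4} does not apply directly, so I would truncate: let $F_k \in C^1(\reals)$ coincide with $\exp$ on $(-\infty,k]$ and be affine on $[k,\infty)$, so that $\normat \infty {F_k'} = \euler^k$, $0 < F_k(x) \le \euler^x$ and $0 \le F_k'(x) \le \euler^x$ for all $x$, while $F_k \uparrow \exp$ and $F_k' \uparrow \exp$ pointwise as $k \to \infty$. Proposition~\ref{prop:molliD}.\ref{item:molliD4} gives $F_k \circ U \in \WCexp M$ with $\partial_j (F_k\circ U) = (F_k'\circ U)\,\partial_j U$, that is $\scalarat M {(F_k'\circ U)\,\partial_j U}{\phi} = \scalarat M {F_k\circ U}{\delta_j \phi}$ for every $\phi \in \Cinfcs{\reals^n}$. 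Letting $k \to \infty$: monotone convergence (using $\euler^U \in L^1(M)$) gives $F_k\circ U \to \euler^U$ in $L^1(M)$, and dominated convergence with dominator $\euler^U\avalof{\partial_j U} \in L^1(M)$ — which is part~(1) with $m=1$ — gives $(F_k'\circ U)\,\partial_j U \to \euler^U\,\partial_j U$ in $L^1(M)$; since $\delta_j\phi$ is bounded, both pairings converge and yield $\partial_j \euler^U = \euler^U\,\partial_j U$ in the sense of distributions. Together with $\euler^U, \euler^U\,\partial_j U \in \LlogL M$ this gives $\euler^U \in \WlogL M$. Applying the product rule Proposition~\ref{prop:molliD}.\ref{item:molliD3} to $f \in \WCexp M$ and $\euler^U \in \WlogL M$ yields $f\euler^U \in W^{1,1}(M)$ with $\partial_j(f\euler^U) = (\partial_j f)\euler^U + f\euler^U\,\partial_j U$; multiplying through by the constant $\euler^{-K_M(U)}$ gives $\partial_j g = (\partial_j f + f\,\partial_j U)\euler^{U-K_M(U)}$, which has already been placed in $\LlogL M$. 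Hence $g \in \WlogL M \cap C(\reals^n)$, as claimed.

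The only delicate point I anticipate is this truncation limit: one has to be sure the distributional identity $\partial_j(F_k\circ U) = (F_k'\circ U)\,\partial_j U$ survives the passage $k \to \infty$, and this is precisely where the exponential integrability of part~(1) is used, both to dominate $(F_k'\circ U)\,\partial_j U$ and to force $F_k\circ U \to \euler^U$ in $L^1(M)$. Everything else is H\"older bookkeeping together with the mollifier calculus already in hand.
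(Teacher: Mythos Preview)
Your proof is correct and follows essentially the same strategy as the paper: a H\"older/Fenchel--Young splitting for part~(1), and for part~(2) the product rule of Proposition~\ref{prop:molliD}.\ref{item:molliD3} together with a truncation of $\exp$ so that Proposition~\ref{prop:molliD}.\ref{item:molliD4} applies, followed by passage to the limit. The only differences are tactical---you apply H\"older directly to the $\gamma$-th power (which actually delivers the $L^\gamma$ claim more transparently than the paper's Fenchel--Young detour through $(\cosh-1)_*$), and your affine truncation $F_k$ replaces the paper's cutoff $F_n(x)=\chi(x/n)\euler^x$, but the limiting argument is the same dominated-convergence step using part~(1).
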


\begin{proof} 
\begin{enumerate}
\item
From We know that $\euler^{U-K_M(U)} \cdot M \in \maxexp M$ and $\euler^{U-K_M(U)} \in L^{1+\varepsilon}(M)$ for some $\varepsilon > 0$. From that, let us prove that $f_1 \cdots f_m \euler^{U-K_M(U)} \in L^\gamma(M)$ for some $\gamma > 1$. According to classical (m+1)-term Fenchel-Young inequality,
  \begin{multline*}
    \absoluteval{f_1(x) \cdots f_n(x)} \euler^{U(x)-K_M(U)} \leq \\ 
\sum_{i=1}^m \frac{1}{\alpha_i}\absoluteval{f_i(x)}^{\alpha_i} + \frac{1}{\beta}\absoluteval{\euler^{U(x)-K_M(U)}}^{\beta}, \\ \alpha_1,\dots,\alpha_m,\beta > 1, \sum_{i=1}^m \frac{1}{\alpha_i}+\frac{1}{\beta}=1, x \in \reals^n.
  \end{multline*}
Since $(\cosh-1)_{*}$ is convex, we have
\begin{multline*}
  \expectat M{(\cosh-1)_{*}(\absoluteval{f_1 \cdots f_m}\euler^{U-K_M(U)})} \leq \\ \sum_{i=1}^m \frac{1}{\alpha_i}\expectat M {(\cosh-1)_{*}(\absoluteval{f_i}^{\alpha_i})} + \frac{1}{\beta} \expectat M{(\cosh-1)_{*}\left(\euler^{\beta(U-K_M(U))}\right)}.
\end{multline*}
Since $f_1, \dots, f_m \in \Lexp M \subset \cap_{\alpha > 1} L^\alpha(M)$, one has $|f_i|^{\alpha_i} \in \LlogL M$, for $i=1,\dots,m$ and all $\alpha_i > 1$, hence $\expectat M{(\cosh-1)_{*}(\absoluteval{f_i}^{\alpha_i})}  < \infty$ for $i=1,\dots,m$ and all $\alpha_i>1$. By choosing $1<\beta< 1+\varepsilon$ one has $\euler^{\beta(U(x)-K_M(U))} \in L^{\gamma}(M) \subset \LlogL M$, $\gamma = \frac{1+\varepsilon}{\beta}$, so that $\expectat M{(\cosh-1)_{*}\left(\euler^{\beta(U-K_M(U))}\right)} < \infty$. This proves that $(\cosh-1)_{*}(f_1 \cdots f_m\euler^{U - K_M(U)}) \in L^{1}(M)$, which implies $f_1 \cdots f_m \euler^{U(x)-K_M(U)} \in \LlogL M$.
\item
From the previous item we know $f \euler^{U-K_M(U)} \in \LlogL M$. For each $j=1,\dots,n$ from prop. \ref{prop:molliD}\eqref{item:molliD3} we have the distributional derivative $\partial_j (f \euler^{U}) = \partial f \euler^{U} + f \partial_j \euler^{U-K_M(U)}$ we we need to show a composite function derivation, namely $\partial_j \euler^{U-K_M(U)} = \partial_j u \euler^{U-K_M(U)}$. Let $\chi \in C_0^\infty(\real^n)$ be a cut-off equal to 1 on the ball of radius 1, zero outside the ball of radius 2, derivative bounded by 2, and for $n \in \naturals$ consider the function $x \mapsto F_n(x) = \chi(x/n)\euler^{x}$ which is $C^\infty(\reals^n)$ and whose derivative is bounded:
\begin{equation*}
  F_n'(x) = \left(\frac1n \chi'(x/n)+\chi(x/n)\right) \euler^x \le \left(\frac2n + 1\right)\euler^{2n} \ .
\end{equation*}
As Prop. \ref{prop:molliD}\eqref{item:molliD4} applies, we have $\partial_j F_n(U) = F'_n(U) \partial_j U \in \Cexp M$. Finally, for each $\phi \in C^\infty_0(\reals^n)$, 
\begin{align*}
  \scalarof {\partial_j \euler^U} \phi &= - \scalarof {\euler^U}{\partial_j \phi} \\ &= - \lim_{n\to\infty} \scalarof {F_n(U)} {\partial_j \phi} \\
&= \lim_{n\to\infty} \scalarof {\partial F_n(U)} \phi \\
&= \lim_{n\to\infty} \scalarof {(\frac1n \chi'(U/n)+\chi(U/n))\partial_j U \euler^U} \phi \\ &= \scalarof {\partial_j U \euler^U} \phi \ .
\end{align*}
\end{enumerate}
\end{proof}

\begin{remark}
\label{rem:wheref}
 As a particular case of the above proposition, we see that $U \in \sdomainat M \cap \WCexp M$ implies
 \begin{equation*}
   \euler^{U-K_{M}(U)} \in \WlogL M \qquad \text{ with } \quad \bnabla \euler^{U-K_M(U)} = \bnabla u\, \euler^{U-K_M(U)} \ .
 \end{equation*}
\end{remark}

\section{Conclusions}
\label{sec:conclusion}

In this paper we have given a self-contained expositions of the Exponential Affine Manifold on the Gaussian space. The Gaussian assumption allows to discuss topics that are not available in the general case, where the geometry of the sample space has no role.

In particular, we have focused on the action of translations on the probability densities of the manifold and on properties of their derivatives. Other related results, such as Poincar\'e-type inequalities, have been discussed.

Intended applications are those already discussed in \cite{MR3365132}, in particular Hyv\"arinen divergence and other statistical divergences involving derivatives, together with their gradient flows.

\subsubsection*{Acknowledgments}

The author acknowledges support from the \emph{de Castro Statistics Initiative}, Collegio Carlo Alberto, Moncalieri, Italy. He is a member of INdAM/GNAMPA.

\bibliographystyle{splncs03.bst}

\end{document}